\documentclass[12pt,reqno]{amsart}
\usepackage{amsmath,amsthm,amsfonts}

\newcommand{\nexteq}{\displaybreak[0]\\ &=}
\newcommand{\nnexteq}{\notag\displaybreak[0]\\ &=}

\newtheorem{thm}{Theorem}
\newtheorem{prop}{Proposition}
\newtheorem{lem}{Lemma}
\newtheorem{cor}{Corollary}

\theoremstyle{definition}
\newtheorem{rem}{Remark}
\newtheorem{exa}{Example}

\DeclareMathOperator{\tr}{tr}
\DeclareMathOperator{\diag}{diag}
\newcommand{\C}{\mathbb{C}}
\newcommand{\R}{\mathbb{R}}
\newcommand{\Q}{\mathbb{Q}}
\newcommand{\Z}{\mathbb{Z}}

\begin{document}
\title[Complex Hadamard Matrices]{Complex Hadamard Matrices \\
Contained in a Bose--Mesner Algebra}
\author{Takuya Ikuta}
\address{Kobe Gakuin University, Kobe, 650-8586, Japan}
\email{ikuta@law.kobegakuin.ac.jp}
\author{Akihiro Munemasa}
\address{Tohoku University, Sendai, 980-8579, Japan}
\email{munemasa@math.is.tohoku.ac.jp}
\thanks{The work of T.I. was supported by JSPS KAKENHI grant number 
25400215, and that of A.M. was supported by JSPS KAKENHI grant number
26400003.}

\dedicatory{Dedicated to Chris Godsil on the occasion of his 65th birthday}
\date{April 21, 2015}
\keywords{association scheme, complex Hadamard matrix, type-II matrix}
\subjclass[2010]{05E30,05B34}
\begin{abstract}
A complex Hadamard matrix is a square matrix $H$ with complex 
entries of absolute value $1$ satisfying $HH^\ast= nI$, 
where $\ast$ stands for the Hermitian transpose and $I$ is the 
identity matrix of order $n$. 
In this paper, we first determine
the image of a certain rational map
from the $d$-dimensional complex projective space to $\mathbb{C}^{d(d+1)/2}$.
Applying this result with $d=3$, we 
give constructions of complex Hadamard matrices,
and more generally, type-II matrices,
in the Bose--Mesner algebra of a certain 3-class symmetric association scheme. 
In particular, we recover the complex Hadamard matrices of order $15$ found by 
Ada Chan. 
We compute the Haagerup sets to show inequivalence of resulting type-II matrices,
and determine the Nomura algebras to show that the resulting matrices are
not decomposable into generalized tensor products.
\end{abstract}
\maketitle

\section{Introduction}
A complex Hadamard matrix is a square matrix $H$ with complex 
entries of absolute value $1$ satisfying
$HH^\ast= nI$, 
where $\ast$ stands for the Hermitian transpose and $I$ is the identity matrix of order $n$.
They are the natural generalization of real Hadamard matrices. 
Complex Hadamard matrices appear frequently in various branches of mathematics and 
quantum physics.

A type-II matrix, or an inverse orthogonal matrix,
is a square matrix $W$ with nonzero complex entries satisfying
$W{W^{(-)}}^\top=nI$, where $W^{(-)}$ denotes the entrywise inverse of $W$. 
Obviously, a complex Hadamard matrix is a
type-II matrix.
Two type-II matrices $W_1$ and $W_2$ are said to be equivalent if
there exist diagonal matrices $D, D'$ with nonzero complex diagonal entries,
and permutation matrices $T, T'$, such that
$DW_1D'=TW_2T'$ holds.

Complete classifications of complex Hadamard matrices,
and of type-II matrices are 
only available up to order $n = 5$
(see \cite{Craigen, Nomura, Haagerup}).
Although it is shown by Craigen \cite{Craigen} that there are 
uncountably many equivalence
classes of complex Hadamard matrices of order $n$ whenever $n$ is a 
composite number, some type-II matrices are more closely
related to combinatorial objects than the others. 
Szollosi \cite{Szollosi10} used
design theoretical methods to construct complex Hadamard matrices.
Strongly regular graphs were used to construct type-II matrices
in \cite{ChanGodsil,ChanHosoya}. See \cite{Sankey} for a generalization.
In this paper, we construct type-II matrices and complex Hadamard matrices
in the Bose--Mesner algebra of a certain 3-class symmetric association scheme. 
In particular, we recover the complex Hadamard matrix of order $15$ found in 
\cite{Chan}.

The method of finding complex Hadamard matrices in
the Bose--Mesner algebra of a symmetric association scheme
generalizes the classical work of Goethals and Seidel \cite{GoethalsSeidel}.
Assuming that the association scheme is symmetric, the resulting
complex Hadamard matrices are symmetric. It turns out that this assumption
enables us to consider only the real parts of the entries of a complex
Hadamard matrix, since the orthogonality can be expressed in terms of
the real parts. Extending this reduction to type-II matrices, we are
led to consider a rational map whose inverse is explicitly given in
Section~\ref{sec:2}. In Section~\ref{sec:3}, we explain why only real parts
come into play when we construct complex Hadamard matrices in the
Bose--Mesner algebra of a symmetric association scheme.
In Section~\ref{sec:4}, we consider a particular family of 3-class association
schemes. This family was found after extensive computer experimentation on the list
of 3-class association schemes up to $100$ vertices given in \cite{vanDam}.
Surprisingly, most other association schemes up to $100$ vertices,
with the exceptions of amorphic or pseudocyclic schemes, do not admit
a complex Hadamard matrix in their Bose--Mesner algebras.
In Section~\ref{sec:5}, we compute the Haagerup set
to show inequivalence of 
type-II matrices 
constructed in Section~\ref{sec:4}.
In Section~\ref{sec:6}, we show that the Nomura algebra of
each of the type-II matrices
constructed in Section~\ref{sec:4} has dimension $2$.
This implies that our matrices are not equivalent to
generalized tensor products defined in \cite{HS}.

All the computer calculations in this paper were performed by
Magma \cite{magma}.

\section{The image of a rational map}\label{sec:2}
We define a polynomial in three indeterminates $X,Y,Z$ as follows:
\[
g(X,Y,Z)=X^2+Y^2+Z^2-XYZ-4.
\]

\begin{lem}\label{lem:g}
\[
g\left(\frac{X}{Y}+\frac{Y}{X},\frac{X}{Z}+\frac{Z}{X},\frac{Z}{Y}+\frac{Y}{Z}\right)=0.
\]
\end{lem}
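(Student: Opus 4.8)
The identity to verify is purely algebraic, so the most straightforward route is direct substitution followed by simplification. Writing $a = X/Y+Y/X$, $b = X/Z+Z/X$, $c = Z/Y+Y/Z$, I want to show $a^2+b^2+c^2-abc-4=0$. First I would clear denominators: multiplying through by $X^2Y^2Z^2$ turns the claim into a polynomial identity in $X,Y,Z$, which in principle one can check by expanding both sides. But brute-force expansion of $abc\cdot X^2Y^2Z^2$ is the unpleasant part, so I would look for structure instead.

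The key observation is that $a,b,c$ are exactly the traces of powers of a well-chosen matrix, or more elementarily, that each of $a,b,c$ has the form $t+t^{-1}$. Indeed, set $u = X/Y$, $v = X/Z$, so that $a = u+u^{-1}$, $b = v+v^{-1}$, and since $Z/Y = (X/Y)(Z/X) = u v^{-1}$, we get $c = uv^{-1} + u^{-1}v$. Now the cleanest device is the classical fact that if $u = e^{i\alpha}$, $v = e^{i\beta}$ formally, then $a = 2\cos\alpha$, $b = 2\cos\beta$, $c = 2\cos(\alpha-\beta)$, and the target identity becomes the well-known trigonometric relation $\cos^2\alpha + \cos^2\beta + \cos^2(\alpha-\beta) - 2\cos\alpha\cos\beta\cos(\alpha-\beta) = 1$. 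To make this rigorous without invoking transcendental functions, I would instead verify the equivalent Laurent-polynomial identity in $u,v$: expand $a^2+b^2+c^2$ and $abc$ as Laurent polynomials in $u,v$ and check that $a^2+b^2+c^2-abc-4$ vanishes identically. This expansion is short: $a^2 = u^2+2+u^{-2}$, $b^2 = v^2+2+v^{-2}$, $c^2 = u^2v^{-2}+2+u^{-2}v^2$, while $abc = (u+u^{-1})(v+v^{-1})(uv^{-1}+u^{-1}v)$ expands into eight terms which regroup as $u^2 + u^{-2} + v^2 + v^{-2} + u^2v^{-2} + u^{-2}v^2 + 2$. Subtracting gives $6 + (\text{the six power terms}) - (2 + (\text{the same six power terms})) - 4 = 0$.

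The only genuine obstacle is bookkeeping in the expansion of $abc$: one must correctly collect the eight monomials $u^{\pm 1}v^{\pm 1}\cdot u^{\pm 1}v^{\mp 1}$ and see that the two ``middle'' terms contribute the constant $2$ while the remaining six match the power terms appearing in $a^2+b^2+c^2$. Once that grouping is checked, the cancellation is immediate. I would present the proof by introducing $u=X/Y$ and $v=X/Z$, noting $Z/Y = uv^{-1}$, reducing to the Laurent identity in $u,v$, and then exhibiting the three short expansions above so that the reader can see the terms cancel. This avoids the large expansion over $X,Y,Z$ entirely and keeps the argument to a few lines.
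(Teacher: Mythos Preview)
Your proposal is correct; the substitution $u=X/Y$, $v=X/Z$ reduces the identity to a short Laurent-polynomial computation, and your expansion of $abc$ is accurate. The paper's own proof is simply ``Straightforward'' (backed by a one-line Magma check in the appendix), so your argument is the same direct verification, just written out in detail.
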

\begin{proof}
Straightforward.
\end{proof}

\begin{lem}\label{lem:1}
In the rational function field with four indeterminates $X,Y,Z$ and $z$,
the following identity holds:
\begin{align}
ww'&=1+\frac{z^2g+(2zX-zYZ+f)f}{z(zZ-Y)(zY-Z)},
\label{c3}
\end{align}
where
\begin{align*}
f&=z^2-zX+1,\\
g&=g(X,Y,Z),\\
w&=\frac{z^2-1}{zZ-Y},\\
w'&=\frac{z^{-2}-1}{z^{-1}Z-Y}.
\end{align*}
\end{lem}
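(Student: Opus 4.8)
The plan is to reduce the asserted rational identity to a polynomial identity and then verify the latter by a direct expansion. First I would clear the negative powers of $z$ occurring in $w'$. Multiplying the numerator and denominator of $w'=\frac{z^{-2}-1}{z^{-1}Z-Y}$ by $z^2$ gives $w'=\frac{1-z^2}{zZ-z^2Y}=\frac{z^2-1}{z(zY-Z)}$, and therefore
\[
ww'=\frac{z^2-1}{zZ-Y}\cdot\frac{z^2-1}{z(zY-Z)}=\frac{(z^2-1)^2}{z(zZ-Y)(zY-Z)}.
\]

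Since $z(zZ-Y)(zY-Z)$ is a nonzero element of the rational function field, identity \eqref{c3} is equivalent to the polynomial identity
\[
(z^2-1)^2=z(zZ-Y)(zY-Z)+z^2g+(2zX-zYZ+f)f ,
\]
with $f=z^2-zX+1$ and $g=X^2+Y^2+Z^2-XYZ-4$. To expand the right-hand side I would use $z(zZ-Y)(zY-Z)=z^3YZ-z^2(Y^2+Z^2)+zYZ$ and $z^2g=z^2X^2+z^2Y^2+z^2Z^2-z^2XYZ-4z^2$, and for the last summand the grouping $2zX-zYZ+f=(z^2+1)+zX-zYZ$ together with $f=(z^2+1)-zX$, which yields
\[
(2zX-zYZ+f)f=(z^2+1)^2-z^2X^2-zYZ(z^2+1)+z^2XYZ .
\]
Adding the three expressions, every monomial involving $X$, $Y$, or $Z$ cancels (the $z^3YZ$, $z^2Y^2$, $z^2Z^2$, $zYZ$, $z^2X^2$, and $z^2XYZ$ terms each appear with opposite signs), leaving $(z^2+1)^2-4z^2=(z^2-1)^2$, which completes the verification.

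The argument has no real conceptual obstacle; the only point requiring attention is keeping track of the finitely many monomials produced in the expansion, and the grouping of $f$ and $2zX-zYZ+f$ around $z^2+1$ keeps that bookkeeping short. One could equally well carry out the expansion in full, or confirm the identity by a single symbolic computation, since it is merely an equality of polynomials of low degree.
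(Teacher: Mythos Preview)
Your argument is correct and matches the paper's approach: the paper simply declares the identity ``straightforward'' and confirms it by a symbolic computation (see the Magma code in the appendix), which is exactly the polynomial verification you carry out by hand. Your grouping of $2zX-zYZ+f$ and $f$ around $z^2+1$ is a nice way to keep the cancellation transparent.
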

\begin{proof}
Straightforward.
\end{proof}

We define a polynomial in six indeterminates 
$X_{0,1},X_{0,2},X_{0,3},X_{1,2},X_{1,3},X_{2,3}$ as follows:
\[
h(X_{0,1},X_{0,2},X_{0,3},X_{1,2},X_{1,3},X_{2,3})
=\det\begin{bmatrix}
2&X_{0,1}&X_{0,2}\\
X_{0,1}&2&X_{1,2}\\
X_{0,3}&X_{1,3}&X_{2,3}
\end{bmatrix}.
\]

\begin{lem}\label{lem:h}
In the rational function field with four indeterminates $X_0,X_1,X_2,X_3$,
set
\[
x_{i,j}=\frac{X_i}{X_j}+\frac{X_j}{X_i}\quad(0\leq i<j\leq3).
\]
Then 
$h(x_{0,1},x_{0,2},x_{0,3},x_{1,2},x_{1,3},x_{2,3})=0$.
\end{lem}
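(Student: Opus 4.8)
The plan is to realize the matrix obtained from the definition of $h$ by the substitution $X_{i,j}\mapsto x_{i,j}$ as a $3\times3$ submatrix of a symmetric $4\times4$ matrix that manifestly has rank at most $2$. Since every $3\times3$ minor of a matrix of rank at most $2$ vanishes, this gives exactly the claimed identity $h(x_{0,1},\dots,x_{2,3})=0$.

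Concretely, work in the rational function field $K=\C(X_0,X_1,X_2,X_3)$ (the ground field plays no role). Set
\[
J=\begin{bmatrix}0&1\\1&0\end{bmatrix},\qquad
v_i=\begin{bmatrix}X_i\\ X_i^{-1}\end{bmatrix}\in K^2\quad(0\le i\le 3).
\]
The key elementary observation is that $v_i^\top J v_j=X_i/X_j+X_j/X_i=x_{i,j}$ for $i\ne j$, while $v_i^\top J v_i=2$. Hence, writing $V=[\,v_0\ v_1\ v_2\ v_3\,]$ for the $2\times4$ matrix with columns $v_0,\dots,v_3$, the symmetric matrix
\[
M:=V^\top J V
\]
has $(i,j)$-entry equal to $x_{i,j}$ when $i\ne j$ and equal to $2$ when $i=j$.

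First I would record that $\rank M\le\rank V\le 2$, so every $3\times3$ minor of $M$ is zero. Then I would check that the matrix obtained from
\[
\begin{bmatrix}
2&X_{0,1}&X_{0,2}\\
X_{0,1}&2&X_{1,2}\\
X_{0,3}&X_{1,3}&X_{2,3}
\end{bmatrix}
\]
by the substitution $X_{i,j}\mapsto x_{i,j}$ is precisely the submatrix of $M$ formed by rows $0,1,3$ and columns $0,1,2$: its three rows are $(M_{0,0},M_{0,1},M_{0,2})$, $(M_{1,0},M_{1,1},M_{1,2})$ and $(M_{3,0},M_{3,1},M_{3,2})$. Consequently its determinant, which is $h(x_{0,1},\dots,x_{2,3})$, is a $3\times3$ minor of $M$ and therefore vanishes.

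I do not expect a genuine obstacle here: the only point requiring care is the index bookkeeping, namely verifying that the asymmetric-looking last row $X_{0,3},X_{1,3},X_{2,3}$ in the defining matrix of $h$ really does correspond to row $3$ of the symmetric matrix $M$, so that the determinant in question is an honest $3\times3$ minor of $M$. (One could instead expand the determinant directly and reduce to Lemma~\ref{lem:g}, but the rank argument sidesteps the computation entirely.)
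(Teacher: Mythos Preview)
Your argument is correct. The factorization $M=V^\top J V$ with $V$ a $2\times4$ matrix indeed forces $\rank M\le2$, and the index check that the substituted matrix is the minor of $M$ on rows $0,1,3$ and columns $0,1,2$ is accurate (using the symmetry $x_{i,j}=x_{j,i}$ for the third row).

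This is a genuinely different route from the paper's. The paper treats the lemma as a routine polynomial identity and verifies it by direct expansion (the appendix records a Magma script that literally evaluates the $3\times3$ determinant at the given substitutions and checks it is zero). Your rank argument replaces that computation by a structural observation: the $4\times4$ matrix $M=(x_{i,j})$ with $2$'s on the diagonal is a bilinear Gram matrix $V^\top J V$ through a $2$-dimensional space, so \emph{every} $3\times3$ minor of $M$ vanishes, not just the particular one defining $h$. This buys you a computation-free proof, an explanation of \emph{why} the identity holds, and an immediate generalization to $d+1$ indeterminates $X_0,\dots,X_d$ (all $3\times3$ minors of the corresponding $(d{+}1)\times(d{+}1)$ matrix vanish), which is in fact the content used implicitly later in the paper when Lemma~\ref{lem:3} and Theorem~\ref{thm:map} are applied for general $d$. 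The paper's brute-force verification, by contrast, is shorter to state (``Straightforward'') and requires no auxiliary construction, but gives no insight and does not visibly generalize.
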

\begin{proof}
Straightforward.
\end{proof}

For a finite set $N$ and a positive integer $k$, 
we denote by $\binom{N}{k}$ the collection of all $k$-element subsets
of $N$.

\begin{lem}\label{lem:3}
Let $N=\{0,1,\dots,d\}$, 
$N_3=\binom{N}{3}$ and
$N_4=\binom{N}{4}$.
Let $a_{i,j}$ $(0\leq i,j\leq d,\;i\neq j)$ be
complex numbers satisfying
\begin{align}
a_{i,j}&=a_{j,i}\quad(0\leq i<j\leq d),\label{aij}\\
g(a_{i,j},a_{j,k},a_{i,k})&=0\quad(\{i,j,k\}\in N_3),\label{N1}\\
h(a_{i,j},a_{i,k},a_{i,\ell},a_{j,k},a_{j,\ell},a_{k,\ell})&=0
\quad(\{i,j,k,\ell\}\in N_4).\label{N4}
\end{align}
Assume 
\begin{equation}\label{a1}
a_{i_0,i_1}\neq\pm2\quad\text{for some $i_0,i_1$ with
$0\leq i_0<i_1\leq d$.}
\end{equation}
Let $w_{i_0}$, $w_{i_1}$ be nonzero complex numbers satisfying
\begin{equation}\label{az1d}
\frac{w_{i_0}}{w_{i_1}}+\frac{w_{i_1}}{w_{i_0}}=a_{{i_0,i_1}}.
\end{equation}
Then for
complex numbers $w_i$ $(0\leq i\leq d,\;i\neq i_0,i_1)$,
the following are equivalent:
\begin{enumerate}
\item for all $i,j$ with $0\leq i,j\leq d$ and $i\neq j$,
\begin{equation}\label{az2}
\frac{w_j}{w_i}+\frac{w_i}{w_j}=a_{i,j}
\end{equation}
\item
for all $i,j$ with $0\leq i\leq d$, $i\neq i_0,i_1$,
\begin{equation}\label{azid}
w_i=
\frac{w_{i_1}^2-w_{i_0}^2}{a_{{i_1},i}w_{i_1}-a_{{i_0},i}w_{i_0}}.
\end{equation}
\end{enumerate}
Moreover, if one of the two equivalent conditions
{\rm(i), (ii)} is satisfied,
$a_{i,j}$ $(0\leq i<j\leq d)$ are all real and
\begin{equation}\label{-22}
-2<a_{i_0,i_1}<2, 
\end{equation}
then $|w_i|=|w_j|$ for $0\leq i<j\leq d$.
\end{lem}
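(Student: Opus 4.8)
The plan is to prove the equivalence of (i) and (ii) first, and then to read off the final assertion on absolute values from the same analysis. The implication (i)$\Rightarrow$(ii) is immediate: for $i\neq i_0,i_1$, reading \eqref{az2} with $j=i_0$ and with $j=i_1$ shows that $w_i$ is a common root of $t^2-a_{i_0,i}w_{i_0}t+w_{i_0}^2$ and of $t^2-a_{i_1,i}w_{i_1}t+w_{i_1}^2$, so that $(a_{i_1,i}w_{i_1}-a_{i_0,i}w_{i_0})w_i=w_{i_1}^2-w_{i_0}^2$; if the coefficient of $w_i$ were zero we would get $w_{i_0}/w_{i_1}=\pm1$, hence $a_{i_0,i_1}=\pm2$ by \eqref{az1d}, contradicting \eqref{a1}, so \eqref{azid} follows.

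For (ii)$\Rightarrow$(i), set $z=w_{i_0}/w_{i_1}\neq0$, which by \eqref{az1d} satisfies $z+z^{-1}=a_{i_0,i_1}$, equivalently $z^2-a_{i_0,i_1}z+1=0$. I would first settle \eqref{az2} for the pairs $\{i,j\}$ meeting $\{i_0,i_1\}$: the case $\{i,j\}=\{i_0,i_1\}$ is \eqref{az1d}, and for $k\neq i_0,i_1$ formula \eqref{azid} gives $w_k/w_{i_1}=(z^2-1)/(a_{i_0,k}z-a_{i_1,k})$, which is precisely the quantity $w$ of Lemma~\ref{lem:1} for $(X,Y,Z)=(a_{i_0,i_1},a_{i_1,k},a_{i_0,k})$. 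Since $z^2-a_{i_0,i_1}z+1=0$, the quantity $f$ of that lemma vanishes, and $g=g(a_{i_0,i_1},a_{i_1,k},a_{i_0,k})=0$ by \eqref{N1}, so Lemma~\ref{lem:1} collapses to the polynomial identity $(z^2-1)^2=z(zZ-Y)(zY-Z)$; a short manipulation then yields $w+w^{-1}=Y$, i.e.\ $w_k/w_{i_1}+w_{i_1}/w_k=a_{i_1,k}$, and interchanging $i_0$ with $i_1$ (so $z\mapsto z^{-1}$) gives $w_k/w_{i_0}+w_{i_0}/w_k=a_{i_0,k}$ as well. It remains to check \eqref{az2} for $i=k$, $j=\ell$ with $k,\ell\notin\{i_0,i_1\}$. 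By the relations just proved, $\rho:=w_k/w_{i_0}$ and $\sigma:=w_\ell/w_{i_0}$ are nonzero, with $\rho+\rho^{-1}=a_{i_0,k}$, $\sigma+\sigma^{-1}=a_{i_0,\ell}$, $\rho z+(\rho z)^{-1}=a_{i_1,k}$ and $\sigma z+(\sigma z)^{-1}=a_{i_1,\ell}$. Applying \eqref{N4} to the $4$-subset $\{i_0,i_1,k,\ell\}$, with $i_0,i_1$ occupying the two symmetric positions of $h$, gives
\[
\det\begin{bmatrix}2&a_{i_0,i_1}&a_{i_0,k}\\a_{i_0,i_1}&2&a_{i_1,k}\\a_{i_0,\ell}&a_{i_1,\ell}&a_{k,\ell}\end{bmatrix}=0.
\]
Substituting the parametrisation and expanding, the left-hand side becomes $(4-a_{i_0,i_1}^2)\bigl(a_{k,\ell}-\rho\sigma^{-1}-\sigma\rho^{-1}\bigr)$, and as $a_{i_0,i_1}\neq\pm2$ this forces $a_{k,\ell}=\rho\sigma^{-1}+\sigma\rho^{-1}=w_k/w_\ell+w_\ell/w_k$, completing (ii)$\Rightarrow$(i).

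For the final assertion, assume one of (i), (ii) holds, all $a_{i,j}$ are real, and $-2<a_{i_0,i_1}<2$. Writing $z=w_{i_0}/w_{i_1}=re^{\sqrt{-1}\theta}$ with $r>0$, reality of $z+z^{-1}$ forces $(r-r^{-1})\sin\theta=0$; the alternative $\sin\theta=0$ is impossible because it gives $z+z^{-1}=\pm(r+r^{-1})\notin(-2,2)$, so $r=1$ and $|w_{i_0}|=|w_{i_1}|$. For $k\neq i_0,i_1$, put $\rho=w_k/w_{i_0}$; then $\rho+\rho^{-1}=a_{i_0,k}\in\R$ and $\rho z+(\rho z)^{-1}=a_{i_1,k}\in\R$ with $|z|=1$, and the same dichotomy shows that $|\rho|\neq1$ would force both $\rho$ and $\rho z$ to be real, hence $z=(\rho z)\rho^{-1}\in\R$, hence $z=\pm1$ and $a_{i_0,i_1}=\pm2$, a contradiction. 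Thus $|\rho|=1$, so $|w_k|=|w_{i_0}|$ for every $k$ and all the $|w_i|$ coincide.

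The step I expect to be the main obstacle is the use of \eqref{N4}: one has to recognise that, once the relations for pairs meeting $\{i_0,i_1\}$ have been substituted, the single scalar identity supplied by $h$ factors exactly as $(4-a_{i_0,i_1}^2)$ times the quantity that must vanish, and one must take care to apply \eqref{N4} with $i_0$ and $i_1$ occupying the two symmetric positions of the defining matrix, so that the factor available for cancellation is $4-a_{i_0,i_1}^2$, which is nonzero by \eqref{a1}. The remaining computations --- the manipulation of the identity coming from Lemma~\ref{lem:1}, and the expansion of the $3\times3$ determinant --- are of the routine kind already invoked in the proofs of Lemmas~\ref{lem:g}--\ref{lem:h}.
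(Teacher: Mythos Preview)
Your argument is correct. For the equivalence (i)$\Leftrightarrow$(ii) you use the same ingredients as the paper --- the common-root-of-two-quadratics observation and the $h$-determinant from \eqref{N4} --- only packaged slightly differently: where the paper verifies directly (with computer assistance) that the $w_j$ defined by \eqref{azid} satisfies both quadratics, you extract this from Lemma~\ref{lem:1} via the identity $(z^2-1)^2=z(zZ-Y)(zY-Z)$; and where the paper manipulates the $3\times3$ matrix by scaling rows and columns by the $w_i$ to obtain the factor $(w_1^2-w_0^2)^2$, you instead observe linearity in $a_{k,\ell}$, read off the cofactor $4-a_{i_0,i_1}^2$, and use Lemma~\ref{lem:h} to identify the root. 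Both routes are equivalent. One small point you skate over is the a~priori nonvanishing of the denominator $a_{i_1,i}w_{i_1}-a_{i_0,i}w_{i_0}$ (and of $zY-Z$ when invoking Lemma~\ref{lem:1}); the paper checks this explicitly from \eqref{N1} and \eqref{a1}, and the same check fills your gap.

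The genuine difference is in the last part. The paper proves $|w_j|=|w_{i_0}|$ by a second appeal to Lemma~\ref{lem:1}: from $f=g=0$ one gets $ww'=1$, and from $|z|=1$ one gets $\overline{w'}=w$, hence $|w|^2=ww'=1$. Your argument is more elementary and avoids Lemma~\ref{lem:1} entirely here: once $|z|=1$ with $z\notin\mathbb{R}$, the reality of $\rho+\rho^{-1}$ and of $\rho z+(\rho z)^{-1}$ forces $|\rho|=1$, since $|\rho|\neq1$ would make both $\rho$ and $\rho z$ real and hence $z$ real. This is a nice simplification; the paper's route has the virtue of reusing the same identity throughout, but yours makes the role of the hypothesis $-2<a_{i_0,i_1}<2$ (i.e.\ $z\notin\mathbb{R}$) more transparent.
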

\begin{proof}
Without loss of generality, we may assume $i_0=0$ and $i_1=1$. Thus
(\ref{az1d}) reads
\begin{equation}\label{az1d01}
\frac{w_0}{w_1}+\frac{w_1}{w_0}=a_{0,1}.
\end{equation}
Then by (\ref{a1}) and (\ref{N1}) with $\{i,j,k\}=\{0,1,i\}$,
we find
\begin{equation}\label{a1x}
a_{0,i}^2+a_{1,i}^2-a_{0,1}a_{0,i}a_{1,i}\neq0.
\end{equation}
First we need to check that the denominator of (\ref{azid}) is nonzero.
Suppose $a_{1,i}w_1-a_{0,i}w_0=0$. If, moreover $a_{1,i}=0$, then
we have $a_{0,i}=0$, but this contradicts (\ref{a1x}).
Thus we have $a_{1,i}\neq0$, and 
\[
\frac{w_1}{w_0}=\frac{a_{0,i}}{a_{1,i}}.
\]
Then by (\ref{az1d01}), we have 
\[
\frac{a_{1,i}}{a_{0,i}}+\frac{a_{0,i}}{a_{1,i}}=a_{0,1},
\]
but again this contradicts (\ref{a1x}).
Therefore, $w_i$ is well-defined.
Moreover, we claim $w_i\neq0$. Indeed, $w_i=0$ would imply
$w_1^2=w_0^2$.
Then by (\ref{az1d01}), we have $a_{0,1}=\pm2$,
which contradicts (\ref{a1}).

Clearly, (\ref{az1d01}) implies (\ref{az2}) with $(i,j)=(0,1)$.
Suppose $j\geq2$. 
Observe that $x=w_j=(w_1^2-w_0^2)/(a_{1,j}w_1-a_{0,j}w_0)$ 
given in (\ref{azid}) is a unique common root of the
equations
\begin{align*}
x^2-w_0 a_{0,j}x+w_0^2&=0,\\
x^2-w_1 a_{1,j}x+w_1^2&=0.
\end{align*}
This can be seen by $g(a_{0,1},a_{0,j},a_{1,j})=0$.
Thus (\ref{az2}) holds if $i=0$ or $1$.
Now assume $\{i,j\}\cap\{0,1\}=\emptyset$.
By (\ref{N4}), we have
\begin{align*}
0&=w_0^2w_1^2w_iw_jh(a_{0,1},a_{0,i},a_{0,j},a_{1,i},a_{1,j},a_{i,j})
\nexteq
\det\begin{bmatrix}
2w_0^2&w_0w_1a_{0,1}&w_0w_ia_{0,i}\\
w_0w_1a_{0,1}&2w_1^2&w_1w_ia_{1,i}\\
w_0w_ja_{0,j}&w_1w_ja_{1,j}&w_iw_ja_{i,j}
\end{bmatrix}
\nexteq
\det\begin{bmatrix}
2w_0^2&w_0^2+w_1^2&w_0^2+w_i^2\\
w_0^2+w_1^2&2w_1^2&w_1^2+w_i^2\\
w_0^2+w_j^2&w_1^2+w_j^2&w_iw_ja_{i,j}
\end{bmatrix}
\nexteq
-(w_1^2-w_0^2)^2(w_iw_ja_{i,j}-w_i^2-w_j^2).
\end{align*}
Thus (\ref{az2}) holds.

Conversely, suppose that $\{w_i\}_{i=0}^d$ satisfy 
(\ref{az2}). 
Setting $(i,j)=(0,1)$ in (\ref{az2}) gives (\ref{az1d01}).
Setting $i=1$ and replacing $j$ by $i$ in (\ref{az2}) gives
\[
\frac{w_i}{w_1}+\frac{w_1}{w_i}=a_{1,i}.
\]
By (\ref{az2}), we have
\begin{align*}
a_{1,i}w_1w_i&=w_1^2+w_i^2
\nexteq
a_{0,i}w_0w_i+w_1^2-w_0^2,
\end{align*}
which gives (\ref{azid}).

Finally, assume that $a_{i,j}$ ($0\leq i<j\leq d$) are all real
and that (\ref{-22}) holds for $(i_0,i_1)=(0,1)$.
Then by (\ref{az1d}), 
${w_1}/{w_0}$ is an
imaginary number with absolute value $1$. This means
\begin{equation}\label{z01}
\left(\frac{w_1}{w_0}\right)^{-1}=\overline{
\left(\frac{w_1}{w_0}\right)}.
\end{equation}
For $j\geq2$, set
$(X,Y,Z,z)=(a_{0,1},a_{0,j},a_{1,j},{w_1}/{w_0})$ in Lemma~\ref{lem:1}.
It is easy to check that all the denominators in Lemma~\ref{lem:1}
are nonzero. Moreover,
\begin{align}
f&=0&&\text{(by (\ref{az1d})),}\label{fgw1}\\
g&=0&&\text{(by (\ref{N1})),}\label{fgw2}\\
w&=\frac{w_j}{w_0}&&\text{(by (\ref{azid})),}\label{fgw3}
\end{align}
and
\begin{align}
\overline{w'}&=
\overline{\left(
\frac{\left(\frac{w_1}{w_0}\right)^{-2}-1}{
\left(\frac{w_1}{w_0}\right)^{-1}a_{1,j}-a_{0,j}}
\right)}
\nnexteq
\frac{\left(\frac{w_1}{w_0}\right)^2-1}{
\frac{w_1}{w_0}a_{1,j}-a_{0j}}
&&\text{(by (\ref{z01}))}
\nnexteq
\frac{w_1^2-w_0^2}{w_0(a_{1,j}{w_1}-a_{0,j}{w_0})}
\nnexteq
\frac{w_j}{w_0}
&&\text{(by (\ref{azid}))}
\nnexteq
w
&&\text{(by (\ref{fgw3})).}
\label{fgw4}
\end{align}
Now
\begin{align*}
1&=ww'
&&\text{(by (\ref{c3}), (\ref{fgw1}), (\ref{fgw2}))}
\nexteq
|w|^2
&&\text{(by (\ref{fgw4}))}
\nexteq
\left|\frac{w_j}{w_0}\right|^2
&&\text{(by (\ref{fgw3})).}
\end{align*}
Therefore $|w_i|=|w_j|$ for $0\leq i<j\leq d$.
\end{proof}

In the following theorem, $\mathbb{C}^{d(d+1)/2}$
will mean the $\mathbb{C}$-vector space whose coordinates are indexed by
ordered pairs $(i,j)$ of integers with $0\leq i<j\leq d$.

\begin{thm}\label{thm:map}
Let $d,N,N_3,N_4$ be as in Lemma~{\rm \ref{lem:3}}. Define
$\phi:(\C^\times)^{d+1}\to\C^{d(d+1)/2}$ by
\[
\phi(w_0,\dots,w_d)=
\left(\frac{w_i}{w_j}+\frac{w_j}{w_i}\right)_{
0\leq i<j\leq d}.
\]
Then the image of $\phi$ coincides with the zeros of the
ideal generated by the polynomials 
\begin{align}
g(X_{i,j},X_{j,k},X_{i,k})&\quad(\{i,j,k\}\in N_3),\label{N1X}\\
h(X_{i,j},X_{i,k},X_{i,\ell},X_{j,k},X_{j,\ell},X_{k,\ell})&
\quad(\{i,j,k,\ell\}\in N_4),\label{N4X}
\end{align}
where $X_{i,j}=X_{j,i}$.
\end{thm}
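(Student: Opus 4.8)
The plan is to prove the two inclusions separately. The inclusion of the image of $\phi$ in the zero set is immediate: Lemmas~\ref{lem:h} and the identity $g(x_{i,j},x_{j,k},x_{i,k})=0$ coming from Lemma~\ref{lem:g} show that every point $\phi(w_0,\dots,w_d)$ satisfies all the defining equations \eqref{N1X} and \eqref{N4X}, since those lemmas are precisely the statements that $g$ and $h$ vanish on the relevant substitutions. So the real content is the reverse inclusion: given a tuple $(a_{i,j})_{0\le i<j\le d}$ in the zero set, I must exhibit $(w_0,\dots,w_d)\in(\C^\times)^{d+1}$ with $\phi(w_0,\dots,w_d)=(a_{i,j})$.

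First I would dispose of the degenerate case. If $a_{i,j}=\pm2$ for \emph{every} pair $i<j$, then I claim the point still lies in the image: take all $w_i$ equal in absolute value with signs chosen so that $w_i/w_j+w_j/w_i=a_{i,j}$; one must check that the sign pattern forced by $a_{i,j}=\pm2$ is consistent, which follows from the multiplicativity $\epsilon_{i,j}\epsilon_{j,k}=\epsilon_{i,k}$ encoded (after a short computation) in $g(\pm2,\pm2,\pm2)=0$. Then I may assume hypothesis \eqref{a1} holds, say with $(i_0,i_1)=(0,1)$ after relabeling, so that $a_{0,1}\neq\pm2$.

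With \eqref{a1} in force, the strategy is to \emph{construct} $w_0,w_1$ first and then read off the remaining $w_i$ from formula \eqref{azid}, invoking Lemma~\ref{lem:3} to verify that the resulting tuple satisfies \eqref{az2}, which is exactly the assertion $\phi(w_0,\dots,w_d)=(a_{i,j})$. Concretely: since $a_{0,1}\neq\pm2$, the quadratic $t^2-a_{0,1}t+1=0$ has two distinct nonzero roots that are reciprocals of one another; pick one, call it $w_1/w_0$, which determines $w_1$ once we set $w_0=1$. Then \eqref{az1d} holds, and I define $w_i$ for $i\neq0,1$ by the right-hand side of \eqref{azid}. Lemma~\ref{lem:3}, whose hypotheses \eqref{aij}, \eqref{N1}, \eqref{N4} are exactly our standing assumptions on the $a_{i,j}$, guarantees both that these $w_i$ are well-defined and nonzero, and that condition (ii) implies condition (i), i.e.\ \eqref{az2} holds for all $i\neq j$. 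That is precisely $\phi(w_0,\dots,w_d)=(a_{i,j})_{0\le i<j\le d}$, completing the reverse inclusion.

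The main obstacle is making sure the two special cases fit together and that nothing is lost in the reduction to $(i_0,i_1)=(0,1)$: one must check that the defining ideal is symmetric under the $S_{d+1}$-action permuting the indices $0,\dots,d$ (clear from the symmetry of $g$ and $h$ and of $\phi$), so that relabeling is harmless, and that the all-$\pm2$ case genuinely forces a consistent choice of signs — here the cocycle condition extracted from $g(X_{i,j},X_{j,k},X_{i,k})=0$ when all arguments are $\pm2$ is the crux, and one should also confirm that $h$ vanishes automatically on such sign patterns. Apart from that, everything reduces to citing Lemma~\ref{lem:3}; the geometric statement that these quadric and cubic relations \emph{cut out} the image exactly, with no spurious components, is really just a repackaging of the explicit rational inverse supplied by \eqref{azid}.
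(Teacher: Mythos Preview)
Your proposal is correct and follows essentially the same route as the paper: the forward inclusion via Lemmas~\ref{lem:g} and~\ref{lem:h}, the reverse inclusion split into the generic case (some $a_{i_0,i_1}\neq\pm2$, handled by Lemma~\ref{lem:3}) and the degenerate all-$\pm2$ case. The only cosmetic difference is in the degenerate case: the paper writes down the explicit preimage $\phi(2,a_{0,1},\dots,a_{0,d})$ and verifies directly using $a_{0,i}a_{0,j}a_{i,j}=8$ (which is your cocycle condition $\epsilon_{i,j}\epsilon_{j,k}=\epsilon_{i,k}$ derived from $g=0$), whereas you phrase it as choosing consistent signs---same content. Your aside about confirming that $h$ vanishes on such sign patterns is unnecessary, since you are already assuming $a$ lies in the zero set of the ideal.
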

\begin{proof}
In the polynomial ring $\C[X_{i,j}\mid 0\leq i<j\leq d]$,
let $I$ denote the ideal generated by (\ref{N1X}) and
(\ref{N4X}). 
By Lemmas \ref{lem:g} and \ref{lem:h}, the image of $\phi$
is contained in the set of zeros of $I$. 

Conversely, let $a=(a_{i,j})_{0\leq i<j\leq d}$ be a zero of $I$.
If there exists $i_0,i_1$ with $0\leq i_0<i_1\leq d$ and
$a_{i_0,i_1}\neq\pm2$, then Lemma~\ref{lem:3} implies that
$a$ is in the image of $\phi$.
Next suppose that 
$a_{i,j}\in\{\pm2\}$ for $0\leq i<j\leq d$.
Then
\[
a_{0,1}=\frac{a_{0,1}}{2}+\frac{2}{a_{0,1}}.
\]
Also,
since $g(a_{0,i},a_{0,j},a_{i,j})=0$, 
we have $a_{0,i}a_{0,j}a_{i,j}=8$. Thus
\begin{align*}
a_{i,j}&=
\frac{8}{a_{0,i}a_{0,j}}\cdot\frac{a_{0,i}^2}{4}\cdot\frac{a_{0,j}^2}{4}
\nexteq
\frac12a_{0,i}a_{0,j}
\nexteq
\frac{a_{0,i}}{a_{0,j}}+\frac{a_{0,j}}{a_{0,i}}.
\end{align*}
Therefore, $a=\phi(2,a_{0,1},\dots,a_{0,d})$ is in the image of $\phi$.
\end{proof}

The following lemma will be used in the proof of Theorem~\ref{thm:main}.

\begin{lem}\label{lem:w1w2+w3}
Let $\mathbb{Q}(X_1,X_2,X_3)$ be the rational function field with three
indeterminates $X_1,X_2,X_3$. Set $X_0=1$ and
\[
x_{i,j}=\frac{X_i}{X_j}+\frac{X_j}{X_i}\quad(0\leq i<j\leq3).
\]
Then 
\begin{align*}
&(X_1X_2X_3+1)(x_{0,1}x_{0,2}+x_{0,3}-x_{1,2})
\\&=
(X_1X_2+X_3)\left(x_{0,1}x_{0,2}x_{0,3}+2-
\frac12(x_{1,2}x_{0,3}+x_{1,3}x_{0,2}+x_{2,3}x_{0,1})\right).
\end{align*}
\end{lem}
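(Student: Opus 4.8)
The plan is to verify this identity by a direct calculation in $\mathbb{Q}(X_1,X_2,X_3)$, organized so that the algebra stays manageable. Setting $X_0=1$, the quantities $x_{i,j}$ become explicit rational functions: $x_{0,i}=X_i+X_i^{-1}$ for $i=1,2,3$, and $x_{i,j}=X_i/X_j+X_j/X_i$ for $1\leq i<j\leq3$. First I would clear denominators by multiplying the whole identity through by $X_1X_2X_3$ (or an appropriate monomial), turning both sides into honest polynomials in $X_1,X_2,X_3$; then the claim reduces to a polynomial identity that can in principle be checked by expanding both sides.

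To keep the expansion from exploding, I would exploit the structure on the right-hand side. Note that $x_{1,2}x_{0,3}+x_{1,3}x_{0,2}+x_{2,3}x_{0,1}$ is symmetric under permuting $\{1,2,3\}$, and that $x_{0,1}x_{0,2}x_{0,3}=(X_1+X_1^{-1})(X_2+X_2^{-1})(X_3+X_3^{-1})$ expands into the eight terms $X_1^{\pm1}X_2^{\pm1}X_3^{\pm1}$. A useful observation is that $X_1X_2X_3+1$ and $X_1X_2+X_3$ both divide into recognizable pieces: for instance $(X_1X_2+X_3)=X_3(X_1X_2X_3^{-1}+1)$ up to reindexing, and $x_{0,i}$-type factors pair naturally with them. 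So the strategy is to expand $x_{0,1}x_{0,2}x_{0,3}+2-\tfrac12\sum x_{i,j}x_{0,k}$ first, hoping it factors as a product involving $(X_1X_2X_3+1)/(X_1X_2+X_3)$ times something symmetric, thereby matching the left-hand side after the cross-multiplication.

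Concretely, I expect that after multiplying out, the right-hand side factor $x_{0,1}x_{0,2}x_{0,3}+2-\tfrac12(x_{1,2}x_{0,3}+x_{1,3}x_{0,2}+x_{2,3}x_{0,1})$ equals $(X_1X_2X_3+1)(X_1+X_2X_3)(X_2+X_1X_3)(X_3+X_1X_2)/(X_1^2X_2^2X_3^2)$ or some similar fully-factored form; similarly the left-hand side factor $x_{0,1}x_{0,2}+x_{0,3}-x_{1,2}$ should factor through the same building blocks. Once both sides are written as products of linear-in-each-$X_i$ factors, the identity becomes a matter of matching factors, which is routine. I would carry this out by: (1) writing each $x_{i,j}$ explicitly; (2) clearing denominators; (3) factoring each of the two parenthesized expressions; (4) comparing.

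The main obstacle will be controlling the combinatorial blow-up in step (2)–(3): the triple product and the sum each contribute on the order of a dozen monomials, and guessing the right factorization of $x_{0,1}x_{0,2}x_{0,3}+2-\tfrac12\sum x_{i,j}x_{0,k}$ is the crux — if it factors cleanly the rest is bookkeeping, but if it does not, one is left comparing two degree-$6$ (in the $X_i$ jointly) polynomials coefficient by coefficient. Given the way Lemmas~\ref{lem:g} and~\ref{lem:h} were proved (``Straightforward''), I anticipate the factorization does exist and the verification is ultimately a mechanical check, best delegated to a computer algebra system; I would simply record that the identity holds by direct computation.
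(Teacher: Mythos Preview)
Your plan is correct and matches the paper's own proof, which is simply ``Straightforward'' (with a Magma verification in the appendix). The factorizations you hope for are in fact cleaner than your guess: one checks directly that $x_{0,1}x_{0,2}+x_{0,3}-x_{1,2}=(X_1X_2+X_3)(X_1X_2X_3+1)/(X_1X_2X_3)$ and that $x_{0,1}x_{0,2}x_{0,3}+2-\tfrac12(x_{1,2}x_{0,3}+x_{1,3}x_{0,2}+x_{2,3}x_{0,1})=(X_1X_2X_3+1)^2/(X_1X_2X_3)$, whence the identity is immediate.
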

\begin{proof}
Straightforward.
\end{proof}

\section{Type-II matrices contained in a Bose--Mesner
algebra}\label{sec:3}

Throughout this section, we let $\mathcal{A}$ denote a
symmetric Bose--Mesner algebra with adjacency matrices
$A_0=I,A_1,\dots,A_d$. Let $n$ be the size of the matrices $A_i$,
and we denote by 
\[
P=(P_{i,j})_{\substack{0\leq i\leq d\\ 0\leq j\leq d}}
\]
the first eigenmatrix of $\mathcal{A}$. Then the 
adjacency matrices are expressed as
\[
A_j=\sum_{i=0}^d P_{i,j}E_i\quad(j=0,1,\dots,d),
\]
where $E_0=\frac{1}{n}J,E_1,\dots,E_d$ are the primitive idempotents
of $\mathcal{A}$.
The second eigenmatrix 
\[
Q=(Q_{i,j})_{\substack{0\leq i\leq d\\ 0\leq j\leq d}}
\]
is defined as $Q=nP^{-1}$, so that
\[
E_j=\frac{1}{n}\sum_{i=0}^d Q_{i,j}A_i\quad(j=0,1,\dots,d)
\]
holds. Since $QP=nI$ and $Q_{i,0}=P_{i,0}=1$ for $i=0,1,\dots,d$, we have
\begin{equation}\label{eq:Qrow}
\sum_{j=1}^d Q_{i,j}=n\delta_{i,0}-1.
\end{equation}

\begin{lem}\label{lem:I}
Let $w_0,w_1,\dots,w_d$ be nonzero complex numbers, and
set
\begin{equation}\label{eq:W} 
W=\sum_{j=0}^dw_j A_j\in\mathcal{A}.
\end{equation}
Set
\begin{equation}  \label{eq:0316}
\beta_k=\sum_{j=0}^d w_j P_{k,j}, \quad
\beta'_k=\sum_{j=0}^dw_j^{-1} P_{k,j} \quad (k=0,1,\ldots,d).
\end{equation}
Then the following are equivalent.
\begin{enumerate}
\item\label{I1}
$W$ is a type-II matrix,
\item\label{I2}
$\beta_k\beta'_k=n$ for all $k$ with $1\leq k\leq d$.
\end{enumerate}
\end{lem}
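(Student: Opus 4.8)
The plan is to translate the type-II condition $W{W^{(-)}}^\top = nI$ into the eigenvalue language afforded by the Bose--Mesner algebra. Since $\mathcal{A}$ is commutative and closed under transpose (the scheme being symmetric, $A_j^\top = A_j$), the matrix $W = \sum_j w_j A_j$ lies in $\mathcal{A}$, and its entrywise inverse is $W^{(-)} = \sum_j w_j^{-1} A_j$, which also lies in $\mathcal{A}$ because each $A_j$ is a $0$--$1$ matrix with disjoint supports. Hence $W^{(-)}$ is symmetric, so ${W^{(-)}}^\top = W^{(-)}$, and the product $W W^{(-)}$ again lies in $\mathcal{A}$.

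The key step is to diagonalize simultaneously. Using $A_j = \sum_{k=0}^d P_{k,j} E_k$ and the definitions in (\ref{eq:0316}), we get
\begin{align*}
W &= \sum_{k=0}^d \beta_k E_k, &
W^{(-)} &= \sum_{k=0}^d \beta'_k E_k,
\end{align*}
so that, by orthogonality of the primitive idempotents ($E_k E_\ell = \delta_{k,\ell} E_k$ and $\sum_k E_k = I$),
\[
W W^{(-)} = \sum_{k=0}^d \beta_k \beta'_k E_k.
\]
Therefore $W{W^{(-)}}^\top = nI$ holds if and only if $\beta_k \beta'_k = n$ for every $k$ with $0 \leq k \leq d$. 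Finally, the case $k=0$ is automatic: since $E_0 = \frac1n J$ and $P_{0,j}$ is the valency of $A_j$, one computes $\beta_0 = \sum_j w_j P_{0,j}$ and $\beta'_0 = \sum_j w_j^{-1} P_{0,j}$; but it is cleaner to observe directly that the $(i,i)$ entry of $W{W^{(-)}}^\top$ equals $\sum_j w_j w_j^{-1} P_{0,j} = \sum_j P_{0,j} = n$ (the row sum of $J$), so the $k=0$ equation $\beta_0\beta_0' = n$ is an identity, whatever the $w_j$. Thus the condition reduces to $\beta_k\beta'_k = n$ for $1 \leq k \leq d$, which is exactly (ii).

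I do not expect a serious obstacle here; the only point requiring a little care is making sure the entrywise inverse $W^{(-)}$ is correctly identified as $\sum_j w_j^{-1} A_j$ (this uses that the $A_j$ partition the all-ones matrix) and that transposition acts trivially, both consequences of the standing symmetry assumption on $\mathcal{A}$. Once those observations are in place, the equivalence is a direct computation in the Bose--Mesner algebra via the spectral decomposition, and the reduction from all $k$ to $k \geq 1$ follows from the triviality of the $k=0$ component noted above.
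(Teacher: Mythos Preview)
Your approach matches the paper's: diagonalize $W$ and $W^{(-)}$ via the primitive idempotents, reduce the type-II condition to $\beta_k\beta'_k=n$ for all $k$, and then argue that the $k=0$ equation is redundant. The spectral-decomposition part is fine.

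The gap is in your last step. You claim that ``the $k=0$ equation $\beta_0\beta_0'=n$ is an identity, whatever the $w_j$'', but this is false. For instance, with all $w_j=1$ one has $W=J$, $\beta_0=\beta'_0=n$, and $\beta_0\beta'_0=n^2\neq n$. What \emph{is} true unconditionally is your computation that each diagonal entry of $WW^{(-)}$ equals $n$; this does not by itself pin down the $E_0$-coefficient. The paper closes the gap by taking the trace: since the diagonal is identically $n$, $\tr(WW^{(-)})=n^2$; on the other hand $\tr(WW^{(-)})=\sum_{k}\beta_k\beta'_k\,\tr E_k=\beta_0\beta'_0+\sum_{k\ge1}\beta_k\beta'_k\,\tr E_k$. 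Assuming (ii), the last sum is $n\sum_{k\ge1}\tr E_k=n(n-1)$, forcing $\beta_0\beta'_0=n$. So the $k=0$ condition is redundant \emph{given} (ii), not unconditionally; once you insert this trace step, your argument is complete and coincides with the paper's.
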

\begin{proof}
Since
\[
W=\sum_{k=0}^d\beta_k E_k, \quad
W^{(-)}=\sum_{k=0}^d\beta'_k E_k.
\]
$W$ is type-II matrix if and only if
$\beta_k\beta'_k=n$ for $0\leq k\leq d$. 
In particular, \ref{I1} implies \ref{I2}.

To prove the converse, it suffices to show $\beta_0\beta'_0=n$.
Since $W$ is symmetric, the diagonal entries of $WW^{(-)}$ are
all $n$. Thus
\begin{align*}
n^2&=
\tr WW^{(-)}
\nexteq
\sum_{k=0}^d\beta_k\beta'_k\tr E_k
\nexteq
\beta_0\beta'_0+\sum_{k=1}^d n\tr E_k
\nexteq
\beta_0\beta'_0+n\tr (I-E_0)
\nexteq
\beta_0\beta'_0+n(n-1),
\end{align*}
and hence $\beta_0\beta'_0=n$.
\end{proof}

\begin{lem}\label{lem:I1}
Let $X_{i,j}$ $(0\leq i<j\leq d)$ be indeterminates and
let $e_k$ be the polynomial defined by
\begin{equation} \label{eq:04}
e_k=
\sum_{0\leq i<j\leq d} P_{k,i}P_{k,j}X_{i,j}
+\sum_{i=0}^d P_{k,i}^2-n
\quad(k=1,\dots,d).
\end{equation}
Let $a_{i,j}$ $(0\leq i,j\leq d,\;i\neq j)$ and
$w_i$ $(0\leq i\leq d)$ be complex numbers.
Assume that $w_i\neq0$ for all $i$ with $0\leq i\leq d$
and that {\rm(\ref{az2})} holds.
Then the following statements are equivalent:
\begin{enumerate}
\item 
the matrix $W$ given by {\rm(\ref{eq:W})}
is a type-II matrix,
\item
$(a_{i,j})_{0\leq i<j\leq d}$
is a common zero of $e_k$ $(k=1,\dots,d)$.
\end{enumerate}
Moreover, if one of the two equivalent conditions
{\rm(i), (ii)} is satisfied, 
$a_{i,j}\in\R$ for all $i,j$ with $0\leq i<j\leq d$,
and {\rm(\ref{-22})} holds for some $i_0, i_1$ with
$0\leq i_0<i_1\leq d$, then 
$W$ is a scalar multiple of a complex
Hadamard matrix.
\end{lem}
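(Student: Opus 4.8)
The plan is to compute the product $\beta_k\beta'_k$ of (\ref{eq:0316}) directly in terms of the numbers $a_{i,j}$, recognize the result as the polynomial $e_k$ of (\ref{eq:04}), and then quote Lemma~\ref{lem:I}. Since $w_i\neq0$ for all $i$,
\[
\beta_k\beta'_k=\sum_{0\le i,j\le d}\frac{w_i}{w_j}\,P_{k,i}P_{k,j}
=\sum_{i=0}^d P_{k,i}^2+\sum_{0\le i<j\le d}P_{k,i}P_{k,j}\Bigl(\frac{w_i}{w_j}+\frac{w_j}{w_i}\Bigr),
\]
where the diagonal ($i=j$) terms have been split off and, for each pair $i<j$, the two terms indexed by $(i,j)$ and $(j,i)$ have been collected. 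By (\ref{az2}) the bracketed factor equals $a_{i,j}$, so $\beta_k\beta'_k-n$ is exactly $e_k$ evaluated at $(a_{i,j})_{0\le i<j\le d}$. Hence $\beta_k\beta'_k=n$ for all $k$ with $1\le k\le d$ if and only if $(a_{i,j})$ is a common zero of $e_1,\dots,e_d$, and the equivalence of (i) and (ii) follows from Lemma~\ref{lem:I}.

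For the last assertion I would deduce it from the final clause of Lemma~\ref{lem:3}. The point is that (\ref{az2}) together with $w_i\neq0$ already forces the hypotheses of that lemma: (\ref{aij}) is immediate; Lemma~\ref{lem:g}, applied with $(X,Y,Z)=(w_i,w_j,w_k)$ and using that $g$ is symmetric in its arguments, gives (\ref{N1}) for every triple $0\le i<j<k\le d$; and Lemma~\ref{lem:h}, applied to a quadruple $0\le i<j<k<\ell\le d$ with $(X_0,X_1,X_2,X_3)=(w_i,w_j,w_k,w_\ell)$, gives (\ref{N4}). If in addition (\ref{-22}) holds for some $i_0<i_1$, then $a_{i_0,i_1}\neq\pm2$, which is (\ref{a1}), while (\ref{az1d}) is the case $(i,j)=(i_0,i_1)$ of (\ref{az2}). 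Thus Lemma~\ref{lem:3} applies; its condition (i) is precisely (\ref{az2}), so, the $a_{i,j}$ being real and (\ref{-22}) holding, its final clause yields $|w_i|=|w_j|$ for all $i,j$.

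Finally, to see that $W$ is then a scalar multiple of a complex Hadamard matrix, put $c=|w_0|=\cdots=|w_d|>0$. The type-II identity $W{W^{(-)}}^\top=nI$ is invariant under multiplication by a nonzero scalar, so $c^{-1}W$ is type-II, and every entry of $c^{-1}W$---these entries are $c^{-1}w_0,\dots,c^{-1}w_d$---has absolute value $1$. For any matrix whose entries all have absolute value $1$ the entrywise inverse coincides with the entrywise complex conjugate, so ${((c^{-1}W)^{(-)})}^\top=(c^{-1}W)^\ast$, and the type-II identity for $c^{-1}W$ becomes $(c^{-1}W)(c^{-1}W)^\ast=nI$; that is, $c^{-1}W$ is a complex Hadamard matrix.

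I do not expect any real obstacle: the computation in the first paragraph is a two-line expansion, and the rest is an assembly of results already established, the one idea worth isolating being that (\ref{az2}) by itself supplies the constraints (\ref{aij}), (\ref{N1}), (\ref{N4}) required to invoke the final clause of Lemma~\ref{lem:3}. The only mildly fiddly point is keeping the six arguments of $h$ in the order demanded by Lemma~\ref{lem:h} when substituting $(X_0,X_1,X_2,X_3)=(w_i,w_j,w_k,w_\ell)$ for a quadruple $i<j<k<\ell$; since the quadruple is ordered, they line up directly and there is nothing further to verify.
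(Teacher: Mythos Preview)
Your proposal is correct and follows essentially the same route as the paper: expand $\beta_k\beta'_k$ into the displayed sum, identify it with $e_k(a_{i,j})+n$, invoke Lemma~\ref{lem:I} for the equivalence, and then apply the final clause of Lemma~\ref{lem:3} for the ``moreover'' part. You are simply more explicit than the paper in two places---checking that (\ref{az2}) automatically supplies the hypotheses (\ref{aij}), (\ref{N1}), (\ref{N4}) via Lemmas~\ref{lem:g} and~\ref{lem:h}, and spelling out why $|w_0|=\cdots=|w_d|$ makes $W$ a scalar multiple of a complex Hadamard matrix---but these are exactly the details the paper leaves implicit.
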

\begin{proof}
Define $\beta_k,\beta'_k$ by (\ref{eq:0316}). Then
\begin{equation}
\beta_k\beta'_k
=\sum_{0\leq i<j\leq d}P_{k,i}P_{k,j} 
\left(\frac{w_{i}}{w_{j}}
+\frac{w_{j}}{w_{i}}\right)
+\sum_{i=0}^d P_{k,i}^2
\quad(k=1,\dots,d).
\label{eq:ei}
\end{equation}
Thus, equivalence of {\rm (i)} and {\rm (ii)} follows immediately from
Lemma~\ref{lem:I}.

Moreover,
suppose that 
$a_{i,j}\in\R$ for all $i,j$ with $0\leq i,j\leq d$, $i\neq j$,
and (\ref{-22}) holds for some $i_0, i_1$.
Then (\ref{a1}) holds. This, together with (\ref{az2}) enables
us to use Lemma~\ref{lem:3} to conclude that 
$w_0,w_1,\dots,w_d$ have the same absolute value.
Therefore $W$ is a scalar multiple of a complex Hadamard matrix.
\end{proof}

\section{Infinite families of complex Hadamard matrices}\label{sec:4}
Let $q\geq4$ be an integer, and $n=q^2-1$.
We consider a three-class association scheme 
$\mathcal{X}=(X,\{R_i\}_{i=0}^{3})$ with the first eigenmatrix:
\begin{equation}\label{P3}
P=\begin{bmatrix}
1&\frac{q^2}{2} - q&\frac{q^2}{2}&q - 2\\
1&\frac{q}{2}&-\frac{q}{2}&-1\\
1&-\frac{q}{2} + 1&-\frac{q}{2}&q - 2\\
1&-\frac{q}{2}&\frac{q}{2}&-1
\end{bmatrix}.
\end{equation}
For $q=2^s$ with an integer $s\geq 2$,
there exists a $3$-class
association scheme with the first eigenmatix (\ref{P3})
(see \cite[12.1.1]{BCN}).

Let $\mathcal{M}=\langle A_0,A_1,A_2,A_3\rangle$
be the Bose--Mesner algebra of $\mathcal{X}$.
Then, $\mathcal{X}$ has two non-trivial fusion schemes.
One is an imprimitive scheme $\mathcal{X}_1=(X,\{R_0, R_1\cup R_2,R_3\})$
with the first eigenmatrix:
\begin{equation} \label{P1}
P_1=\begin{bmatrix}
1&q(q-1)&q-2\\
1&0&-1\\
1&-q+1&q-2
\end{bmatrix}.
\end{equation}
Another is a primitive scheme $\mathcal{X}_2=(X,\{R_0, R_1\cup R_3,R_2\})$ with the first eigenmatrix:
\begin{equation} \label{P2}
P_2=\begin{bmatrix}
1&\frac{q^2}{2}-2&\frac{q^2}{2}\\
1&\frac{q}{2}-1&-\frac{q}{2}\\
1&-\frac{q}{2}-1&\frac{q}{2}
\end{bmatrix}.
\end{equation}

\begin{thm}\label{thm:main}
Let $w_1,w_2,w_3$ be nonzero complex numbers.
The matrix 
\begin{equation} \label{eq:0712}
W=A_0+w_1 A_1+w_2 A_2+w_3 A_3\in\mathcal{M}
\end{equation}
is a type-II matrix if and only if one of the following holds:
\begin{enumerate}
\item\label{t1}
$w_{1}=w_{2}=w_{3}$, where
\[
w_{3}+\frac{1}{w_{3}}+q^2-3=0,
\]
\item\label{t2} 
$w_3$ is as in \ref{t1}, and 
\[
w_1=w_2=\frac{-(q-3)w_3+(q-1)}{q^2-2q-1},
\]
\item\label{t3}
\[
w_1+\frac{1}{w_1}=\frac{2(q^2-6)}{q^2-4},\quad
w_2=-1,\quad
w_3=w_1,
\]
\item\label{t4}
\[
w_1=w_3=1,\quad
w_2+\frac{1}{w_2}=\frac{-2(q^2-2)}{q^2},
\]
\item\label{t5}
\[
w_1+\frac{1}{w_1}=-\frac{2}{q},\quad
w_2=\frac{1}{w_1},\quad
w_3=1,
\]
\item\label{t6}
\[
w_1+\frac{1}{w_1}=a_{0,1},
\]
and
\[
w_i=\frac{w_{1}^2-1}{a_{1,i}w_{1}-a_{0,i}}
\quad(i=2,3),
\]
where
\begin{align*}
a_{0,1}&=\frac{-(q-1)(q-2)+(q+2)r}{2q(q+1)}, \\
a_{0,2}&=\frac{(q+2)(q-1)-(q-2)r}{2q(q-3)}, \\
a_{0,3}&=\frac{5q^2-2q-19-(q-1)r}{2(q+1)(q-3)}, \\
a_{1,2}&=\frac{2(-q^4+2q^3+4q^2-10q+1+(q-1)r)}{q^2(q+1)(q-3)}, \\
a_{1,3}&=-a_{0,2}, \\
r^2&=(17q-1)(q-1).
\end{align*}
Note that $w_1w_2=-w_3$ holds.
\end{enumerate}
\end{thm}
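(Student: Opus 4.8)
The overall plan is to transfer the type-II condition on $W$ to a polynomial condition on the six quantities $a_{i,j}=w_i/w_j+w_j/w_i$ ($0\le i<j\le 3$, with $w_0:=1$), to solve the resulting system, and then to recover the admissible triples $(w_1,w_2,w_3)$ by inverting the rational map $\phi$ of Section~\ref{sec:2}. \textbf{Step 1 (reduction to a polynomial system).} With $w_0=1$, $a_{i,j}=w_i/w_j+w_j/w_i$ and $a_{j,i}:=a_{i,j}$, relation (\ref{az2}) holds by construction, so by Lemmas~\ref{lem:g} and~\ref{lem:h} the tuple $(a_{i,j})$ automatically satisfies the four relations $g(a_{i,j},a_{j,k},a_{i,k})=0$ (one for each $3$-subset $\{i,j,k\}$ of $\{0,1,2,3\}$) together with $h(a_{0,1},a_{0,2},a_{0,3},a_{1,2},a_{1,3},a_{2,3})=0$. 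Writing out the polynomials $e_1,e_2,e_3$ of (\ref{eq:04}) from the eigenmatrix (\ref{P3}), Lemma~\ref{lem:I1} says that $W$ is a type-II matrix if and only if $(a_{i,j})$ is a common zero of $e_1,e_2,e_3$. Let $I\subset\Q(q)[X_{i,j}\mid 0\le i<j\le 3]$ be the ideal generated by the four $g$-polynomials, the one $h$-polynomial, and $e_1,e_2,e_3$; then $V(I)$ is exactly the image of $\phi$ intersected with the zero set of $e_1,e_2,e_3$ (Theorem~\ref{thm:map}), so the triples $(w_1,w_2,w_3)\in(\C^\times)^3$ for which $W$ is type-II are precisely the preimages, under $(w_1,w_2,w_3)\mapsto(a_{i,j})$ (with $w_0=1$), of the points of $V(I)$.

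\textbf{Step 2 (solving the system).} Next I would compute a primary decomposition of $I$ over the function field $\Q(q)$, expecting six components, one for each of \ref{t1}--\ref{t6}. The component corresponding to \ref{t6} should involve a quadratic factor in $a_{0,1}$ whose discriminant is a square times $(17q-1)(q-1)$ — this is the source of the auxiliary quantity $r$ with $r^2=(17q-1)(q-1)$ — and on it one also records the relations $a_{1,3}=-a_{0,2}$ and $a_{0,1}a_{0,2}+a_{0,3}=a_{1,2}$. On each component the $a_{i,j}$ are explicit rational functions of $q$ alone (cases \ref{t1}, \ref{t3}, \ref{t4}, \ref{t5}), of $q$ and the single parameter $w_3+w_3^{-1}=3-q^2$ (case \ref{t2}), or of $q$ and $r$ (case \ref{t6}).

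\textbf{Step 3 (recovering $w_1,w_2,w_3$; the degenerate case).} On each component one checks that $a_{i_0,i_1}\ne\pm2$ for a suitable pair $(i_0,i_1)$ and all integers $q\ge4$ — one may take $(i_0,i_1)=(0,1)$ except in case~\ref{t4}, where $a_{0,1}=2$ and one takes $(0,2)$ — so that Theorem~\ref{thm:map} and Lemma~\ref{lem:3} describe the $\phi$-fibre through $w_0=1$: $w_{i_1}$ is a root of $t^2-a_{0,i_1}t+1$ and the remaining $w_i$ are given by the well-defined formula (\ref{azid}). Substituting the recorded $a_{i,j}$ into (\ref{azid}), clearing denominators (whose non-vanishing for $q\ge4$ is part of Lemma~\ref{lem:3}) and simplifying using the relations valid on the component should reproduce exactly the parametrizations in \ref{t1}--\ref{t6}; the identity $w_1w_2=-w_3$ of \ref{t6} follows from Lemma~\ref{lem:w1w2+w3} together with the relations $a_{1,3}=-a_{0,2}$ and $a_{0,1}a_{0,2}+a_{0,3}=a_{1,2}$ recorded above. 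The one possibility not covered by Lemma~\ref{lem:3} is $a_{i,j}\in\{\pm2\}$ for all $i,j$, i.e.\ $w_1,w_2,w_3\in\{\pm1\}$: then $W$ is real, so in the notation of Lemma~\ref{lem:I} we have $\beta_k=\beta'_k\in\tfrac12\Z$, and type-II would force $\beta_k^2=q^2-1$, which is impossible because $4(q^2-1)=(2q)^2-4$ lies strictly between $(2q-1)^2$ and $(2q)^2$ for $q\ge2$; hence this case contributes nothing. Finally, that each family \ref{t1}--\ref{t6} does give a type-II matrix is automatic from the above, or may be checked directly against the criterion $\beta_k\beta'_k=n$ of Lemma~\ref{lem:I}.

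\textbf{Main obstacle.} The heart of the argument is the primary decomposition of $I$ over $\Q(q)$ in Step~2: this is exactly where the six families, and the radical $r$ with $r^2=(17q-1)(q-1)$, emerge, and it is infeasible by hand — it is performed with Magma. The remaining work is algebraic bookkeeping in Step~3: rewriting the output of (\ref{azid}) into the forms displayed in \ref{t1}--\ref{t6}, verifying non-vanishing of the various denominators for $q\ge4$, and applying Lemma~\ref{lem:w1w2+w3} to obtain $w_1w_2=-w_3$. This is tedious but routine once the decomposition is in hand.
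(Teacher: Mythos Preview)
Your outline is correct and matches the paper's strategy: reduce via Lemma~\ref{lem:I1} to the polynomial system in the $a_{i,j}$, decompose the ideal $I$ by computer algebra, and recover the $w_i$ on each piece via Lemma~\ref{lem:3} (with Lemma~\ref{lem:w1w2+w3} giving $w_1w_2=-w_3$ in case~\ref{t6}). The only tactical difference is in Step~2. Rather than asking for a primary decomposition of $I$ over $\Q(q)$, the paper adjoins $r$ with $r^2=(17q-1)(q-1)$ from the outset, works over $K=\Q(q,r)$, and proceeds by targeted elimination: it first verifies that a product $b_1b_2b_3b_4^{+}b_4^{-}$ of five polynomials linear in $X_{1,2}$ lies in $I$, case-splits on which factor kills $a_{1,2}$, and then iterates (e.g.\ $b_1=0$ further splits via $c_1c_2\in I_1$ into cases~\ref{t1} and~\ref{t2}, and $b_3=0$ via $c_4c_5$ into~\ref{t4} and~\ref{t5}). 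This reduces every step to a single ideal-membership check and makes case~\ref{t6} split linearly over $K$, at the cost of having to guess the right eliminants; your primary decomposition over $\Q(q)$ is more automatic and keeps~\ref{t6} as one degree-two component from which $r$ is then extracted. Your separate treatment of the fully degenerate case $a_{i,j}\in\{\pm2\}$ is sound but unnecessary: such a point is not a zero of the $e_k$ (your own $\beta_k^2=q^2-1$ argument shows this), so it does not appear on any component.
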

\begin{proof}
Let $K=\mathbb{Q}(r)$, where $r$ is a square 
root of $(17q-1)(q-1)$. Note that $K$ is either $\mathbb{Q}$ or
its quadratic extension (see Remark~\ref{rem:2}). Consider the 
polynomial ring 
\[
R=K[X_{0,1},X_{0,2},X_{0,3},X_{1,2},X_{1,3},X_{2,3}].
\]
We assume that $W$ is a type-II matrix. 
For $i,j\in\{0,1,2,3\}$, define $a_{i,j}$ by (\ref{az2}), where $w_0=1$.
We write 
\[
\boldsymbol{a}=(a_{0,1},a_{0,2},a_{0,3},a_{1,2},a_{1,3},a_{2,3})
\]
for brevity.
Then by Lemmas~\ref{lem:g}, \ref{lem:h} and \ref{lem:I1}, 
$\boldsymbol{a}$ is a common zero of the polynomials
\begin{align*}
& g(X_{i,j},X_{i,k},X_{j,k})\quad(\{i,j,k\}\in\binom{\{0,1,2,3\}}{3}),\\
& h(X_{i,j},X_{i,k},X_{i,l},X_{j,k},X_{j,l},X_{k,l})
\quad(\{i,j,k,l\}=\{0,1,2,3\}), \\
& e_k\quad (k\in\{1,2,3\}).\\
\end{align*}
Let $\mathcal{I}$ be the ideal of $R$ generated by these polynomials.
Then we can verify 
that $\mathcal{I}$ contains the
polynomial $b_1b_2b_3b_4^+b_4^-$, where
\begin{align*}
b_1&=X_{1,2}-2,\\
b_2&=(q^2-4)X_{1,2}+2q^2-12,\\
b_3&=q^2X_{1,2}+2q^2-4,\\
b_4^\pm&=q^2(q+1)(q-3)X_{1,2}-2(-q^4+2q^3+4q^2-10q+1\pm (q-1)r).
\end{align*}

First assume $a_{1,2}$ is a zero of the polynomial $b_1$. 
This implies $w_1=w_2$. 
Let $\mathcal{I}_1$ denote the ideal generated by $\mathcal{I}$ and $b_1$.
Then we can verify that 
$\mathcal{I}_1$ contains the polynomials 
$X_{0,3}+q^2-3$ and $c_1c_2$, where
\begin{align*}
c_1&=X_{0,1}+q^2-3,\\
c_2&=(q^2-2q-1)X_{0,1}-(q^3-3q^2-q+7).
\end{align*}
In particular, 
\begin{equation}\label{a03}
a_{0,3}=w_3+\frac{1}{w_3}=-(q^2-3).
\end{equation}
If $a_{0,1}$ is a zero of the polynomial $c_1$, then
let $\mathcal{I}_1'$ denote the
ideal generated by $\mathcal{I}_1$ and $c_1$.
We can verify that 
$\mathcal{I}_1'$ contains the polynomial $X_{1,3}-2$.
Hence $w_1=w_2=w_3$, and we have Case~\ref{t1}.

If $a_{0,1}$ is a zero of the polynomial $c_2$, then
\begin{equation}\label{a01}
a_{0,1}=\frac{q^3-3q^2-q+7}{q^2-2q-1}.
\end{equation}
Let $\mathcal{I}_1''$ denote the
ideal generated by $\mathcal{I}_1$ and $c_2$.
We can verify that 
\[
(q^2-2q-1)X_{1,3}+(q^3-q^2-q-3)\in\mathcal{I}_1''.
\]
This implies
\begin{equation}\label{a13}
a_{1,3}=-\frac{q^3-q^2-q-3}{q^2-2q-1}\neq\pm2.
\end{equation}
Applying Lemma~\ref{lem:3} with (\ref{a03})--(\ref{a13}),
we obtain from (\ref{azid}),
\begin{align*}
w_1=w_2&=
\frac{w_3^2-1}{a_{1,3}w_3-a_{0,1}}
\nexteq
\frac{-(q-3)w_3+(q-1)}{q^2-2q-1}.
\end{align*}
This gives Case~\ref{t2}.

Next assume $a_{1,2}$ is a zero of the polynomial $b_2$. 
Let $\mathcal{I}_2$ denote the ideal generated by $\mathcal{I}$ and $b_2$.
Then we can verify that 
$\mathcal{I}_2$ contains the polynomials $X_{0,2}+2$, 
$X_{1,3}-2$, and $(q^2-4)X_{0,1}-2(q^2-6)$.
Hence $w_2=-1$, $w_1=w_3$ and
$w_1 + 1/w_1 = 2(q^2-6)/(q^2-4)$. This gives Case~\ref{t3}.

Next assume $a_{1,2}$ is a zero of the polynomial $b_3$. 
Let $\mathcal{I}_3$ denote the ideal generated by $\mathcal{I}_3$ and $b_3$.
Then we can verify that 
$\mathcal{I}_3$ contains the polynomials $X_{0,3}-2$ and $c_4c_5$, where
\begin{align*}
c_4&=q^2X_{0,2}+2(q^2-2),\\
c_5&=qX_{0,2}+2.
\end{align*}
In particular, $w_3=1$.

If $a_{0,2}$ is a zero of the polynomial $c_4$, then
\[
a_{0,2}=-\frac{2(q^2-2)}{q^2}.
\]
Let $\mathcal{I}_3'$ denote the
ideal generated by $\mathcal{I}$ and $c_4$.
We can verify that 
$\mathcal{I}_3'$ contains the polynomial $X_{0,1}-2$.
Hence $w_1=w_3=1$, $w_2+1/w_2+2(q^2-2)/q^2=0$,
and we have Case~\ref{t4}.

If $a_{0,2}$ is a zero of the polynomial $c_5$, then
\begin{equation}\label{a02}
a_{0,2}=-\frac{2}{q}.
\end{equation}
Let $\mathcal{I}_3''$ denote the
ideal generated by $\mathcal{I}_3$ and $c_5$.
We can verify that 
$\mathcal{I}_3''$ contains the polynomials $qX_{0,1}+2$
and $q^2X_{1,2}+2(q^2-2)$. Thus
\begin{align}
a_{0,1}&=-\frac{2}{q},\label{a01-5}\\
a_{1,2}&=-\frac{2(q^2-2)}{q^2}.\label{a12}
\end{align}
Since
\begin{align*}
w_1w_2+\frac{1}{w_1w_2}
&=
\left(w_1+\frac{1}{w_1}\right)
\left(w_2+\frac{1}{w_2}\right)
-\left(\frac{w_2}{w_1}+\frac{w_1}{w_2}\right)
\nexteq
a_{0,1}a_{0,2}-a_{1,2}
\nexteq
2
\end{align*}
by (\ref{a02})--(\ref{a12}), we have $w_1w_2=1$.
This gives Case~\ref{t5}.

Next assume $a_{1,2}$ is a zero of the polynomial $b_4^+b_4^-$.
Without loss of generality,
we may assume $a_{1,2}$ is a zero of the polynomial $b_4^+$.
Let $\mathcal{I}_4$ denote the ideal generated by $\mathcal{I}$ and $b_4^+$.
Then we can verify that 
$\mathcal{I}_4$ contains the following polynomials:
\begin{align}
 & 2q(q+1)X_{0,1}+(q-1)(q-2)-(q-2)r, \\
 & 2q(q-3)X_{0,2}-(q+2)(q-1)+(q-2)r, \\
 & 2(q+1)(q-3)X_{0,3}-(5q^2-2q-19)+(q-1)r, \\
 & q^2(q+1)(q-3)X_{1,2}-2(-q^4+2q^3+4q^2-10q+1+(q-1)r), \\
 & 2q(q-3)X_{1,3}+(q+2)(q-1)-(q-2)r.
\end{align}
Since $a_{1,2}\neq\pm2$, we can apply Lemma~\ref{lem:3} to obtain
Case~\ref{t6} from (\ref{azid}).
Moreover, we can verify that
$\mathcal{I}_4$ contains the polynomial 
\[X_{0,1}X_{0,2}+X_{0,3}-X_{1,2}.\]
Thus
\begin{align*}
0&=
(w_1w_2w_3+1)(a_{0,1}a_{0,2}+a_{0,3}-a_{1,2})
\nexteq
(w_1w_2+w_3)(a_{0,1}a_{0,2}a_{0,3}+2-
\frac12(a_{1,2}a_{0,3}+a_{1,3}a_{0,2}+a_{2,3}a_{0,1}))
\end{align*}
by Lemma~\ref{lem:w1w2+w3}.
Since the ideal generated by $\mathcal{I}_4$ and the polynomial
\[
X_{0,1}X_{0,2}X_{0,3}+2-
\frac12(X_{1,2}X_{0,3}+X_{1,3}X_{0,2}+X_{2,3}X_{0,1})
\]
is trivial, 
we conclude 
$w_1w_2+w_3=0$.

Conversely, assume that 
$w_1, w_2$, and $w_3$ are given in Theorem~\ref{thm:main}.
Then, we show that the matrix $W$ given in (\ref{eq:0712}) is a type-II matrix.
To do this, from Lemma~\ref{lem:I1} we only check that 
$\boldsymbol{a}$ defined by (\ref{az2})
is a zero of the polynomials (\ref{eq:04}).

First consider Case~\ref{t1}.
Then, by (\ref{az2}) we have
\[
\boldsymbol{a}=(-q^2+3,-q^2+3,-q^2+3,2,2,2),
\]
and this is a 
zero of the polynomials (\ref{eq:04}).
Hence $W$ is a type-II matrix.

Next consider Case~\ref{t2}.
This means $w_1=aw_3+b$, with
\begin{align*}
a&=-\frac{q-3}{q^2-2q-1},\\
b&=\frac{q-1}{q^2-2q-1}.
\end{align*}
We claim
\[
\frac{1}{w_1}=a\frac{1}{w_3}+b.
\]
Indeed, this can be checked by showing
\[
(aw_3+b)\left(a\frac{1}{w_3}+b\right)=1
\]
using $w_3+\frac{1}{w_3}=-(q^2-3)$.
Thus
\begin{align*}
a_{0,1}&=w_1+\frac{1}{w_1}
\\&=\frac{q^3-3q^2-q+7}{q^2-2q-1},\\
a_{1,3}&=\frac{w_1}{w_3}+\frac{w_3}{w_1}
\\&=\frac{aw_3+b}{w_3}+w_3\left(a\frac{1}{w_3}+b\right)
\\&=2a+b\left(w_3+\frac{1}{w_3}\right)
\\&=2a-b(q^2-3)
\\&=\frac{-q^3+q^2+q+3}{q^2-2q-1}.
\end{align*}
Now
\[
\boldsymbol{a}=(a_{0,1},a_{0,1},-(q^2-3),2,a_{1,3},a_{1,3})
\]
is a zero of the polynomials (\ref{eq:04}).
Hence $W$ is a type-II matrix.

Next consider Case~\ref{t3}.
Then, by (\ref{az2}) we have
\[
\boldsymbol{a}=\left(\frac{2(q^2-6)}{q^2-4},-2,\frac{2(q^2-6)}{q^2-4},-\frac{2(q^2-6)}{q^2-4},
2,-\frac{2(q^2-6)}{q^2-4}\right),
\]
and this is a 
zero of the polynomials (\ref{eq:04}).
Hence $W$ is a type-II matrix.

Next consider Case~\ref{t4}.
Then, by (\ref{az2}) we have
\[
\boldsymbol{a}=\left(2,\frac{-2(q^2-2)}{q^2},2,\frac{-2(q^2-2)}{q^2},2,\frac{-2(q^2-2)}{q^2}\right),
\]
and this is a 
zero of the polynomials (\ref{eq:04}).
Hence $W$ is a type-II matrix.

Next consider Case~\ref{t5}.
Then, by (\ref{az2}) we have
\begin{align*}
a_{1,2}&=a_{0,1}^2-2 \\
&=\frac{-2(q^2-2)}{q^2},
\end{align*}
and
\[
\boldsymbol{a}=\left(-\frac{2}{q},-\frac{2}{q},2,\frac{-2(q^2-2)}{q^2},-\frac{2}{q},-\frac{2}{q}\right),
\]
and this is a 
zero of the polynomials (\ref{eq:04}).
Hence $W$ is a type-II matrix.

Lastly consider Case~\ref{t6}.
Then
$a$ is a zero of the polynomials (\ref{eq:04}).
Hence $W$ is a type-II matrix.
\end{proof}

\begin{cor}  \label{cor}
Let $W$ be a type-II matrix in {\rm Theorem~\ref{thm:main}}.
Then, $W$ is a complex Hadamard matrix if and only if 
$W$ is given in \ref{t3}, \ref{t4}, \ref{t5}, or \ref{t6}
with $r=\sqrt{(17q-1)(q-1)}>0$.
\end{cor}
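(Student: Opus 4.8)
The plan is to work through the six cases of Theorem~\ref{thm:main} one at a time and decide, in each, whether all three entries $w_1,w_2,w_3$ of $W$ have absolute value $1$; since the diagonal entries of $W$ are all equal to $1$, the matrix $W$ is a complex Hadamard matrix exactly when $|w_1|=|w_2|=|w_3|=1$. The elementary fact I will use repeatedly is that, for a nonzero complex number $u$, one has $u+u^{-1}\in(-2,2)$ if and only if $|u|=1$ and $u\neq\pm1$, whereas if $u+u^{-1}$ is real with $|u+u^{-1}|>2$ then $u\in\R$ and $|u|\neq1$ (the two roots of $x^2-(u+u^{-1})x+1=0$ have product $1$ and are real but distinct).

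For the cases that do not produce complex Hadamard matrices, namely \ref{t1}, \ref{t2}, and \ref{t6} with $r<0$, I will exhibit a single entry of $W$ whose absolute value is not $1$. In \ref{t1} and \ref{t2} one has $w_3+w_3^{-1}=-(q^2-3)\le-13<-2$ for $q\ge4$, so $|w_3|\neq1$. In \ref{t6} with $r=-\sqrt{(17q-1)(q-1)}$ I will show $a_{0,1}<-2$, whence $|w_1|\neq1$: after clearing denominators, $a_{0,1}+2<0$ is equivalent to $3q^2+7q-2<(q+2)\sqrt{(17q-1)(q-1)}$, and squaring the two positive sides reduces this to $8q(q+1)(q^2-5)>0$, which holds for $q\ge4$.

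For the remaining cases \ref{t3}, \ref{t4}, \ref{t5}, and \ref{t6} with $r>0$, I will appeal to the last assertion of Lemma~\ref{lem:I1}: since $W$ is type-II, it suffices to check that all the numbers $a_{i,j}$ $(0\le i<j\le3)$ attached to $W$ via (\ref{az2}) are real and that $-2<a_{i_0,i_1}<2$ for at least one pair $(i_0,i_1)$. Reality is immediate in \ref{t3}, \ref{t4}, \ref{t5}, where the $a_{i,j}$ computed in the proof of Theorem~\ref{thm:main} are rational functions of $q$, and in \ref{t6} with $r>0$, where $(17q-1)(q-1)>0$ forces $r$, and hence all six $a_{i,j}$ (the five displayed ones together with $a_{2,3}=-a_{0,1}$, using $w_1w_2=-w_3$), to be real. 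For the strict inequality I take $(i_0,i_1)=(0,1)$ in \ref{t3} and \ref{t5}, using $a_{0,1}=2-\frac{4}{q^2-4}\in(-2,2)$ and $a_{0,1}=-\frac{2}{q}\in(-2,2)$ respectively; $(i_0,i_1)=(0,2)$ in \ref{t4}, using $a_{0,2}=-2+\frac{4}{q^2}\in(-2,2)$; and $(i_0,i_1)=(0,1)$ in \ref{t6} with $r>0$, where $a_{0,1}+2>0$ is obvious and $2-a_{0,1}>0$ follows, after squaring, from $(5q^2+q+2)^2-(q+2)^2(17q-1)(q-1)=8q(q+1)(q-3)^2>0$. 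Lemma~\ref{lem:I1} then gives that $W$ is a scalar multiple of a complex Hadamard matrix, and since the diagonal of $W$ consists of $1$'s this scalar has absolute value $1$, so $W$ is itself a complex Hadamard matrix. Assembling the two lists: a complex Hadamard matrix among the type-II matrices of Theorem~\ref{thm:main} cannot arise from \ref{t1}, \ref{t2}, or \ref{t6} with $r<0$, so it arises from \ref{t3}, \ref{t4}, \ref{t5}, or \ref{t6} with $r>0$; conversely, each of those four families consists entirely of complex Hadamard matrices.

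The only genuinely computational part is the sign analysis in case \ref{t6}: establishing $-2<a_{0,1}<2$ when $r>0$ and $a_{0,1}<-2$ when $r<0$, uniformly in $q\ge4$. After squaring away the radical both reduce to the quartic identities $(5q^2+q+2)^2-(q+2)^2(17q-1)(q-1)=8q(q+1)(q-3)^2$ and $(q+2)^2(17q-1)(q-1)-(3q^2+7q-2)^2=8q(q+1)(q^2-5)$, so the main obstacle is simply to spot these factorizations; everything else is direct substitution into $a_{i,j}=w_i/w_j+w_j/w_i$ together with the elementary estimates above.
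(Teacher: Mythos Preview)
Your proof is correct and follows essentially the same approach as the paper's: rule out \ref{t1}, \ref{t2} via $w_3+w_3^{-1}=-(q^2-3)<-2$, rule out \ref{t6} with $r<0$ via $a_{0,1}<-2$, and confirm \ref{t3}--\ref{t5} and \ref{t6} with $r>0$ by checking the real $a_{i,j}$ satisfy $-2<a_{i_0,i_1}<2$ for a suitable pair and invoking Lemma~\ref{lem:I1}. Your squared identities $(5q^2+q+2)^2-(q+2)^2(17q-1)(q-1)=8q(q+1)(q-3)^2$ and $(q+2)^2(17q-1)(q-1)-(3q^2+7q-2)^2=8q(q+1)(q^2-5)$ are exactly the factorizations the paper uses (in rationalized form), and your extra remark that the diagonal of ones forces the scalar in Lemma~\ref{lem:I1} to have absolute value $1$ makes explicit a step the paper leaves implicit.
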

\begin{proof}
Since $q\geq 4$, we have $n\geq 15$. Hence, $|w_3|\ne 1$ for Cases \ref{t1} 
and \ref{t2}.
For Cases \ref{t3}, \ref{t4} and \ref{t5}, it is easy to see that
\[
-2<w_i+\frac{1}{w_i}< 2 \quad \text{for some} \ i\in\{1,2,3\}.
\]
Therefore, by Lemma~\ref{lem:I1} the Cases 
\ref{t3}, \ref{t4} and \ref{t5} give complex Hadamard matrices.

Lastly, we consider the Case \ref{t6}.
If $r>0$, then we have 
$0<a_{0,1}<2$.
In fact, since $r>3q$, we have
$a_{0,1}>0$.
Also,
\[
a_{0,1}-2
=\frac{-8q(q+1)(q-3)^2}{2q(q+1)(5q^2+q+2+(q+2)r)}<0.
\]
Thus, we have $0<a_{0,1}< 2$.
By Lemma~\ref{lem:I1}, 
$W$ is a complex Hadamard matrix.

If $r<0$, then
\[
a_{0,1}+2
=\frac{-8q(q+1)(q^2-5)}{2q(q+1)(3q^2+7q-2-(q+2)r)}<0.
\]
This implies $|w_1|\neq1$. Therefore, $W$ is not a complex Hadamard matrix.
\end{proof}

Chan \cite{Chan}, found three complex Hadamard matrices
on the line graph of the Petersen graph.
This is the $3$-class association scheme with the first eigenmatrix (\ref{P3}),
where $q=4$, and the three matrices can be described as 
the matrix $W$ in (\ref{eq:0712}) with $w_1,w_2,w_3$ given as follows.
\begin{align}
w_1&=1,& w_2&=\frac{-7\pm\sqrt{15}i}{8},& w_3&=1, \label{chan1} \\
w_1&=\frac{5\pm\sqrt{11}i}{6},& w_2&=-1,& w_3&=w_1,& \label{chan2} \\
w_1&=\frac{-1\pm\sqrt{15}i}{4},& w_2&=w_1^{-1},& w_3&=1.\label{chan3} 
\end{align}
The cases (\ref{chan1}), (\ref{chan2}) and (\ref{chan3}) 
are given by \ref{t4}, \ref{t3} and \ref{t5}, respectively, of Theorem~\ref{thm:main}.
Note that (\ref{chan1}) is equivalent to the matrix $U_{15}$ in \cite{Szollosi10}.

The complex Hadamard matrix of order $15$ constructed in Theorem~\ref{thm:main}~(vi)
seems to be new. This is obtained
by setting $q=4$ and $r=\sqrt{201}$, and has coefficients $w_1,w_2,w_3
=-w_1w_2$, where
\begin{align*}
w_1+\frac{1}{w_1}&=a_{0,1},\\
a_{0,1}&=\frac{3}{20}(\sqrt{201}-1),\\
w_2&=\frac{a_{0,1}w_1-2}{a_{1,2}w_1-a_{0,2}},\\
a_{0,2}&=-\frac{1}{4}(\sqrt{201}-9),\\
a_{1,2}&=\frac{3\sqrt{201}-103}{40}.
\end{align*}
We have verified using the span condition \cite[Proposition 4.1]{Nic}
that, this matrix, as well as the one given by (\ref{chan1}) are
isolated, while the two matrices given by (\ref{chan2}) and (\ref{chan3})
do not satisfy the span condition.

\begin{rem}  \label{rem:1}
None of the complex Hadamard matrices given in Corollary~\ref{cor}
is a Butson--Hadamard matrix, that is, entries are roots of unity.
This follows from the fact that $a_{i,j}$ is not an algebraic
integer for some $i,j$. Indeed, for the Case \ref{t3},
\[
a_{0,1}=2-\frac{4}{(q-2)(q+2)}
\]
is not an integer.

As for the Case \ref{t4},
\[
a_{0,2}=-2+\frac{4}{q^2}
\]
is not an integer.

As for the Case \ref{t5},
\[
a_{0,1}=\frac{-2}{q}
\]
is not an integer.

Finally, for the Case \ref{t6}, first suppose that $r$ is irrational.
If $a_{0,1}$ is an algebraic integer, then so is
\[
a_{0,1}'=\frac{-(q-1)(q-2)-(q+2)r}{2q(q+1)}
\]
But then
\[
a_{0,1}+a_{0,1}'=-1+\frac{4q-2}{q(q+1)}
\]
is an integer, which is absurd.

Next suppose that $r$ is rational. Then $a_{0,1}$ is an integer,
so we have $a_{0,1}\in\{0,\pm1,\pm2\}$. But this does not occur.
\end{rem}

\begin{rem}  \label{rem:2}
Let $q$ be an even positive integer with $q\geq4$. Then
$\sqrt{(q-1)(17q-1)}$ is an integer if and only if
\begin{align*}
q&\in
\left\{\frac{1}{34}\left(\tr\left((2177+528\sqrt{17})^{n}(433+105\sqrt{17})\right)+18\right)\mid n\in\Z\right\}
\\&=\{
\ldots,41210,10,26,110890,482812730,\ldots\},
\end{align*}
where $\tr$ denotes the trace from $\Q(\sqrt{17})$ to
$\Q$, i.e., $\tr(a+b\sqrt{17})=2a$ for $a,b\in\Q$.
In this case, the entries of the matrix $W$ in
Theorem~\ref{thm:main}(vi) are quadratic irrationals, so there are two pairs of
algebraically conjugate matrices arising from two choices for $r$.
If $\sqrt{(17q-1)(q-1)}$ is not an integer, then the four matrices
in Theorem~\ref{thm:main}(vi) are algebraically conjugate to each other.
\end{rem}

\section{Equivalence}\label{sec:5}
For a type-II matrix $W$ of order $n$,
the Haagerup set $H(W)$ (see \cite{Haagerup}) is defined as
\[
H(W)= \left\{\frac{W_{i_1,j_1}W_{i_2,j_2}}{W_{i_1,j_2}W_{i_2,j_1}}
\Bigm| 1\leq i_1,i_2,j_1,j_2\leq n\right\}.
\]
We also define
\[
K(W)=\left\{w+\frac{1}{w} \Bigm| w\in H(W)\setminus\{1\}\right\}.
\]
If $W_1$ and $W_2$ are equivalent, then clearly $H(W_1)=H(W_2)$,
and hence $K(W_1)=K(W_2)$.
In this section, we compute the Haagerup sets of type-II matrices
constructed in Theorem~\ref{thm:main} to conclude that some of them
are inequivalent to others.

We suppose that
\[
W=\sum_{i=0}^d w_iA_i
\]
is a complex Hadamard matrix, where $A_0,\dots,A_d$ are the adjacency
matrices of a symmetric Bose--Mesner algebra of an
association scheme $(X,\{R_i\}_{i=0}^d)$, and $w_0=1$.
Let $H(W)$ be the Haagerup set of $W$. Then
\[
H(W)=\bigcup_{i=1}^4 H_i(W),
\]
where
\[
H_i(W)=\left\{
\frac{W_{x_1,y_1}W_{x_2,y_2}}{W_{x_2,y_1}W_{x_1,y_2}}\Bigm|
x_1,x_2,y_1,y_2\in X,\;
|\{x_1,x_2,y_1,y_2\}|=i\right\}
\]
for $i=1,2,3,4$.
Clearly,
\begin{align}
H_1(W)&=\{1\},\nonumber \\
H_2(W)&=\{1\}\cup\{w_i^{\pm2}\mid i=1,\ldots,d\}. \label{H2}
\end{align}
It should be remarked that, although $H(W)$ is an invariant,
none of $H_i(W)$ ($i=2,3,4$) is.

\begin{lem}\label{lem:v3}
If $|X|\geq3$, then
\[
H_3(W)=\{1\}\cup
\left\{
\left(\frac{w_iw_j}{w_k}\right)^{\pm1}
\Bigm| 1\leq i,j,k\leq d,\;p_{ij}^k>0
\right\}.
\]
\end{lem}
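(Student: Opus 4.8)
The plan is to compute $H_3(W)$ directly from the definition by analyzing which ratios $\frac{W_{x_1,y_1}W_{x_2,y_2}}{W_{x_2,y_1}W_{x_1,y_2}}$ can arise when $|\{x_1,x_2,y_1,y_2\}|=3$. Since exactly one coincidence occurs among the four points, there are essentially three cases up to the obvious symmetries of the ratio (swapping the roles of the two $x$'s together with the two $y$'s inverts the ratio, and swapping $x_1\leftrightarrow x_2$ together with $y_1\leftrightarrow y_2$ fixes it): either $x_1=x_2$, or $y_1=y_2$, or one of the $x$'s equals one of the $y$'s. In the first two cases one of the four matrix entries is $W_{x,x}=w_0=1$ and two of the remaining three entries share a row or column index; writing each $W_{x,y}=w_{i}$ where $i$ is the relation index of $(x,y)$, and using that $A_0=I$ contributes the factor $1$, the ratio collapses to $w_i/w_j$ for relation indices $i,j$ realized by a path of length two through a common vertex — but since the scheme is symmetric and has all $p_{ii}^0=k_i>0$, such $i,j$ can in fact be taken equal, giving only $1$; I would need to check this reduction carefully, but in any event these cases contribute a subset of what the third case gives. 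The genuinely new contributions come from the case $x_1=y_2$ (say), where the configuration is a path $x_2 \to x_1 \to y_1$ with $x_2\to y_1$ a chord: here the ratio becomes $\frac{W_{x_2,y_1}W_{x_1,x_1}}{W_{x_2,x_1}W_{x_1,y_1}}=\frac{w_k}{w_iw_j}$ where $(x_2,x_1)\in R_i$, $(x_1,y_1)\in R_j$, $(x_2,y_1)\in R_k$, and such a triple of vertices exists precisely when $p_{ij}^k>0$.

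First I would set up notation: for distinct $x,y\in X$ let $\partial(x,y)$ denote the unique index $i$ with $(x,y)\in R_i$, so $W_{x,y}=w_{\partial(x,y)}$ and $W_{x,x}=1$. Then I would enumerate the configurations: given four points with exactly three distinct values, exactly one of the three "merge" patterns holds. I would observe that the value of the Haagerup ratio is unchanged under the simultaneous relabeling $(x_1,x_2,y_1,y_2)\mapsto(x_2,x_1,y_2,y_1)$ and inverted under $(x_1,x_2,y_1,y_2)\mapsto(x_2,x_1,y_1,y_2)$, and use these symmetries to reduce "row-merge" ($x_1=x_2$), "column-merge" ($y_1=y_2$), and the four cross-merges ($x_1=y_1$, $x_1=y_2$, $x_2=y_1$, $x_2=y_2$) to a single representative each, namely the column-merge and the cross-merge $x_1=y_2$.

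Next, for the cross-merge representative $x_1=y_2=:v$ with $x_2=:x$, $y_1=:y$ all distinct, the ratio is $\frac{W_{x,y}W_{v,v}}{W_{v,y}W_{x,v}}=\frac{w_{\partial(x,y)}}{w_{\partial(v,y)}w_{\partial(x,v)}}$. Setting $i=\partial(x,v)$, $j=\partial(v,y)$, $k=\partial(x,y)$, the existence of such a triple with prescribed $(i,j,k)$ is by definition equivalent to $p_{ij}^k>0$; conversely every triple $(i,j,k)$ with $p_{ij}^k>0$ and $|X|\ge3$ is realized by three distinct vertices. This yields exactly $\bigl\{(w_iw_j/w_k)^{\pm1}\mid p_{ij}^k>0\bigr\}$ once both orientations of the ratio are taken into account (the $\pm1$). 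For the column-merge representative $y_1=y_2=:y$, $x_1,x_2$ distinct from each other and from $y$, the ratio is $\frac{W_{x_1,y}W_{x_2,y}}{W_{x_2,y}W_{x_1,y}}$ — wait, that is identically $1$; the correct column-merge has $x_1=x_2$ impossible here, so I would instead take the remaining pattern where two points coincide giving a repeated row or column: in the genuine row-merge $x_1=x_2=:x$ with $y_1,y_2,x$ distinct, the ratio is $\frac{W_{x,y_1}W_{x,y_2}}{W_{x,y_1}W_{x,y_2}}=1$, contributing only $\{1\}$. I would conclude $H_3(W)=\{1\}\cup\bigl\{(w_iw_j/w_k)^{\pm1}\mid 1\le i,j,k\le d,\ p_{ij}^k>0\bigr\}$ after verifying the indices $i,j,k$ are nonzero (they are, since the three vertices are pairwise distinct).

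The main obstacle is bookkeeping rather than depth: correctly enumerating the merge patterns, tracking how the dihedral symmetry of the four-index Haagerup ratio identifies them, and confirming that the three cross-merge cases other than $x_1=y_2$ produce the same set (via the inversion symmetry, which accounts for the exponent $\pm1$) while the row/column coincidence cases produce only the trivial value $1$. I would also need the standard fact that in an association scheme with $|X|\ge3$, for every $(i,j,k)$ with $p_{ij}^k>0$ one can find three pairwise-distinct vertices realizing that configuration — this is immediate since $p_{ij}^k$ counts exactly such triples and distinctness of the three vertices is forced by $i,j,k\ne 0$.
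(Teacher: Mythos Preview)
Your proposal is correct and follows essentially the same approach as the paper: enumerate the coincidence patterns among $\{x_1,x_2,y_1,y_2\}$, observe that the row/column merges $x_1=x_2$ or $y_1=y_2$ give ratio $1$, while a cross-merge such as $x_1=y_1$ (or $x_1=y_2$) produces $(w_iw_j/w_k)^{\pm1}$ with $p_{ij}^k>0$, and then argue the reverse containment by realizing any triple $(i,j,k)$ with $p_{ij}^k>0$ by three distinct vertices. The paper's proof is simply a terser version of yours; your initial plan paragraph contains some confusion (e.g., the claim that the row/column merge cases involve a diagonal entry $W_{x,x}$ is wrong---that only happens in the cross-merge cases), but you self-correct this in the detailed steps and arrive at the right argument.
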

\begin{proof}
Let
\[
w=\frac{W_{x_1,y_1}W_{x_2,y_2}}{W_{x_2,y_1}W_{x_1,y_2}}\in H_3(W),
\]
where $|\{x_1,x_2,y_1,y_2\}|=3$. If $x_1=x_2$, then $w=1$.
If $x_1=y_1$, then $w=(w_iw_j/w_k)^{-1}$, where
$(x_2,x_1)\in R_i$, $(x_1,y_2)\in R_j$, $(x_2,y_2)\in R_k$, and
$1\leq i,j,k\leq d$.
All other cases can be treated in a similar manner, 
and we conclude that
$H_3(W)$ is contained in the right-hand side. 
Arguing in the opposite way, we obtain the reverse containment.
\end{proof}

\begin{lem}\label{lem:d}
Let $\Delta$ be a subset of $\{1,\dots,d\}$. 
Suppose that there exists $i\in\{1,\dots,d\}$ such that
$p_{i_1,j_1}^i>0$ for any $i_1,j_1\in\Delta$. Then
\[
H_4(W)\supset
\left\{\frac{w_{i_1}w_{i_2}}{w_{j_1}w_{j_2}}\Bigm|
i_1,i_2,j_1,j_2\in\Delta\right\}\setminus\{1\}.
\]
In particular, if there exists $i\in\{1,\dots,d\}$ such that
$p_{i_1,j_1}^i>0$ for any $i_1,j_1\in\{1,\dots,d\}$, then
\[
H_4(W)\setminus\{1\}=
\left\{\frac{w_{i_1}w_{i_2}}{w_{j_1}w_{j_2}}\Bigm|
i_1,i_2,j_1,j_2\in\{1,\dots,d\}\right\}\setminus\{1\}.
\]
\end{lem}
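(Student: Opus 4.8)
The plan is to prove the two displayed assertions of Lemma~\ref{lem:d} by analyzing which $4$-element configurations of points in $X$ contribute to $H_4(W)$, in the spirit of the proof of Lemma~\ref{lem:v3}. Fix distinct points $x_1,x_2,y_1,y_2\in X$ and consider
\[
w=\frac{W_{x_1,y_1}W_{x_2,y_2}}{W_{x_2,y_1}W_{x_1,y_2}}.
\]
Writing $(x_1,y_1)\in R_{i_1}$, $(x_2,y_2)\in R_{i_2}$, $(x_2,y_1)\in R_{j_1}$, $(x_1,y_2)\in R_{j_2}$ (all four indices in $\{1,\dots,d\}$ since the four points are distinct), we get $w=w_{i_1}w_{i_2}/(w_{j_1}w_{j_2})$. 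So every element of $H_4(W)$ has the stated shape; the real content is the reverse containment, i.e.\ \emph{realizability}: given $i_1,i_2,j_1,j_2\in\Delta$, exhibit four distinct points realizing those relations.

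For the first (inclusion) statement, I would argue as follows. Let $i$ be the index furnished by the hypothesis, so $p^i_{i_1,j_1}>0$ for all $i_1,j_1\in\Delta$. Pick any edge $(x_1,x_2)\in R_i$; such an edge exists because $R_i$ is nonempty (as $p^i_{i_1,j_1}>0$ forces $k_i>0$). I claim one can then choose $y_1$ with $(x_1,y_1)\in R_{i_1}$ and $(x_2,y_1)\in R_{j_1}$: indeed $p^i_{i_1,j_1}>0$ is exactly the statement that the number of such $y_1$, for the fixed pair $(x_1,x_2)\in R_i$, is positive. Here I must be slightly careful about which of the standard conventions for intersection numbers is in force in the paper, i.e.\ whether $p^i_{i_1,j_1}$ counts $z$ with $(x_1,z)\in R_{i_1}$, $(z,x_2)\in R_{j_1}$ given $(x_1,x_2)\in R_i$, or the transposed version; since the scheme is symmetric this distinction is immaterial, and in either reading a suitable $y_1$ exists. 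Similarly, using $p^i_{i_2,j_2}>0$ (again valid since $i_2,j_2\in\Delta$), choose $y_2$ with $(x_2,y_2)\in R_{i_2}$ and $(x_1,y_2)\in R_{j_2}$. Now $x_1,x_2$ are distinct, $y_1\neq x_1,x_2$ and $y_2\neq x_1,x_2$ because all the indices $i_1,i_2,j_1,j_2$ lie in $\{1,\dots,d\}$, i.e.\ none of these edges is a loop. The only configuration not yet excluded is $y_1=y_2$; but if $y_1=y_2$ then the four points are not distinct and $w=w_{i_1}w_{i_2}/(w_{j_1}w_{j_2})$ would instead be forced to equal some $H_3$-type or $H_2$-type ratio. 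This is precisely why the set-difference $\setminus\{1\}$ appears, but it is not quite enough on its own — the cleaner route is: if the target value $w_{i_1}w_{i_2}/(w_{j_1}w_{j_2})\neq 1$, then it cannot also arise from a degenerate ($|\{x_1,x_2,y_1,y_2\}|\le 3$) configuration forcing $y_1=y_2$, so after discarding the value $1$ every remaining target is realized by genuinely four distinct points, giving the claimed inclusion.

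The second (equality) statement follows by combining the first with the trivial "every element has this shape" direction established at the outset: taking $\Delta=\{1,\dots,d\}$, the hypothesis gives $H_4(W)\setminus\{1\}\supseteq\{w_{i_1}w_{i_2}/(w_{j_1}w_{j_2}) : i_1,i_2,j_1,j_2\in\{1,\dots,d\}\}\setminus\{1\}$, while the forward inclusion gives the reverse, so the two sets are equal. I expect the main obstacle to be the realizability bookkeeping in the first part: one must verify that the points produced are genuinely distinct and that the value $1$ is the only thing that can slip through when $y_1=y_2$, and one must phrase the use of $p^i_{i_1,j_1}>0$ in a way that is convention-independent (the symmetry of the scheme is what makes this painless). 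Everything else — the identification of $w$ with the monomial in the $w_k$'s, and the deduction of the "in particular" clause — is immediate.
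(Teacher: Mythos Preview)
Your overall strategy coincides with the paper's: fix $(x_1,x_2)\in R_i$, use $p^i_{i_1,j_1}>0$ to produce $y_1$ with $(x_1,y_1)\in R_{i_1}$, $(x_2,y_1)\in R_{j_1}$, and $p^i_{j_2,i_2}>0$ to produce $y_2$ with $(x_1,y_2)\in R_{j_2}$, $(x_2,y_2)\in R_{i_2}$, then argue that the four points are distinct. The reverse inclusion and the ``in particular'' clause are handled correctly.

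The one genuine gap is your treatment of the possibility $y_1=y_2$. You write that in this case the ratio ``would instead be forced to equal some $H_3$-type or $H_2$-type ratio,'' and then assert without justification that if the target value is $\neq 1$ the degeneracy cannot occur. This is where the argument needs one more line, and the paper supplies it explicitly: if $y_1=y_2$, then from $(x_1,y_1)\in R_{i_1}$ and $(x_1,y_2)\in R_{j_2}$ we get $i_1=j_2$, and from $(x_2,y_1)\in R_{j_1}$ and $(x_2,y_2)\in R_{i_2}$ we get $j_1=i_2$; hence $(i_1,j_1)=(j_2,i_2)$ and the ratio $w_{i_1}w_{i_2}/(w_{j_1}w_{j_2})$ is literally $1$. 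Contrapositively, whenever $(i_1,j_1)\neq(j_2,i_2)$ --- in particular whenever the target ratio is $\neq 1$ --- any choices of $y_1,y_2$ as above are automatically distinct. Your phrase ``$H_3$-type or $H_2$-type ratio'' is misleading here: the degenerate configuration with $y_1=y_2$ gives $W_{x_1,y}W_{x_2,y}/(W_{x_2,y}W_{x_1,y})=1$ on the nose, not some unspecified element of $H_3(W)$. Once you insert this observation, your proof is complete and matches the paper's.
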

\begin{proof}
Pick $(x_1,x_2)\in R_i$ with $i\neq0$. For any 
$i_1,i_2,j_1,j_2\in\Delta$, there exists $y_1$ such that
$(x_1,y_1)\in R_{i_1}$ and $(x_2,y_1)\in R_{j_1}$. Similarly,
there exists $y_2$ such that
$(x_1,y_2)\in R_{j_2}$ and $(x_2,y_2)\in R_{i_2}$.
If $(i_1,j_1)\not=(j_2,i_2)$, then $y_1\not= y_2$.
Thus $|\{x_1,x_2,y_1,y_2\}|=4$, and hence
\begin{align*}
H_4(W)&\supset 
\left\{
\frac{w_{i_1}w_{i_2}}{w_{j_1}w_{j_2}}
\Bigm| i_1,i_2,j_1,j_2\in\Delta, (i_1,j_1)\not=(j_2,i_2)
\right\} \\
&\supset
\left\{
\frac{w_{i_1}w_{i_2}}{w_{j_1}w_{j_2}}
\Bigm| i_1,i_2,j_1,j_2\in\Delta
\right\}\setminus\{1\}.
\end{align*}
The second statement follows immediately from the first one, since
$H_4(W)\setminus\{1\}$ is contained in the right-hand side by the definition.
\end{proof}

\begin{lem}\label{lem:d-1}
Suppose that there exists $i\in\{1,\dots,d-1\}$ such that
$p_{i_1,j_1}^i>0$ for any $i_1,j_1\in\{1,\dots,d-1\}$. 
Moreover, suppose $p_{i,j}^d>0$ for any $j\in\{1,\dots,d-1\}$.
Then
\[
H_4(W)\setminus\{1\}=
\left\{\frac{w_{i_1}w_{i_2}}{w_{j_1}w_{j_2}}\Bigm|
i_1,i_2,j_1,j_2\in\{1,\dots,d\}\right\}\setminus\{1\}.
\]
\end{lem}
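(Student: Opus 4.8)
The plan is to reduce the claim to a special case of the two statements already proved in Lemma~\ref{lem:d}. Recall that Lemma~\ref{lem:d} gives two conclusions: the first guarantees that $H_4(W)$ \emph{contains} $\{w_{i_1}w_{i_2}/(w_{j_1}w_{j_2})\mid i_1,i_2,j_1,j_2\in\Delta\}\setminus\{1\}$ whenever $\Delta$ is a subset of $\{1,\dots,d\}$ all of whose pairs have a common ``product index'' $i$, and the second uses this to get equality when $\Delta=\{1,\dots,d\}$. Since in the present lemma the hypothesis is weakened (the common index $i$ only works for pairs from $\{1,\dots,d-1\}$, not all pairs), we cannot apply the second conclusion of Lemma~\ref{lem:d} directly; instead we apply its first conclusion twice, once with $\Delta=\{1,\dots,d-1\}$ and once with a set containing $d$, and combine.

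First I would apply Lemma~\ref{lem:d} with $\Delta=\{1,\dots,d-1\}$ and the given index $i\in\{1,\dots,d-1\}$, which immediately yields
\[
H_4(W)\supset
\left\{\frac{w_{i_1}w_{i_2}}{w_{j_1}w_{j_2}}\Bigm|
i_1,i_2,j_1,j_2\in\{1,\dots,d-1\}\right\}\setminus\{1\}.
\]
Next I would handle the ratios involving the index $d$. The extra hypothesis $p_{i,j}^d>0$ for all $j\in\{1,\dots,d-1\}$ is exactly what is needed to run the construction in the proof of Lemma~\ref{lem:d} with the four parameters chosen so that one of them equals $d$: pick $(x_1,x_2)\in R_i$, and for given $i_1,j_1,j_2\in\{1,\dots,d-1\}$ find $y_1$ with $(x_1,y_1)\in R_{i_1}$, $(x_2,y_1)\in R_{j_1}$ (using $p_{i_1,j_1}^i>0$), and $y_2$ with $(x_1,y_2)\in R_{j_2}$, $(x_2,y_2)\in R_{d}$ (using $p_{j_2,d}^i>0$, which follows from $p_{i,j_2}^d>0$ by the symmetry of the intersection numbers $p_{ab}^c$ of a symmetric scheme — note $p_{j_2,d}^i$ and $p_{i,j_2}^d$ are related via $k_i p^i_{j_2 d}=k_d p^d_{i j_2}$, so one is positive iff the other is). Since $d\notin\{1,\dots,d-1\}$ the pair $(i_1,j_1)$ can never equal $(j_2,d)$, so $y_1\neq y_2$ and $|\{x_1,x_2,y_1,y_2\}|=4$, giving $w_{i_1}w_d/(w_{j_1}w_{j_2})\in H_4(W)$. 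Symmetrically one gets $w_{j_1}w_{j_2}/(w_{i_1}w_d)$ and, taking $i_1=d$ as well (which needs $p_{d,d}^i>0$, again obtained from $p^d_{i d}>0$ via the symmetry, with $j_2=d$), the ratios $w_d^2/(w_{j_1}w_{j_2})$ and their inverses. Altogether this shows $H_4(W)$ contains every ratio $w_{i_1}w_{i_2}/(w_{j_1}w_{j_2})\setminus\{1\}$ in which at least one of $i_1,i_2$ (or $j_1,j_2$) lies in $\{1,\dots,d-1\}$; combined with the first step, and observing that the only remaining case $w_d^2/w_d^2=1$ is excluded anyway, we obtain
\[
H_4(W)\supset
\left\{\frac{w_{i_1}w_{i_2}}{w_{j_1}w_{j_2}}\Bigm|
i_1,i_2,j_1,j_2\in\{1,\dots,d\}\right\}\setminus\{1\}.
\]

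Finally I would note the reverse inclusion is automatic: by the definition of $H_4(W)$, any element equals $W_{x_1,y_1}W_{x_2,y_2}/(W_{x_2,y_1}W_{x_1,y_2})$ with the four points distinct, and writing each $W$-entry as the appropriate $w_k$ (with $k\neq0$ since the points are distinct) exhibits it as such a ratio. Hence equality holds. The main obstacle is the bookkeeping in the second step: one must be careful that the non-coincidence $(i_1,j_1)\neq(j_2,d)$ really does force $y_1\neq y_2$ in \emph{every} sub-case, and that the intersection-number symmetry is invoked correctly to turn the hypothesis $p_{i,j}^d>0$ into the positivity of the ``$p^i_{\bullet,d}$'' numbers actually used in the construction; I expect this to be the only delicate point, the rest being a routine reprise of the argument in Lemma~\ref{lem:d}.
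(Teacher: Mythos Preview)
Your approach is the paper's: apply Lemma~\ref{lem:d} with $\Delta=\{1,\dots,d-1\}$, then realize the remaining ratios $w_{i_1}w_d/(w_{j_1}w_{j_2})$ and $w_d^2/(w_{j_1}w_{j_2})$ (with $i_1,j_1,j_2\in\{1,\dots,d-1\}$) by choosing $(x_1,x_2)\in R_i$ and suitable $y_1,y_2$, and note the reverse inclusion is automatic from the definition of $H_4(W)$.

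There is one slip in the second step. When you set $i_1=d$ to obtain $w_d^2/(w_{j_1}w_{j_2})$, the point $y_1$ must satisfy $(x_1,y_1)\in R_d$ and $(x_2,y_1)\in R_{j_1}$; the intersection number you need is therefore $p_{d,j_1}^i>0$, \emph{not} $p_{d,d}^i>0$. The former follows from the hypothesis $p_{i,j_1}^d>0$ via $k_i p_{d,j_1}^i=k_d p_{i,j_1}^d$, exactly as you argued for $p_{j_2,d}^i$. Your attempted justification ``$p_{i,d}^d>0$ \dots\ with $j_2=d$'' is unavailable: the hypothesis only gives $p_{i,j}^d>0$ for $j\in\{1,\dots,d-1\}$, so $p_{i,d}^d>0$ is not assumed and $p_{d,d}^i>0$ need not hold. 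Fortunately it is not needed; with this correction your argument is complete and coincides with the paper's. (Note also that $y_1\neq y_2$ in this sub-case because $(x_2,y_1)\in R_{j_1}$ and $(x_2,y_2)\in R_d$ with $j_1\neq d$, so the distinctness check goes through without further work.)
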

\begin{proof}
Clearly, $H_4(W)\setminus\{1\}$ is contained in the right-hand side, so 
we show the reverse containment. 
Setting $\Delta=\{1,\dots,d-1\}$ in Lemma~\ref{lem:d}, we have
\[
H_4(W)\setminus\{1\}\supset
\left\{\frac{w_{i_1}w_{i_2}}{w_{j_1}w_{j_2}}\Bigm|
i_1,i_2,j_1,j_2\in\{1,\dots,d-1\}\right\}\setminus\{1\}.
\]
It remains to show that
\[
\frac{w_{i_1}w_{d}}{w_{j_1}w_{j_2}}\in H_4(W)
\text{ and }
\frac{w_{d}^2}{w_{j_1}w_{j_2}}\in H_4(W)
\quad(\forall i_1,j_1,j_2\in\{1,\dots,d-1\}).
\]
Pick $(x_1,x_2)\in R_i$. Then by the assumption,
there exists $y_1$ such that
$(x_1,y_1)\in R_{i_1}$ and $(x_2,y_1)\in R_{j_1}$. Similarly,
there exists $y_2$ such that
$(x_1,y_2)\in R_{{j_2}}$ and $(x_2,y_2)\in R_{d}$.
Now
\[
\frac{w_{i_1}w_{d}}{w_{j_1}w_{j_2}}
=\frac{W_{x_1,y_1}W_{x_2,y_2}}{W_{x_2,y_1}W_{x_1,y_2}}\in H_4(W).
\]
Replacing $i_1$ by $d$ in the above argument gives
\[
\frac{w_{d}^2}{w_{j_1}w_{j_2}}
=\frac{W_{x_1,y_1}W_{x_2,y_2}}{W_{x_2,y_1}W_{x_1,y_2}}\in H_4(W).
\]
\end{proof}

Below, we determine the Haagerup set of the type-II matrices
given in Theorem~\ref{thm:main}.
In what follows, let
$\mathfrak{X}=(X,\{R_i\}_{i=0}^3)$ be an association scheme
with the first eigenmatrix (\ref{P3}), where $q$ is an even positive
integer with $q\geq4$.
The intersection numbers of $\mathfrak{X}$ are given by
\begin{align}
B_1&=\begin{bmatrix}
0&1&0&0 \\
\frac{q^2}{2}-q&\frac{(q-2)^2}{4}&\frac{(q-2)^2}{4}&\frac{q(q-4)}{4} \\
0&\frac{q(q-2)}{4}&\frac{q(q-2)}{4}&\frac{q^2}{4} \\
0&\frac{q-4}{2}&\frac{q-2}{2}&0
\end{bmatrix}, \label{BB1}\displaybreak[0]\\
B_2&=\begin{bmatrix}
0&0&1&0 \\
0&\frac{q(q-2)}{4}&\frac{q(q-2)}{4}&\frac{q^2}{4} \\
\frac{q^2}{2}&\frac{q^2}{4}&\frac{q^2}{4}&\frac{q^2}{4} \\
0&\frac{q}{2}&\frac{q-2}{2}&0
\end{bmatrix}, \label{BB2}\displaybreak[0]\\
B_3&=\begin{bmatrix}
0&0&0&1 \\
0&\frac{q-4}{2}&\frac{q-2}{2}&0 \\
0&\frac{q}{2}&\frac{q-2}{2}&0 \\
q-2&0&0&q-3
\end{bmatrix}, \label{BB3}
\end{align}
where $B_h$ has $(i,j)$-entry $p_{hi}^j$ ($0\leq i,j\leq 3$).

\begin{lem} \label{lem:H(W)}
Let $W=I+\sum_{i=1}^3w_iA_i$ be a type-II matrix belonging to
the Bose--Mesner algebra of $\mathfrak{X}$. 
Then
\begin{align*}
H(W)&=\{w_i^{\pm2}\mid i=1,2,3\}
\\&\quad\bigcup
\left\{
\left(\frac{w_{i_1}w_{i_2}}{w_{i_3}}\right)^{\pm1}
\Bigm|
1\leq i_1,i_2,i_3\leq 3,\;
p_{i_2,i_3}^{i_1}>0
\right\}
\\&\quad\bigcup
\left\{\frac{w_{i_1}w_{i_2}}{w_{j_1}w_{j_2}}\Bigm|
i_1,i_2,j_1,j_2\in\{1,2,3\}\right\}.
\end{align*}
\end{lem}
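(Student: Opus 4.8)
The plan is to compute $H(W)=\bigcup_{i=1}^4 H_i(W)$ directly from the general formulas established earlier, using the specific intersection numbers of $\mathfrak{X}$ recorded in (\ref{BB1})--(\ref{BB3}). Since $W$ is a type-II matrix in the Bose--Mesner algebra of the symmetric association scheme $\mathfrak{X}$ with $w_0=1$, the decomposition $H(W)=H_1(W)\cup H_2(W)\cup H_3(W)\cup H_4(W)$ and the formulas $H_1(W)=\{1\}$, $H_2(W)=\{1\}\cup\{w_i^{\pm2}\mid i=1,2,3\}$ (see (\ref{H2})), together with Lemma~\ref{lem:v3} for $H_3(W)$, apply verbatim. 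So the first two of the three displayed sets on the right-hand side of the claimed identity are exactly $H_2(W)\setminus\{1\}$ and $H_3(W)\setminus\{1\}$ (the element $1$ is harmlessly absorbed, since $1=w_i^0$ appears already, e.g.\ as $w_{i_1}w_{i_2}/(w_{j_1}w_{j_2})$ with $i_1=j_1$), and it remains only to identify $H_4(W)$ with the third set.

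First I would dispose of $H_3$: Lemma~\ref{lem:v3} gives $H_3(W)=\{1\}\cup\{(w_{i_1}w_{i_2}/w_{i_3})^{\pm1}\mid 1\le i_1,i_2,i_3\le 3,\ p_{i_2,i_3}^{i_1}>0\}$, which is precisely the second set in the statement (after relabelling indices), so no computation is needed there beyond citing the lemma. The real work is $H_4$. Here I would invoke Lemma~\ref{lem:d-1} with $d=3$: I need to exhibit $i\in\{1,2\}$ such that $p_{i_1,j_1}^i>0$ for all $i_1,j_1\in\{1,2\}$, and such that $p_{i,j}^3>0$ for all $j\in\{1,2\}$. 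Reading off the intersection numbers from (\ref{BB1}) and (\ref{BB2}): taking $i=2$, the relevant entries $p_{1,1}^2,p_{1,2}^2,p_{2,1}^2,p_{2,2}^2$ are $\frac{q(q-2)}{4},\frac{q(q-2)}{4},\frac{q(q-2)}{4},\frac{q^2}{4}$ respectively, all positive for $q\ge4$; and $p_{2,1}^3,p_{2,2}^3$ equal $\frac{q}{2},\frac{q-2}{2}$, again positive for $q\ge4$. Hence the hypotheses of Lemma~\ref{lem:d-1} are satisfied with $i=2$, and the lemma yields $H_4(W)\setminus\{1\}=\{w_{i_1}w_{i_2}/(w_{j_1}w_{j_2})\mid i_1,i_2,j_1,j_2\in\{1,2,3\}\}\setminus\{1\}$, which is exactly the third set (up to the element $1$).

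Finally I would assemble: $H(W)=H_2(W)\cup H_3(W)\cup H_4(W)$ equals the union of the three displayed sets, the element $1$ being common to all three and in any case contained in each (it arises in the third set by choosing coinciding numerator and denominator indices), so it can be silently included or excluded. The only real obstacle is the bookkeeping check that the particular intersection numbers of $\mathfrak{X}$ do verify the hypotheses of Lemma~\ref{lem:d-1} for every even $q\ge4$ — but since those hypotheses reduce to the positivity of a handful of explicit entries of $B_2$ and $B_3$, each a product of positive linear factors in $q$, this is immediate. There is a minor subtlety in that $H_2(W)$ and $H_3(W)$ as stated in the lemmas include $1$ while the claimed right-hand side writes the first set as $\{w_i^{\pm2}\}$ without the $1$; I would note that $1$ lies in the third set anyway, so the stated union is correct as an equality of sets.
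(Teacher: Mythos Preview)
Your approach is exactly the paper's: cite (\ref{H2}) for $H_2(W)$, Lemma~\ref{lem:v3} for $H_3(W)$, and verify the hypotheses of Lemma~\ref{lem:d-1} with $i=2$ to obtain $H_4(W)\setminus\{1\}$, then assemble. One minor slip: with the paper's convention that $B_h$ has $(i,j)$-entry $p_{h,i}^j$, the actual values are $p_{1,1}^2=\frac{(q-2)^2}{4}$ (not $\frac{q(q-2)}{4}$) and $p_{2,1}^3=p_{2,2}^3=\frac{q^2}{4}$ (not $\frac{q}{2},\frac{q-2}{2}$; you read row~3 rather than column~3 of $B_2$); but all of these are still positive for $q\ge4$, so the argument is unaffected.
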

\begin{proof}
Note that
$H_3(W)$ is determined by Lemma~\ref{lem:v3}.
We can also easily see that the assumptions of
Lemma~\ref{lem:d-1} are satisfied by setting $i=2$.
This gives $H_4(W)\setminus\{1\}$. Since
$H_1(W)=\{1\}\subset H_4(W)$, we have the desired expression for $H(W)$.
\end{proof}

Using Lemma~\ref{lem:H(W)}, we can determine the Haagerup set $H(W)$
for each type-II matrix given in Theorem~\ref{thm:main}.
Note that the description of $H(W)$ in Table~\ref{tab:1}
is valid for all even $q\geq 4$, even though $p_{11}^3=0$ for $q=4$.

\begin{table}[h]
\begin{center}
\begin{tabular}{|c|l|l|}
\hline
&$H(W)\setminus\{1\}$&$K(W)$\\
\hline
\ref{t1}&$\{w_1^{\pm1},w_1^{\pm2}\}$&$-q^2+3$, $q^4-6q^2+7$\\
\hline
\ref{t2}&$\{w_1^{\pm1},w_1^{\pm2},
w_3^{\pm1},w_3^{\pm2}$,
&
$-q^2+3$, $q^4-6q^2+7$,\\
&
$(\frac{w_1^2}{w_3})^{\pm1},
(\frac{w_3}{w_1})^{\pm1},
(\frac{w_3}{w_1})^{\pm2}\}$&
$\frac{q^3-3q^2-q+7}{q^2-2q-1}, \ldots$\\
\hline
\ref{t3}&$\{-1,\pm w_1^{\pm1},\pm w_1^{\pm2}\}$
& $-2$, $\pm\frac{2(q^2-6)}{q^2-4}$, 
$\pm\frac{2(q^4-16q^2+56)}{(q^2-4)^2}$\\
\hline
\ref{t4}&$\{w_2^{\pm1},w_2^{\pm2}\}$ & 
$-\frac{2(q^2-2)}{q^2}$, $\frac{2(q^4-8q^2+8)}{q^4}$\\
\hline
\ref{t5}&$\{w_1^{\pm1},w_1^{\pm2},w_1^{\pm3},w_1^{\pm4}\}$ & 
$-\frac{2}{q}, \ldots$\\
\hline
\ref{t6}&$\{-1,\pm w_1^{\pm1},\pm w_2^{\pm1},
\pm w_1^{\pm2}$,& $-2,\frac{-(q-1)(q-2)+(q+2)r}{2q(q+1)},\ldots$ \\
&$\pm w_2^{\pm2},(w_1^{\pm1}w_2^{\pm1})^2,
\pm w_1^{\pm1}w_2^{\pm1}$,& \\
&$\pm(w_1^2w_2)^{\pm1},\pm(w_1w_2^2)^{\pm1}\}$ & 
\\
\hline
\end{tabular}
\end{center}
\caption{Haagerup sets}
\label{tab:1}
\end{table}

The elements of $H(W)$ given in Table~\ref{tab:1}
can be found as follows: \par
As for the Case \ref{t1}, $K(W)$ has two elements
\[
w_1+\frac{1}{w_1}=-q^2+3, \quad
w_1^2+\frac{1}{w_1^2}=q^4-6q^2+7.
\]
As for \ref{t2}, $K(W)$ contains
\[
w_1+\frac{1}{w_1}=\frac{q^3-3q^2-q+7}{q^2-2q-1},
\]
by (\ref{a01}).
The Cases \ref{t3} and \ref{t4} are immediate.
Finally, it is clear that $K(W)$ contains
$-\frac{2}{q}$ and $-2$, in the Cases \ref{t5} and \ref{t6}, respectively.
We do not need the remaining elements of 
$K(W)$ to prove the following propositions.

\begin{prop}  \label{prop:1}
Let $W_1,\ldots, W_6$ be type-II matrices given in 
\ref{t1}--\ref{t6} of
Theorem~{\rm \ref{thm:main}}, respectively.
Then $W_1,\ldots,W_6$ are pairwise inequivalent.
\end{prop}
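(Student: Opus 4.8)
The plan is to separate the six type-II matrices into equivalence classes by comparing the invariant sets $K(W_i)$, using only the handful of elements of $K(W_i)$ that were singled out in Table~\ref{tab:1} and the paragraph following it. Recall that equivalence of type-II matrices implies equality of Haagerup sets, hence equality of the sets $K(W)$; so it suffices to exhibit, for each pair $i\neq j$, a rational (or quadratic-irrational) number lying in exactly one of $K(W_i)$, $K(W_j)$. Since each $K(W_i)$ is a finite set of explicit algebraic numbers depending on $q$, the whole argument reduces to elementary inequalities in $q$ for $q\geq 4$ an even integer, together with a degree/field argument in the one case ($W_6$) where irrationalities enter.

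First I would record the ``cheap'' invariants. From Table~\ref{tab:1}: $W_1$ and $W_2$ both contain $-q^2+3$ and $q^4-6q^2+7$ in $K$; $W_3$ and $W_6$ both contain $-2$; $W_4$ contains $-2(q^2-2)/q^2$; $W_5$ contains $-2/q$. The point is that these numbers are pairwise distinct for $q\geq 4$, and more importantly that membership is exclusive. For instance, to separate $W_1$ from $W_3,W_4,W_5,W_6$ I would argue that $-q^2+3 \le -13$ while every element listed for the other $K$'s is bounded below by a constant independent of $q$ (indeed $-2$, $-2(q^2-2)/q^2 > -2$, $-2/q\ge -1/2$, and the Case~\ref{t6} value $\tfrac{-(q-1)(q-2)+(q+2)r}{2q(q+1)}$ lies in $(-2,2)$ by the computation already carried out in the proof of Corollary~\ref{cor}); hence $-q^2+3$ cannot lie in $K(W_3),\dots,K(W_6)$, and symmetrically $W_1$'s distinguished value is absent from the others' sets only if we also check the finitely many other explicit elements of those sets stay bounded, which they do. The same style of bound separates $W_2$ from $W_3,\dots,W_6$.

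Next I would handle the pairs within each provisional cluster. For $\{W_1,W_2\}$: $K(W_2)$ properly contains $K(W_1)$ — it has the extra element $\tfrac{q^3-3q^2-q+7}{q^2-2q-1}$ from (\ref{a01}), which I would check is not equal to either $-q^2+3$ or $q^4-6q^2+7$ for $q\ge 4$ (a polynomial identity/inequality), and not equal to $1$ (so it is a genuine element of $K$); since also $w_3^{\pm1}\in H(W_2)$ contributes $-q^2+3$ but $|H(W_1)|$ is strictly smaller, $K(W_1)\neq K(W_2)$. For $\{W_3,W_6\}$, both containing $-2$: here I would compare cardinalities or, more robustly, note that $K(W_6)$ contains $\tfrac{-(q-1)(q-2)+(q+2)r}{2q(q+1)}$ with $r=\pm\sqrt{(17q-1)(q-1)}$, which is irrational for all but the sporadic $q$ of Remark~\ref{rem:2} (and even in those exceptional $q$ one checks it is not among the finitely many explicit rationals of $K(W_3)=\{-2,\pm\tfrac{2(q^2-6)}{q^2-4},\pm\tfrac{2(q^4-16q^2+56)}{(q^2-4)^2}\}$), whereas $K(W_3)\subset\Q$; so $K(W_3)\neq K(W_6)$. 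Finally $W_4$ versus $W_5$: $-2(q^2-2)/q^2 \in K(W_4)$ lies in $(-2,-1)$ for $q\ge 4$, while every element of $K(W_5)$ is a power-sum $w_1^k+w_1^{-k}$ with $w_1+w_1^{-1}=-2/q\in(-1/2,0]$, so $K(W_5)\subset(-2,2)$ but a short check of the four values $k=1,2,3,4$ shows none equals $-2(q^2-2)/q^2$; conversely $-2/q\in K(W_5)$ is not equal to any of the four rationals in $K(W_4)=\{-2(q^2-2)/q^2,\ 2(q^4-8q^2+8)/q^4\}$.

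The main obstacle is bookkeeping rather than depth: one must verify a modest but genuine list of scalar inequalities and non-equalities in the parameter $q$ (and, in the $W_6$ case, a field-degree argument to rule out accidental rational values of the $r$-dependent entries). None of these is hard individually — each is a rational-function inequality valid for even $q\ge 4$, provable by clearing denominators and checking sign of a polynomial — but care is needed to make sure that for a given pair one truly finds an element in the symmetric difference of the two $K$-sets rather than merely an element present in one; that is why I would lean on the global bound ``$K(W)\subset(-2,2)$ unless $W\in\{W_1,W_2\}$'' (from Corollary~\ref{cor}'s estimates together with the power-sum structure) to dispatch all comparisons involving $W_1$ or $W_2$ uniformly, and on the rational-versus-irrational dichotomy to dispatch the $\{W_3,W_6\}$ comparison uniformly, leaving only the small finite checks for $\{W_4,W_5\}$ and $\{W_3 \text{ vs } W_4, W_5\}$, $\{W_6 \text{ vs } W_4, W_5\}$.
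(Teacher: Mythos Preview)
Your overall strategy---distinguishing the $W_i$ by exhibiting elements in the symmetric difference of their $K$-sets---is exactly the paper's, and several of your individual separations match the paper verbatim (e.g.\ $(q^3-3q^2-q+7)/(q^2-2q-1)\in K(W_2)\setminus K(W_1)$ and $-2/q\in K(W_5)\setminus K(W_4)$). The difference is organizational. The paper's first cut is the observation that $-1\in H(W_i)$ (equivalently $-2\in K(W_i)$) if and only if $i\in\{3,6\}$, which instantly separates $\{W_3,W_6\}$ from $\{W_1,W_2,W_4,W_5\}$; then the containment $K(W_4),K(W_5)\subset[-2,2]$ (valid because $W_4,W_5$ are complex Hadamard) separates $\{W_4,W_5\}$ from $\{W_1,W_2\}$. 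Only the pairs $(1,2)$, $(4,5)$, $(3,6)$ remain, and each is handled by a single explicit element.

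Your alternative first cut---using $-q^2+3\in K(W_1)\cap K(W_2)$ together with a claimed global bound ``$K(W_i)\subset(-2,2)$ for $i\geq 3$''---has a genuine gap. The bound is literally false: $-2\in K(W_3)\cap K(W_6)$, and more seriously, Case~(vi) includes the choice $r<0$, for which $W_6$ is \emph{not} a complex Hadamard matrix (the proof of Corollary~\ref{cor} shows $a_{0,1}<-2$ there), so you cannot invoke ``entries on the unit circle'' to bound $K(W_6)$. To salvage your argument you would have to bound, for $r<0$, every one of the roughly twenty elements of $K(W_6)$ listed in Table~\ref{tab:1} by a constant strictly above $-q^2+3$. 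This is doable---each $a_{i,j}$ is $O(1)$ as $q\to\infty$ since $|r|\sim\sqrt{17}\,q$---but it is real work you have not carried out, and it is precisely the work the paper's $-1\in H$ criterion sidesteps. Your irrationality argument for $K(W_3)\neq K(W_6)$ is a nice alternative to the paper's direct check, but note it too must be supplemented by the explicit comparison when $q$ lies in the exceptional set of Remark~\ref{rem:2}, as you acknowledge.
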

\begin{proof}
It suffices to show that the sets $K_i=K(W_i)$ ($i=1,\ldots,6$)
are pairwse distinct.
Since $-1\in H(W_i)$ if and only if $i\in\{3,6\}$,
we see that $K_i\not= K_j$ whenever $i\in\{1,2,4,5\}$ and
$j\in\{3,6\}$.
Since $W_4$ and $W_5$ are complex Hadamard matrices by Corollary~\ref{cor},
we see that $K_4$ and $K_5$ are contained in the interval $[-2,2]$,
while neither $K_1$ nor $K_2$ is. Thus 
$K_i\not= K_j$ whenever $i\in\{1,2\}$ and $j\in\{4,5\}$.
Also, we have $K_1\not= K_2$ since
\[
\frac{q^3-3q^2-q+7}{q^2-2q-1}\in K_2\setminus K_1,
\]
and $K_4\not= K_5$ since
\[
-\frac{2}{q}\in K_5\setminus K_4.
\]
It can be checked directly that the element
\[
a_{0,1}=\frac{-(q-1)(q-2)+(q+2)r}{2q(q+1)}\in K_6
\]
does not belong to $K_3$. Thus $K_3\not= K_6$.
\end{proof}

\begin{prop}  \label{prop:2}
Let $W_+$ and $W_-$ be type-II matrices given in 
Theorem~{\rm \ref{thm:main} (vi)} with $r>0$ and $r<0$, respectively.
Then $W_+$ and $W_-$ are inequivalent.
\end{prop}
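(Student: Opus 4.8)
The plan is to use the derived Haagerup set $K(W)$ as an equivalence invariant, exactly as in the proof of Proposition~\ref{prop:1}: if $W_+$ and $W_-$ were equivalent we would have $K(W_+)=K(W_-)$, so it suffices to exhibit one element of $K(W_+)$ that does not lie in $K(W_-)$. The natural candidate is
\[
a_{0,1}=w_1+\frac1{w_1}=\frac{-(q-1)(q-2)+(q+2)r}{2q(q+1)},\qquad r=\sqrt{(17q-1)(q-1)}>0,
\]
which belongs to $K(W_+)$ by Case~\ref{t6} of Table~\ref{tab:1}, and which satisfies $0<a_{0,1}<2$ by the computation already carried out in the proof of Corollary~\ref{cor}.

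The point I would then make is that $K(W_-)$ is disjoint from the open interval $(-2,2)$. For $W_-$ one has $a_{0,1}<-2$ (again from the proof of Corollary~\ref{cor}, where $a_{0,1}+2$ is shown to be negative when $r<0$), so $w_1$ is a real root of $t^2-a_{0,1}t+1$; and then $w_2=\dfrac{w_1^2-1}{a_{1,2}w_1-a_{0,2}}$, with all the $a_{i,j}$ real and the denominator nonzero by Lemma~\ref{lem:3}, is real as well (and likewise $w_3=-w_1w_2$). By the description of $H(W_-)\setminus\{1\}$ in Case~\ref{t6} of Table~\ref{tab:1}, every element of $H(W_-)\setminus\{1\}$ is either $-1$ or a signed power-product of $w_1$ and $w_2$, hence a nonzero real number $w$, and for such $w$ one has $|w+w^{-1}|\geq 2$. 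Therefore $K(W_-)\cap(-2,2)=\emptyset$, whereas $a_{0,1}\in K(W_+)\cap(-2,2)$; hence $K(W_+)\neq K(W_-)$ and $W_+$ is not equivalent to $W_-$.

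The heart of the matter is the dichotomy that $W_+$ is a complex Hadamard matrix — so its nonzero Haagerup ratios all have absolute value $1$ and $K(W_+)\subseteq[-2,2]$ — whereas for $W_-$ the corresponding ratios are all real, forcing $K(W_-)$ entirely outside $(-2,2)$. I do not expect any serious obstacle: the only points needing care are that $w_1$ (and hence $w_2,w_3$) are real for $W_-$, which is immediate from $a_{0,1}<-2$, and that the denominators occurring in Case~\ref{t6} do not vanish, which is guaranteed by Lemma~\ref{lem:3}. In particular no explicit computation of the full sets $K(W_\pm)$ is required, and the argument is insensitive to whether $r$ is rational, so it also covers the arithmetic situations described in Remark~\ref{rem:2}.
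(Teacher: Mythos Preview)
Your proof is correct and uses the same invariant, $K(W)$, as the paper; the execution is simply the mirror image. The paper observes that $W_+$ is a complex Hadamard matrix (Corollary~\ref{cor}), so every element of $H(W_+)$ has absolute value~$1$ and hence $K(W_+)\subseteq[-2,2]$; it then checks that the element
\[
a_{0,1}=\frac{-(q-1)(q-2)-(q+2)\sqrt{(17q-1)(q-1)}}{2q(q+1)}\in K(W_-)
\]
lies outside $[-2,2]$. You instead show $K(W_-)\cap(-2,2)=\emptyset$ and exhibit the element $a_{0,1}$ (with $r>0$) of $K(W_+)$ inside $(0,2)$. The paper's direction is a line shorter, since $K(W_+)\subseteq[-2,2]$ is immediate from the Hadamard property, whereas your containment requires the extra (easy) verification that $w_1,w_2,w_3$ are all real for $W_-$. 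On the other hand, your version makes explicit the structural reason on the $W_-$ side---all Haagerup ratios are real---which the paper leaves implicit.
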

\begin{proof}
By Corollary~\ref{cor}, $W_+$ is a complex Hadamard matrix.
Thus every element of $H(W_+)$ has absolute value $1$,
and hence $K(W_+)$ is contained in the interval $[-2,2]$.
On the other hand, the element
\[
a_{0,1}=\frac{-(q-1)(q-2)-(q+2)\sqrt{(17q-1)(q-1)}}{2q(q+1)}\in K(W_-)
\]
does not belong to the interval $[-2,2]$.
Thus $K(W_+)\not= K(W_-)$.
\end{proof}

We were able to use the Haagerup set to distinguish some of
the complex Hadamard matrices in Theorem~\ref{thm:main}. This is
because the Haagerup set can be described by the intersection
numbers of the association scheme, and is independent of the
isomorphism class. In general, if $q\geq8$ is a power of $2$,
there may be many non-isomorphic association schemes with the
eigenmatrix (\ref{P3}). We do not know whether complex Hadamard
matrices having the same coefficients are equivalent if they
belong to Bose--Mesner algebras of non-isomorphic association schemes.

Note that there are two type-II matrices
described in Theorem~\ref{thm:main}\ref{t1}, since $w_1=w_2=
w_3$ is either of the two zeros of a quadratic equation.
Similarly, there are two type-II matrices in each of \ref{t2}--\ref{t5}
in Theorem~\ref{thm:main}. Moreover, there are four type-II matrices
in \ref{t6} because there are two choices for $r$ and $a_{0,1}^2-4\neq0$.
The following lemma shows that the two type-II matrices
in Theorem~\ref{thm:main}\ref{t1} are inequivalent,
and so are those in Theorem~\ref{thm:main}\ref{t2}.

\begin{lem}\label{lem:ineq1}
Let $W$ and $W'$ be type-II matrices belonging to the
Bose--Mesner algebra of an association scheme 
$\mathcal{X}=(X,\{R_i\}_{i=0}^d)$.
Suppose that each of $W$ and $W'$ has $d+1$ distinct entries, the valencies of 
$\mathcal{X}$ are pairwise distinct, and
$\min\{p_{11}^i\mid 0<i\leq d\}>\frac{|X|}{2}$.
If $W$ and $W'$ are 
equivalent, then $W$ is a scalar
multiple of $W'$.
\end{lem}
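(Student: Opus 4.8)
The plan is to analyze the equivalence $DWD'=TW'T'$ directly, exploiting the hypotheses on the intersection numbers and valencies to force the permutation matrices $T,T'$ to respect the relation structure of $\mathcal{X}$ in a way that pins down the diagonal matrices. Since $W=\sum_{i=0}^d w_i A_i$ and $W'=\sum_{i=0}^d w'_i A_i$ are both symmetric (they lie in a symmetric Bose--Mesner algebra) with constant row sums, a first observation is that the condition $\min\{p_{11}^i\mid 0<i\leq d\}>|X|/2$ means that for every pair of vertices $x,y$, at least one path of length $2$ through an $R_1$-edge from $x$ meets an $R_1$-edge to $y$; more importantly, it forces $A_1^2$ (and hence, by fusion of the scheme-combinatorial data, any $A_i$) to be expressible via overlaps involving $R_1$. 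I would use this to conclude that the partition of the off-diagonal positions of $W$ into its $d$ level sets $\{(x,y):(x,y)\in R_i\}$ is recovered intrinsically from the multiset of entries together with the incidence pattern, so any permutation equivalence must permute the relations $R_i$ among themselves.

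**Reducing to monomial equivalence.** First I would normalize: after multiplying by scalars we may assume the $(x,x)$ entry of $W$ and $W'$ equals $1$ (the value $w_0$), using $A_0=I$. Then in $DWD'=TW'T'$, comparing diagonal entries and using that $W,W'$ have all-$1$ diagonal shows the diagonal matrices $D,D'$ are mutually inverse up to the permutation, i.e. the equivalence is essentially a \emph{monomial} one: there is a permutation $\sigma$ of $X$ and a diagonal $D$ with $W'_{x,y}=D_x D_y^{-1}\,W_{\sigma(x),\sigma(y)}$ (this is the standard reduction for symmetric type-II matrices; one reads off $D$ from a single row). The hypothesis that $W$ and $W'$ each have exactly $d+1$ distinct entries, and that the valencies $k_1,\dots,k_d$ are pairwise distinct, then guarantees that the level set $R_i$ of $W$, namely $\{(x,y):W_{x,y}=w_i\}$, has size $|X|\,k_i$, and these sizes are pairwise distinct; hence $\sigma$ must carry each $R_i$ to the unique relation of the same valency, i.e. $\sigma$ is an automorphism of the \emph{scheme} $\mathcal{X}$ (a "color automorphism" with trivial color permutation).

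**Killing the diagonal twist.** Once $\sigma\in\operatorname{Aut}(\mathcal{X})$, we have $W_{\sigma(x),\sigma(y)}=W_{x,y}$, so the relation reads $W'_{x,y}=D_x D_y^{-1} W_{x,y}$ for all $x,y$. Fixing a base vertex $x_0$ and scaling so $D_{x_0}=1$, the $x_0$-row gives $W'_{x_0,y}=D_y^{-1}W_{x_0,y}$, hence $D_y^{-1}\in\{w'_i/w_i\}$; but now I would use the intersection-number hypothesis once more: since $p_{11}^i>|X|/2$ for every $i\neq0$, for any two vertices $y,z$ with $(y,z)\in R_i$ there is a common $R_1$-neighbour $x$, and writing the relation at the four positions $(x_0,y),(x_0,z),(x,y),(x,z)$ forces $D_xD_y^{-1}\cdot D_xD_z^{-1}$-type products to be consistent — tracking these around the (connected, by the $p_{11}^i>0$ hypothesis) graph on $R_1$ shows $D$ is \emph{constant}. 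Therefore $D=D_{x_0}I=I$, and $W'=W$ as functions on positions, i.e. $w'_i=w_i$ for all $i$; in particular $W$ is a scalar multiple of $W'$ (indeed equal, after the normalization). The main obstacle I anticipate is the middle step: rigorously extracting that the permutation is color-preserving from "$d+1$ distinct entries + distinct valencies" without circularity — one must be careful that the distinct-entries hypothesis is about $W$ and $W'$ separately and pair it correctly with counting level-set sizes $|X|k_i$; and then the connectivity argument for constancy of $D$ needs $p_{11}^i>0$ for all $i$ (which follows from $p_{11}^i>|X|/2\geq 1$), ensuring the $R_1$-graph links every relation class. Everything else is bookkeeping with the type-II/symmetry identities already established in Lemmas~\ref{lem:I} and~\ref{lem:I1}.
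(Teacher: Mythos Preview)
Your outline has a genuine gap at the very first step. You claim that ``comparing diagonal entries and using that $W,W'$ have all-$1$ diagonal shows the diagonal matrices $D,D'$ are mutually inverse up to the permutation,'' and you then work with a single permutation $\sigma$ and the form $W'_{x,y}=D_xD_y^{-1}W_{\sigma(x),\sigma(y)}$. But the definition of equivalence allows \emph{two} independent permutations: with $T=(\delta_{\tau(i),j})$ and $T'=(\delta_{\tau'(i),j})$, the $(x,y)$-entry of $TW'T'^{-1}$ is $W'_{\tau(x),\tau'(y)}$, so the diagonal comparison only yields $d_xd'_x=W'_{\tau(x),\tau'(x)}$, and there is no reason $\tau(x)=\tau'(x)$. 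No ``standard reduction for symmetric type-II matrices'' collapses the two permutations into one here. This also undermines your next step: even if you had the monomial form, you argue that $\sigma$ must carry each $R_i$ to the relation of the same valency by counting level-set sizes --- but with the twist $D_xD_y^{-1}$ still present, the level sets of $W'$ are \emph{not} the $\sigma$-images of the level sets of $W$, so the counting argument is circular (you would need $D$ constant to run it, which is exactly what you are trying to prove afterwards).

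The paper's proof inverts your order of operations and thereby avoids the circularity. It uses the hypothesis $\min\{p_{11}^i\mid i>0\}>|X|/2$ \emph{first}, and uses it via pigeonhole rather than connectivity: for any $x_1\neq x_2$, both $R_1(x_1)\cap R_1(x_2)$ and $\tau'^{-1}\bigl(R_1(\tau(x_1))\cap R_1(\tau(x_2))\bigr)$ have size exceeding $|X|/2$, so they meet in some $y$. Then $(x_1,y),(x_2,y)\in R_1$ and $(\tau(x_1),\tau'(y)),(\tau(x_2),\tau'(y))\in R_1$, so the $(x_1,y)$- and $(x_2,y)$-entries of $DWD'=TW'T'^{-1}$ read $d_{x_1}w_1d'_y=w'_1=d_{x_2}w_1d'_y$, whence $d_{x_1}=d_{x_2}$. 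This gives $D=dI$ (and by symmetry $D'=d'I$) \emph{without} ever analyzing what $\tau,\tau'$ do to the relations. Only then, with $dd'\,W=TW'T'^{-1}$, does the counting argument with distinct entries and distinct valencies apply cleanly to give $dd'\,w_i=w'_i$.
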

\begin{proof}
Suppose that $W$ and $W'$ are 
equivalent. Then there exist
diagonal matrices $D=\diag(d_1,\dots,d_n)$, $D'=\diag(d'_1,\dots,d'_n)$
and permutation matrices $T=(\delta_{\tau(i),j})$,
$T'=(\delta_{\tau'(i),j})$ such that 
\begin{equation}\label{DWD}
DWD'=TW'{T'}^{-1}
\end{equation}
holds. 
Let
\begin{align*}
W&=\sum_{i=0}^dw_i A_i,\\
W'&=\sum_{i=0}^dw'_i A_i.
\end{align*}
For $x\in X$, define
\[
R_1(x)=\{y\in X\mid (x,y)\in R_1\}.
\]
Let $x_1,x_2\in X$ be distinct. Then
$|R_1(x_1)\cap R_1(x_2)|>\frac{|X|}{2}$ by the assumption.
Similarly,
$|R_1(\tau(x_1))\cap R_1(\tau(x_2))|>\frac{|X|}{2}$, hence
\[|\tau'^{-1}(R_1(\tau(x_1))\cap R_1(\tau(x_2)))|>\frac{|X|}{2}.\]
Therefore, there exists 
\[
y\in R_1(x_1)\cap R_1(x_2)\cap
\tau'^{-1}(R_1(\tau(x_1))\cap R_1(\tau(x_2))).
\]
Equivalently, $y$ satisfies
\[
(x_1,y),(x_2,y),(\tau(x_1),\tau'(y)),(\tau(x_2),\tau'(y))\in R_1.
\]
Comparing $(x_1,y)$- and $(x_2,y)$-entries of (\ref{DWD}), we find
\[
d_{x_1}w_1 d'_y=d_{x_2}w_1 d'_y=w'_1.
\]
This implies $d_{x_1}=d_{x_2}$. Since $x_1,x_2\in X$ were
arbitrary, $D=dI$ for some $d$. Similarly, $D'=d'I$ for some $d'$.
We now have
\[
dd'W=TW'{T'}^{-1}.
\]
Since $W$ has $d+1$ distinct entries, we have
\begin{align*}
|X|k_i&=
|\{(x,y)\in X\times X\mid (TW'{T'}^{-1})_{x,y}=dd'w_i\}|
\nexteq
|\{(x,y)\in X\times X\mid W'_{\tau(x),\tau'(y)}=dd'w_i\}|
\nexteq
|\{(x,y)\in X\times X\mid W'_{x,y}=dd'w_i\}|.
\end{align*}
Since $W'$ has $d+1$ distinct entries and
the valencies are pairwise distinct, we obtain
$dd'w_i=w'_i$. Hence $dd'W=W'$.
\end{proof}

\begin{prop}
Let $W$ be a type-II matrix given
in \ref{t1} of Theorem~{\rm \ref{thm:main}}.
Then $W$ and $W^{(-)}$ are inequivalent.
The same conclusion holds if $W$ is a type-II matrix given
in \ref{t2} of Theorem~{\rm \ref{thm:main}}.
\end{prop}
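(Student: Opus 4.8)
The plan is to deduce both statements from Lemma~\ref{lem:ineq1}. First I would record that $W^{(-)}$ is again a type-II matrix of the same type, lying in $\mathcal{M}$ with coefficients $w_i^{-1}$. In Case~\ref{t1} we have $W=A_0+w_1(A_1+A_2+A_3)=I+w_1(J-I)$ with $w_1+w_1^{-1}=-(q^2-3)$, so $W^{(-)}=I+w_1^{-1}(J-I)$ and $w_1^{-1}$ is the other root of $t+t^{-1}+q^2-3=0$; in Case~\ref{t2} we have $W=A_0+w_1(A_1+A_2)+w_3A_3$, and the identity $w_1^{-1}=\bigl(-(q-3)w_3^{-1}+(q-1)\bigr)/(q^2-2q-1)$ established in the proof of Theorem~\ref{thm:main} shows that $W^{(-)}$ is the Case~\ref{t2} matrix with $w_3$ replaced by $w_3^{-1}$. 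In either case $W\neq W^{(-)}$, because $w_3\neq w_3^{-1}$ (as $w_3+w_3^{-1}=-(q^2-3)\notin\{2,-2\}$ when $q\geq4$). The subtlety is that $W$ has too few distinct entries for Lemma~\ref{lem:ineq1} to be applied to the four-class algebra $\mathcal{M}$ directly, so the point is to pass to a fusion scheme of $\mathfrak{X}$ in which $W$ and $W^{(-)}$ have the full number of distinct entries.

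For Case~\ref{t1}, both $W$ and $W^{(-)}$ lie in the Bose--Mesner algebra of the one-class scheme $(X,\{R_0,R_1\cup R_2\cup R_3\})$, in which $W=A_0+w_1(J-I)$ has the two distinct entries $1$ and $w_1$ (distinct since $w_1\neq1$), and similarly for $W^{(-)}$. The two valencies $1$ and $n-1$ are distinct, and $p_{11}^1=n-2>n/2$ since $n=q^2-1\geq15$, so Lemma~\ref{lem:ineq1} applies. For Case~\ref{t2}, since $w_1=w_2$, both $W$ and $W^{(-)}$ lie in the Bose--Mesner algebra of the fusion scheme $\mathcal{X}_1=(X,\{R_0,R_1\cup R_2,R_3\})$ with first eigenmatrix (\ref{P1}); here $W$ has the three entries $1$, $w_1$, $w_3$, which are pairwise distinct because $a_{0,1}\neq2$ by (\ref{a01}), $a_{0,3}=-(q^2-3)\neq2$ by (\ref{a03}), and $a_{1,3}\neq2$ by (\ref{a13}), and likewise $W^{(-)}$ has three distinct entries. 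The valencies $1$, $q(q-1)$, $q-2$ of $\mathcal{X}_1$ are pairwise distinct for $q\geq4$, and from (\ref{BB1}) and (\ref{BB2}) one computes
\[
(A_1+A_2)^2=q(q-1)A_0+(q-1)^2(A_1+A_2)+q(q-1)A_3,
\]
so the intersection numbers $p_{11}^1=(q-1)^2$ and $p_{11}^2=q(q-1)$ of $\mathcal{X}_1$ satisfy $\min\{p_{11}^1,p_{11}^2\}=(q-1)^2>(q^2-1)/2=|X|/2$ for $q\geq4$; again Lemma~\ref{lem:ineq1} applies.

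It then remains to finish in the same way in both cases: if $W$ and $W^{(-)}$ were equivalent, Lemma~\ref{lem:ineq1} would give $W=cW^{(-)}$ for a scalar $c$, and comparing the coefficient of $A_0=I$ (which is $1$ in each of $W$ and $W^{(-)}$) forces $c=1$; hence $W=W^{(-)}$ and $w_i=w_i^{-1}$ for all $i$, so $w_3\in\{1,-1\}$, contradicting $w_3+w_3^{-1}=-(q^2-3)$. The only genuine work is the verification in the second paragraph --- counting the distinct entries of $W$ and $W^{(-)}$ inside the correct fusion algebra, and checking the intersection-number inequality $\min_i p_{11}^i>|X|/2$ --- and that is where I expect the (still routine) effort to lie; everything else is immediate from Lemma~\ref{lem:ineq1}.
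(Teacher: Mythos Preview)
Your proof is correct and follows essentially the same approach as the paper: pass to the appropriate fusion scheme (the trivial one-class scheme for Case~\ref{t1}, and $\mathcal{X}_1$ for Case~\ref{t2}) and invoke Lemma~\ref{lem:ineq1}. You supply more detail than the paper does---the explicit verification of the intersection-number inequality and the argument that $W=cW^{(-)}$ forces $c=1$ and hence a contradiction---but the strategy is identical.
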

\begin{proof}
First, let $W$ be a type-II matrix given
in \ref{t1} of Theorem~{\rm \ref{thm:main}}.
Then the matrices $W$ and $W^{(-)}$ belong to the Bose--Mesner
algebra of the association scheme
$(X,\{R_0,R_1\cup R_2\cup R_3\})$. Since 
$W$ has two distinct entries and this association scheme
satisfies the hypothesis of Lemma~\ref{lem:ineq1}, 
$W$ and $W^{(-)}$ are inequivalent.

If $W$ is a type-II matrix given
in \ref{t2} of Theorem~{\rm \ref{thm:main}}, then
$W$ and $W^{(-)}$ belong to the Bose--Mesner
algebra of the association scheme
$(X,\{R_0,R_1\cup R_2,R_3\})$. Since 
$W$ has three distinct entries and this association scheme
satisfies the hypothesis of Lemma~\ref{lem:ineq1}, 
$W$ and $W^{(-)}$ are inequivalent.
\end{proof}

We do not know whether the two type-II matrices
in each of \ref{t3}--\ref{t5} in Theorem~\ref{thm:main}
are equivalent or not, and whether the two type-II matrices
in Theorem~\ref{thm:main}\ref{t6} with a given sign for $r$ 
are equivalent or not.

\section{Nomura algebras}\label{sec:6}
Since $q^2-1$ is a composite, there are uncountably many
inequivalent complex Hadamard matrices of order $q^2-1$
by \cite{Craigen}. Indeed, such matrices can be constructed
using generalized tensor products \cite{HS}. We show that none of our
complex Hadamard matrices is equivalent to a generalized tensor product.
This is done by showing that the Nomura algebra of any of our
complex Hadamard matrices has dimension $2$. 
According to
\cite{HS}, the Nomura algebra of the generalized tensor product
of type-II matrices is imprimitive, and this is never the case
when it has dimension $2$. 

For a type-II matrix $W\in M_X(\C)$ and $a,b\in X$, we define column vectors
$Y_{ab}$ by setting 
\[
(Y_{ab})_x=
\frac{W_{xa}}{W_{xb}}\quad(x\in X).
\]
The {\em Nomura algebra} $N(W)$ of $W$ is the algebra of matrices
in $M_n(\C)$ such that $Y_{ab}$ is an eigenvector for all $a,b\in X$.
It is shown in \cite[Theorem 1]{JMN} that the Nomura algebra is 
a Bose--Mesner algebra. 
We first show that the Nomura algebra consists of 
symmetric matrices.

Throughout this section, we let $N$ denote the Nomura algebra
of any of the type-II matrices given in Theorem~\ref{thm:main}.

\begin{lem}\label{lem:NS}
The algebra $N$ is symmetric.
\end{lem}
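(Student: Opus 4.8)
The plan is to work directly with the defining property of the Nomura algebra and exploit the symmetry of the underlying association scheme together with the fact that each of our type-II matrices $W$ lies in the symmetric Bose--Mesner algebra $\mathcal{M}$, so $W$ itself is symmetric. First I would recall from \cite{JMN} that $N=N(W)$ is a Bose--Mesner algebra, hence it is closed under transposition; so to prove $N$ is symmetric it suffices to show that $N$ contains no matrix which is not symmetric, equivalently that the transpose-closure does not enlarge the set of common eigenvectors beyond what forces symmetry. The cleanest route: show that $N(W)^\top = N(W^\top)$ in general (this is a standard and easy identity, since $(W^\top)_{xa}/(W^\top)_{xb} = W_{ax}/W_{bx}$ relates the vectors $Y_{ab}$ for $W^\top$ to those for $W$ by a permutation of roles), and then use $W^\top = W$ to conclude $N(W)^\top = N(W)$. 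Combined with the fact that a Bose--Mesner algebra closed under transposition and containing $I$ and $J$ is symmetric precisely when each primitive idempotent is symmetric, I would then need the extra input that distinguishes ``formally symmetric'' (closed under $\top$) from ``symmetric'' (each $E_i = E_i^\top$).

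So the core of the argument is: since $W$ is symmetric, for all $a,b$ the vector $Y_{ab}$ satisfies $(Y_{ab})_x = W_{xa}/W_{xb} = W_{ax}/W_{bx}$, and one computes that $Y_{ba}$ is the entrywise inverse of $Y_{ab}$. If $M \in N$ with eigenvalue $\theta_{ab}$ on $Y_{ab}$, I would examine how $M^\top$ and $\bar M$ act. The key observation is that because $W$ has the additional structure of lying in $\mathcal{M}$ — indeed $W = \sum_i w_i A_i$ with the $A_i$ symmetric $01$-matrices — the map $a \mapsto$ (the $R_i$-neighborhoods) is symmetric, and the vectors $Y_{ab}$ depend only on the pair of relations involved; this is the mechanism behind \cite[Theorem 1]{JMN} showing $N$ is a Bose--Mesner algebra in the first place. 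I would then argue that the eigenvalue assignment $\theta_{ab}$ is symmetric under $(a,b) \mapsto (b,a)$ up to complex conjugation, which forces each spectral projection of $M$ to be a symmetric matrix, hence $N$ is symmetric.

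Concretely, the steps in order are: (1) establish $N(W^\top) = N(W)^\top$ from the definitions; (2) invoke $W = W^\top$ to get $N$ is closed under transposition; (3) establish the analogous statement $N(\bar W) = \overline{N(W)}$ and note that for our $W$, $\bar W = W^{(-)}$ differs from $W$ only by replacing $w_i$ with $w_i^{-1}$, which does not change the partition of $X^2$ into level sets of $Y_{ab}$, so $\overline{N(W)} = N(W)$, i.e. $N$ is closed under complex conjugation; (4) deduce that $N$ is closed under the Hermitian adjoint $\ast = \bar{}\circ\top$, but being a Bose--Mesner algebra it is already $\ast$-closed, so the new content is that it is $\top$-closed and $\bar{}$-closed separately; (5) conclude that every primitive idempotent $E_i$, being a real polynomial combination — after averaging $E_i$ with $E_i^\top$ and $\overline{E_i}$ — must equal its own transpose, since the $E_i$ are linearly independent and the transposition permutation of $\{E_0,\dots,E_{\dim N - 1}\}$ composed with conjugation is trivial on a $2$-element index set once we know, later in the paper, that $\dim N = 2$; but to avoid circularity I would instead argue directly that the Bose--Mesner algebra generated by a symmetric $01$-partition underlying $Y_{ab}$ is symmetric.

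The main obstacle I anticipate is step (5): a Bose--Mesner algebra closed under transposition need not be symmetric in general (the transpose may permute the $A_i$ nontrivially), so I cannot conclude symmetry from closure alone. The resolution must be that the specific structure of $W$ forces the relation $(a,b) \mapsto (b,a)$ on $X \times X$ to preserve each level set of the function $(a,b) \mapsto Y_{ab}$ (not merely permute them), because $W$ is symmetric \emph{and} the entries $w_i$ and $w_i^{-1}$ generate the same value pattern; this symmetry of the level-set partition is exactly what upgrades $\top$-closure to genuine symmetry of the Schur idempotents, hence of $N$. I would make this precise by showing $Y_{ba} = Y_{ab}^{(-)}$ and that $M \mapsto M^\top$ sends the eigenvalue $\theta_{ab}(M)$ to $\theta_{ab}(M^\top)$ with $\theta_{ab}(M^\top) = \theta_{ba}(M)$, and then that the Schur idempotents of $N$ — which by \cite{JMN} are indexed by the level sets of $(a,b)\mapsto Y_{ab}$ — are each invariant under $(a,b)\mapsto(b,a)$, giving $A_i = A_i^\top$ for the Schur idempotents $A_i$ of $N$, i.e. $N$ is symmetric.
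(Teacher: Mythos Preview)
Your proposal has a genuine gap. You correctly observe that $W=W^\top$ forces $N(W)^\top=N(W)$, and you correctly recognize that transpose-closure of a Bose--Mesner algebra is strictly weaker than symmetry (the Bose--Mesner algebra of any non-symmetric association scheme is already transpose-closed). The problem is the attempted repair in your last paragraph. From $W=W^\top$ one has $Y_{ba}=Y_{ab}^{(-)}$, but this does \emph{not} place $(a,b)$ and $(b,a)$ in the same class for any of the partitions arising in \cite{JMN}: the vectors $Y_{ab}$ and $Y_{ab}^{(-)}$ are in general linearly independent, and for $M\in N$ the eigenvalue $\theta_{ba}(M)$ need not equal $\theta_{ab}(M)$. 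So the swap $(a,b)\mapsto(b,a)$ merely permutes the classes rather than fixing them individually, which recovers only the transpose-closure you already had and says nothing about $A_i=A_i^\top$ for the Schur idempotents of $N$. A secondary issue: in step~(3) you write $\bar W=W^{(-)}$, but this requires $|w_i|=1$ for all $i$, which fails in cases~(i), (ii), and (vi) with $r<0$ of Theorem~\ref{thm:main}, where $W$ is type-II but not complex Hadamard.

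The paper's proof is completely different and is case-by-case rather than structural. It invokes \cite[Proposition~6(i)]{JMN}: if $N(W)$ is not symmetric then there exist $b\neq c$ with $\langle Y_{bc},Y_{bc}\rangle=\sum_{x} W_{x,b}^2/W_{x,c}^2=0$. Because $W\in\mathcal{M}$, this sum depends only on the relation $R_i$ containing $(b,c)$ and can be rewritten as $\sum_{j<k}p_{jk}^i(a_{j,k}^2-2)+\sum_{j}p_{jj}^i$ in terms of the intersection numbers and the quantities $a_{j,k}=w_j/w_k+w_k/w_j$. One then verifies by direct (computer) computation that this expression is nonzero for every $i\in\{1,2,3\}$ in each of the six cases of Theorem~\ref{thm:main}, contradicting the assumption.
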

\begin{proof}
Suppose that $N$ is not symmetric. Then
by \cite[Proposition 6(i)]{JMN}, there exists $(b,c)\in X^2$ with $b\neq c$
such that
\[
\sum_{x\in X}\frac{W_{x,b}^2}{W_{x,c}^2}=0
\]
This is equivalent to
\[
\sum_{j,k}
p_{jk}^i \frac{w_j^2}{w_k^2}=0
\]
for some $i\in\{1,2,3\}$. Using the notation (\ref{az2}), we have
\begin{align}
\sum_{j,k}p_{jk}^i \frac{w_j^2}{w_k^2}
&=
\sum_{j<k}p_{jk}^i 
\left(\frac{w_j^2}{w_k^2}
+\frac{w_k^2}{w_j^2}\right)
+\sum_{j=0}^3 p_{jj}^i 
\nnexteq
\sum_{j<k}p_{jk}^i 
\left(\left(\frac{w_j}{w_k}
+\frac{w_k}{w_j}\right)^2-2\right)
+\sum_{j=0}^3 p_{jj}^i 
\nnexteq
\sum_{j<k}p_{jk}^i (a_{j,k}^2-2)
+\sum_{j=0}^3 p_{jj}^i. 
\label{eq:NS}
\end{align}
It can be verified by computer that (\ref{eq:NS})
is nonzero for each of the cases (i)--(vi)
in Theorem~\ref{thm:main}.
\end{proof}

Since $N$ is symmetric, the adjacency matrices of $N$ are the
$(0,1)$-ma\-trices representing the connected components of 
the Jones graph defined as follows
(see \cite[Sect.~3.3]{JMN}).
The {\em Jones graph} of a type-II matrix $W\in M_X(\C)$ is the graph
with vertex set $X^2$ such that two distinct vertices $(a,b)$
and $(c,d)$ are adjacent whenever $\langle Y_{ab}, Y_{cd}\rangle\neq0$,
where $\langle \;,\;\rangle$ denotes the ordinary (not Hermitian)
scalar product.

\begin{prop}\label{prop:N2}
The algebra $N$ coincides with the linear span of $I$ and $J$.
In particular, none of the type-II matrices given in 
Theorem~{\rm \ref{thm:main}} is equivalent to a nontrivial generalized
tensor product.
\end{prop}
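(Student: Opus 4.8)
The plan is to show that the Jones graph of each type-II matrix $W$ in Theorem~\ref{thm:main} is connected, except for the $|X|$ loops at the diagonal pairs $(a,a)$, which will force $N$ to have exactly two adjacency matrices, namely $I$ and $J-I$, hence $N=\langle I,J\rangle$. Since $N$ is already known to be a Bose--Mesner algebra by \cite[Theorem~1]{JMN}, and is symmetric by Lemma~\ref{lem:NS}, its adjacency matrices are precisely the $(0,1)$-matrices of the connected components of the Jones graph; so the whole task reduces to a connectivity statement. The second assertion of the proposition is then immediate: a generalized tensor product has an imprimitive Nomura algebra by \cite{HS}, while a Bose--Mesner algebra of dimension $2$ (that of the complete graph) is primitive, so no $W$ in Theorem~\ref{thm:main} can be equivalent to a nontrivial generalized tensor product.

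The key computation is the inner product $\langle Y_{ab},Y_{cd}\rangle=\sum_{x\in X} W_{xa}W_{xc}/(W_{xb}W_{xd})$. Because $W=\sum_i w_iA_i$ lies in the Bose--Mesner algebra of $\mathfrak{X}$, this sum depends only on the ``relation pattern'' of the quadruple $(a,b,c,d)$, i.e.\ on the relations $R_i$ joining the six pairs among $a,b,c,d$; grouping $x$ according to its relations to $a,b,c,d$ turns $\langle Y_{ab},Y_{cd}\rangle$ into an explicit expression in the $w_i$ and the intersection numbers (\ref{BB1})--(\ref{BB3}) of $\mathfrak{X}$. First I would set up, for each of the finitely many relation patterns of an ordered quadruple of (not necessarily distinct) points, the corresponding value of this inner product as a Laurent polynomial in $w_1,w_2,w_3$; then I would substitute the specific values of $w_1,w_2,w_3$ from each case \ref{t1}--\ref{t6} of Theorem~\ref{thm:main} and check which patterns give a nonzero value, thereby reading off the edges of the Jones graph. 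Finally I would verify that the resulting graph on $X^2\setminus\{(a,a):a\in X\}$ is connected --- in practice by exhibiting, for each case, a small set of relation patterns whose edges already suffice to connect every off-diagonal pair to every other. As with the other case analyses in Section~\ref{sec:4}, this verification is carried out by computer (Magma \cite{magma}).

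The main obstacle will be organizing the case analysis so that it is both correct and convincing: there are six families of matrices, and within case \ref{t6} four algebraically conjugate matrices parametrized by the sign of $r$ and the choice of $w_1$, and for each one must rule out the possibility that the off-diagonal Jones graph splits into $\geq 2$ components. The delicate point is that a coincidence among the $w_i$ (for instance $w_2=-1$ in \ref{t3}, or $w_1w_2=-w_3$ in \ref{t6}) could in principle make enough inner products vanish to disconnect the graph, so one must check that the surviving nonzero inner products still span all of $X^2$. Since these vanishing conditions are polynomial identities in $q$ (and $r$) and the relevant inner products are explicit, the verification is a finite check valid for all even $q\geq 4$; I would record in the text, for each case, one or two explicit families of edges (relation patterns together with the nonvanishing of the associated Laurent polynomial) that already connect the off-diagonal vertices, and relegate the exhaustive check to Magma.

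\begin{proof}
By Lemma~\ref{lem:NS}, $N$ is symmetric, so by \cite[Sect.~3.3]{JMN} its
adjacency matrices are the $(0,1)$-matrices representing the connected
components of the Jones graph of $W$. The diagonal pairs $(a,a)$ form the
component giving the identity matrix $I$. For any two distinct off-diagonal
pairs $(a,b)$ and $(c,d)$, the inner product
$\langle Y_{ab},Y_{cd}\rangle
=\sum_{x\in X}\frac{W_{xa}W_{xc}}{W_{xb}W_{xd}}$
depends only on the relations among $a,b,c,d$, and is an explicit Laurent
polynomial in $w_1,w_2,w_3$ with coefficients determined by the intersection
numbers (\ref{BB1})--(\ref{BB3}). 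Substituting the values of $w_1,w_2,w_3$
from each of the cases \ref{t1}--\ref{t6} of Theorem~\ref{thm:main}, one
verifies by computer (Magma \cite{magma}), for all even $q\geq4$ and for
both choices of $r$ in \ref{t6}, that the off-diagonal pairs all lie in a
single connected component of the Jones graph. Hence $N$ has exactly two
adjacency matrices, $I$ and $J-I$, so $N=\langle I,J\rangle$.

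In particular $N$ has dimension $2$ and is the Bose--Mesner algebra of the
complete graph, which is primitive. By \cite{HS}, the Nomura algebra of a
nontrivial generalized tensor product of type-II matrices is imprimitive;
therefore none of the type-II matrices in Theorem~\ref{thm:main} is
equivalent to a nontrivial generalized tensor product.
\end{proof}
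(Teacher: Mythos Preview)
Your plan has a genuine gap at its central claim: that
$\langle Y_{ab},Y_{cd}\rangle$ ``depends only on the relation pattern of
the quadruple $(a,b,c,d)$'' and is therefore an explicit Laurent polynomial
in $w_1,w_2,w_3$ with coefficients determined by the intersection numbers
(\ref{BB1})--(\ref{BB3}). This is not true. Writing the sum over $x\in X$
according to the relations of $x$ to $a,b,c,d$, the coefficient of
$w_iw_k/(w_jw_l)$ is the number of $x$ with
$(x,a)\in R_i$, $(x,b)\in R_j$, $(x,c)\in R_k$, $(x,d)\in R_l$; these are
\emph{four-point} intersection numbers, and they are not determined by the
$p_{ij}^k$. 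In fact, as remarked at the end of Section~\ref{sec:5}, for
$q\geq8$ there may be non-isomorphic schemes with the same
eigenmatrix~(\ref{P3}), so these counts can genuinely differ from one
realization of the scheme to another. Thus you cannot ``substitute the
values of $w_1,w_2,w_3$'' into a single Laurent polynomial and decide
nonvanishing uniformly in $q$.

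The paper's proof avoids this obstacle in two different ways. For vertices
$(x,y),(x,z)$ with $(x,y),(y,z),(z,x)\in R_3$, it introduces the unknown
triple-counts $c_{i,j,k}$ as variables subject only to the linear
constraints $c_{1,j,k}+c_{2,j,k}=p_{jk}^3$ (and symmetric versions), and
then uses \emph{both} equations
$\sum c_{i,j,k}\,w_i^2/(w_jw_k)=0$ and $\sum c_{i,j,k}\,w_jw_k/w_i^2=0$
(the second coming from $(y,x),(z,x)$ via Lemma~\ref{lem:NS}) to eliminate
the $c_{i,j,k}$ and obtain a polynomial equation in $q$ alone, which is
then checked to have no even integer solution. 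For connecting across the
equivalence classes of $R_0\cup R_3$, it sums
$\langle Y_{xy},Y_{xz}\rangle$ over all $y$ in a class, which collapses the
unknown four-point count to the product $p_{ij}^\ell\,p_{3k}^i$ of ordinary
intersection numbers; nonvanishing of this sum shows that \emph{some}
$y$ in the class works. Your proposal would need analogous devices to be
salvageable; simply enumerating relation patterns and evaluating is not
enough.
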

\begin{proof}
We aim to show that $\{(x,y)\mid x,y\in X,\;x\neq y\}$ is a 
connected component of the Jones graph.
Fix $x,y,z$ with $(x,y),(y,z),(z,x)\in R_3$. 
Then in the Jones graph, $(x,y)$ and $(y,x)$ belong to the
same connected component, and 
$(x,z)$ and $(z,x)$ belong to the same connected component,
by Lemma~\ref{lem:NS}.

We claim that $(x,y)$ and $(x,z)$ belong to the same connected component.
Indeed, if $(x,y)$ and $(x,z)$ belong to different connected components, then
$(y,x)$ and $(z,x)$ belong to different connected components.
In particular, 
\[
\langle Y_{xy},Y_{xz}\rangle=\langle Y_{yx},Y_{zx}\rangle=0.
\]
Let
\[
c_{i,j,k}=|\{u\in X\mid (x,u)\in R_i,\;(y,u)\in R_j,\;(z,u)\in R_k\}|.
\]
Then for $j,k\in\{1,2\}$,
\[
c_{1,j,k}+c_{2,j,k}=
c_{j,1,k}+c_{j,2,k}=
c_{j,k,1}+c_{j,k,2}=p_{jk}^3,
\]
and
\[
\sum_{i,j,k=0}^3 c_{i,j,k}\frac{w_i^2}{w_jw_k}=
\sum_{i,j,k=0}^3 c_{i,j,k}\frac{w_jw_k}{w_i^2}=0.
\]
It can be verified by computer that these conditions
give rise  to a polynomial equation in $q$ which has no solution
in even positive integers. 
Therefore, we have proved the claim.
This implies that, for each equivalence class $C$ of the
equivalence relation $R_0\cup R_3$,
$(C\times C)\cap R_3$ is a clique in the Jones graph.

Let $C$ and $C'$ be two distinct equivalence classes of $R_0\cup R_3$.
We claim that,
for any $(x,z)\in C\times C'$, there exist $y\in C$ and $y'\in C'$ such that
$\langle Y_{xy},Y_{xz}\rangle\neq0$ and
$\langle Y_{y'z},Y_{xz}\rangle\neq0$.
Suppose
$(x,z)\in R_1$ and 
$\langle Y_{xy},Y_{xz}\rangle=0$ for all $y\in C$. Then
\begin{align*}
0&=\sum_{y\in C}\langle Y_{xy},Y_{xz}\rangle
\nexteq
\sum_{y\in C}\sum_{u\in X} (Y_{xy})_u(Y_{xz})_u
\nexteq
\sum_{y\in C}\sum_{u\in X} \frac{W_{xu}^2}{W_{yu}W_{zu}}
\nexteq
\sum_{y\in C}\sum_{i,j=0}^3 \sum_{u\in R_i(x)\cap R_j(z)}
\frac{W_{xu}^2}{W_{yu}W_{zu}}
\nexteq
\sum_{i,j=0}^3 \sum_{u\in R_i(x)\cap R_j(z)}
\sum_{k=0}^3 \sum_{y\in C\cap R_k(u)}
\frac{w_i^2}{w_kw_j}
\nexteq
\sum_{i,j=0}^3 \sum_{u\in R_i(x)\cap R_j(z)}
\sum_{k=0}^3 p_{3k}^i \frac{w_i^2}{w_kw_j}
\nexteq
\sum_{i,j,k=0}^3 p_{ij}^1
p_{3k}^i \frac{w_i^2}{w_kw_j}.
\end{align*}
It can be verified by computer that this leads to a polynomial
equation in $q$ which has no solution in even positive integers. 
Similarly, suppose $(x,z)\in R_2$ and
$\langle Y_{xy},Y_{xz}\rangle=0$ for all $y\in C$. Then
\[
\sum_{i,j,k=0}^3 p_{ij}^2
p_{3k}^i \frac{w_i^2}{w_kw_j}=0,
\]
and again this leads to a contradiction.
Thus, there exists $y\in C$
such that $\langle Y_{xy},Y_{xz}\rangle\neq0$.
Switching the role of $x$ and $z$, we see that there exists
$y'\in C'$ such that $\langle Y_{y'z},Y_{xz}\rangle\neq0$.
Therefore, we have proved the claim.

Since $C$ and $C'$ are arbitrary,
the claim shows that, in the Jones graph, $R_3$ is contained in a connected component,
and that every element $(x,z)\in R_1\cup R_2$ is adjacent to an element
of $R_3$. Thus,
$R_1\cup R_2\cup R_3$ is a connected component of the Jones graph. Therefore, $\dim N=2$.
\end{proof}

\subsection*{Acknowledgements}
We are very grateful to Ferenc Sz\"oll\H{o}si and Ada Chan for helpful
discussions on various parts of the paper.
We also thank Doug Leonard for giving us suggestions for
Section~\ref{sec:2}
 and Takao Komatsu for consultation on Pell equations.

\newpage

\appendix
\section{Verification by Magma}

\subsection*{Proof of Lemma \ref{lem:g}}
\begin{verbatim}
F<X,Y,Z>:=FunctionField(Rationals(),3);
g:=X^2+Y^2+Z^2-X*Y*Z-4;
g @ hom<F->F|X/Y+Y/X,X/Z+Z/X,Z/Y+Y/Z> eq 0;
\end{verbatim}

\subsection*{Proof of Lemma \ref{lem:1}}
\begin{verbatim}
F4<X,Y,Z,z>:=FunctionField(Rationals(),4);
f:=z^2-z*X+1;
g:=X^2+Y^2+Z^2-X*Y*Z-4;
w:=(z^2-1)/(z*Z-Y);
ww:=(z^(-2)-1)/(z^(-1)*Z-Y);
w*ww eq 1+(z^2*g+(2*z*X-z*Y*Z+f)*f)/(z*(z*Z-Y)*(z*Y-Z));
\end{verbatim}

\subsection*{Proof of Lemma \ref{lem:h}}
\begin{verbatim}
F<X0,X1,X2,X3>:=FunctionField(Rationals(),4);
x01:=X0/X1+X1/X0;
x02:=X0/X2+X2/X0;
x03:=X0/X3+X3/X0;
x12:=X1/X2+X2/X1;
x13:=X1/X3+X3/X1;
x23:=X2/X3+X3/X2;
hX:=Matrix(F,3,3,[2,x01,x02, x01,2,x12, x03,x13,x23]);
Determinant(hX) eq 0;
\end{verbatim}

\subsection*{Proof of Lemma \ref{lem:3}}
\begin{verbatim}
R<w0,w1,a01,a0j,a1j>:=PolynomialRing(Rationals(),5);
F:=FieldOfFractions(R);
I:=ideal<R|w0^2+w1^2-w0*w1*a01,a01^2+a0j^2+a1j^2-a01*a0j*a1j-4>;
x:=(w1^2-w0^2)/(a1j*w1-a0j*w0);
num:=Numerator(x^2-w0*a0j*x+w0^2);
num in I;
\end{verbatim}
\begin{verbatim}
R<w0,w1,wi,wj,aij>:=PolynomialRing(Rationals(),5);
M:=Matrix(R,3,3,[
2*w0^2,w0^2+w1^2,w0^2+wi^2,
w0^2+w1^2,2*w1^2,w1^2+wi^2,
w0^2+wj^2,w1^2+wj^2,wi*wj*aij]);
Determinant(M) eq -(w1^2-w0^2)^2*(wi*wj*aij-wi^2-wj^2);
\end{verbatim}

\subsection*{Proof of Lemma \ref{lem:w1w2+w3}}
\begin{verbatim}
F<X1,X2,X3>:=FunctionField(Rationals(),3);
x01:=1/X1+X1;
x02:=1/X2+X2;
x03:=1/X3+X3;
x12:=X1/X2+X2/X1;
x13:=X1/X3+X3/X1;
x23:=X2/X3+X3/X2;
(X1*X2*X3+1)*(x01*x02+x03-x12) eq
(X1*X2+X3)*(x01*x02*x03+2-1/2*(x12*x03+x13*x02+x23*x01));
\end{verbatim}

\subsection*{Proof of Theorem \ref{thm:main}}
\begin{verbatim}
d:=3;
d2s:=&cat[[[i,j]:j in [i+1..d]]:i in [0..d-1]];
d2:=[Seqset(s):s in d2s];
R:=PolynomialRing(Rationals(),#d2+2);
q:=R.(#d2+2);
r:=R.(#d2+1);
X:=func<i,j|R.Position(d2,{i,j})>;
g:=func<i,j,k|X(i,j)^2+X(i,k)^2+X(j,k)^2-X(i,j)*X(i,k)*X(j,k)-4>;
h:=func<i,j,k,l|(X(k,l)^2-4)*X(i,j)
 -X(k,l)*(X(k,i)*X(l,j)+X(k,j)*X(l,i))
 +2*(X(k,i)*X(k,j)+X(l,i)*X(l,j))>;

eigenP:=Matrix(R,4,4,[
1,1/2*q^2-q,1/2*q^2,q-2,
1,1/2*q,-1/2*q,-1,
1,-1/2*q+1,-1/2*q,q-2,
1,-1/2*q,1/2*q,-1]);
P:=func<i,j|eigenP[i+1,j+1]>;
n:=&+[P(0,i):i in [0..d]];
n eq q^2-1;

e:=func<i|-n+&+[P(i,j)^2:j in [0..d]]
 +&+[P(i,j[1])*P(i,j[2])*X(j[1],j[2]):j in d2s]>;
s3:=[Setseq(x):x in Subsets({0..d},3)];
eq7:=[g(i[1],i[2],i[3]):i in s3] cat
 [h(0^i,1^i,2^i,3^i):i in Sym({0..d})] cat
 [e(i):i in [1..d]] cat
 [r^2-(17*q-1)*(q-1)];
I:=ideal<R|eq7>;

b1:=X(1,2)-2;
b2:=(q^2-4)*X(1,2)+2*q^2-12;
b3:=q^2*X(1,2)+2*q^2-4;
a12D:=q^2*(q+1)*(q-3);
a12Nplus:= 2*(-q^4+2*q^3+4*q^2-10*q+1 +(q-1)*r);
a12Nminus:=2*(-q^4+2*q^3+4*q^2-10*q+1 -(q-1)*r);
b4plus:=a12D*X(1,2)-a12Nplus;
b4minus:=a12D*X(1,2)-a12Nminus;
q*(q-1)*b1*b2*b3*b4plus*b4minus in I;

I1:=ideal<R|I,b1>;
c1:=X(0,1)+n-2;
a01D:=q^2-2*q-1;
a01N:=q^3-3*q^2-q+7;
c2:=a01D*X(0,1)-a01N;
X(0,3)+n-2 in I1;
(q-1)^2*c1*c2 in I1;
I11:=ideal<R|I1,c1>;
(q-1)*(q+1)*(q-2)*(q^2-5)*(X(1,3)-2) in I11;

I12:=ideal<R|I1,c2>;
a13D:=q^2-2*q-1;
a13N:=-(q^3-q^2-q-3);
-a13N+2*a13D eq (q+1)*(q^2-5);
-a13N-2*a13D eq (q-1)^3;
(q-1)*(q+1)*(q-2)*(q^2-5)*(a13D*X(1,3)-a13N) in I12;
a01:=a01N/a01D;
a03:=-(n-2);
a13:=a13N/a13D;
FA<w3>:=PolynomialRing(FieldOfFractions(R));
FA3:=FA/ideal<FA|w3^2+1-a03*w3>;
QA3:=FieldOfFractions(FA3);
w1:=QA3!((w3^2-1)/(a13*w3-a01));
w1 eq QA3!((-(q-3)*w3+(q-1))/(q^2-2*q-1));

I2:=ideal<R|I,b2>;
q*(q-1)*(q+1)*(q-3)*(n-4)*(X(0,2)+2) in I2;
q*(q-1)*(q+1)*(q-3)*(n-4)*(X(1,3)-2) in I2;
q*(q-1)*(q+1)*(q-3)*(n-4)*((n-3)*X(0,1)-2*(n-5)) in I2;

I3:=ideal<R|I,b3>;
c4:=q^2*X(0,2)+2*(n-1);
c5:=q*X(0,2)+2;
X(0,3)-2 in I3;
(q-1)^2*c4*c5 in I3;

I31:=ideal<R|I3,c4>;
(q-1)*(q+1)*(q-2)*(X(0,1)-2) in I31;

I32:=ideal<R|I3,c5>;
(q-1)*(q+1)*(q-2)*(q*X(0,1)+2) in I32;
q^2*X(1,2)+2*(n-1) in I32;
a02:=-2/q;
a01:=-2/q;
a12:=-2*(n-1)/q^2;
a01*a02-a12 eq 2;

I4:=ideal<R|I,b4plus>;
a01D:=2*q*(q+1);
a01N:=-(q-1)*(q-2)+(q+2)*r;
a01:=a01D*X(0,1)-a01N;
a02D:=2*q*(q-3);
a02N:=(q+2)*(q-1)-(q-2)*r;
a02:=a02D*X(0,2)-a02N;
a03D:=2*(q+1)*(q-3);
a03N:=5*q^2-2*q-19-(q-1)*r;
a03:=a03D*X(0,3)-a03N;
a12D:=q^2*(q+1)*(q-3);
a12N:=2*(-q^4+2*q^3+4*q^2-10*q+1+(q-1)*r);
a12:=a12D*X(1,2)-a12N;
a13D:=a02D;
a13N:=-a02N;
a13:=a13D*X(1,3)-a13N;
a23D:=a01D;
a23N:=-a01N;
a23:=a23D*X(2,3)-a23N;
q*(q+1)*(q-1)^2*(q-3)^2*(q^2-5)*a01 in I4;
q*(q+1)^2*(q-1)^2*(q-3)*(q^2-5)*a02 in I4;
a03 in I4;
a12 in I4;
q*(q+1)^2*(q-1)^2*(q-3)*(q^2-5)*a13 in I4;
q*(q+1)*(q-1)^2*(q-3)^2*(q^2-5)*a23 in I4;
b4pp:=Evaluate(b4plus,[0,0,0,2,0,0,r,q]);
(b4pp+2*(q-1)*r)^2-4*(q-1)^2*(17*q-1)*(q-1) eq
 16*q^2*(q+1)*(q-1)^2*(q-3)*(q^2-5);
b4pm:=Evaluate(b4plus,[0,0,0,-2,0,0,r,q]);
(b4pm+2*(q-1)*r)^2-4*(q-1)^2*(17*q-1)*(q-1) eq
 -64*q^2*(q+1)*(q-3);
q^2*(q+1)*(q-1)^2*(q-3)*(q^2-5)*(X(0,1)*X(0,2)+X(0,3)-X(1,2)) in I4;
I41:=ideal<R|I4,X(0,1)*X(0,2)*X(0,3)+2
 -1/2*(X(1,2)*X(0,3)+X(1,3)*X(0,2)+X(2,3)*X(0,1))>;
q*(q-1)^2*(q^2-5)^2 in I41;
\end{verbatim}

Conversely,
\begin{verbatim}
Rqr<rr,qq>:=PolynomialRing(Rationals(),2);
F:=FieldOfFractions(Rqr/ideal<Rqr|rr^2-(17*qq-1)*(qq-1)>);
d:=3;
d2s:=&cat[[[i,j]:j in [i+1..d]]:i in [0..d-1]];
d2:=[Seqset(s):s in d2s];
q:=F!qq;
r:=F!rr;
R:=PolynomialRing(F,#d2);
X:=func<i,j|R.Position(d2,{i,j})>;
g:=func<i,j,k|X(i,j)^2+X(i,k)^2+X(j,k)^2-X(i,j)*X(i,k)*X(j,k)-4>;
h:=func<i,j,k,l|(X(k,l)^2-4)*X(i,j)
 -X(k,l)*(X(k,i)*X(l,j)+X(k,j)*X(l,i))
 +2*(X(k,i)*X(k,j)+X(l,i)*X(l,j))>;
eigenP:=Matrix(F,4,4,[
 1,1/2*q^2-q,1/2*q^2,q-2,
 1,1/2*q,-1/2*q,-1,
 1,-1/2*q+1,-1/2*q,q-2,
 1,-1/2*q,1/2*q,-1]);
P:=func<i,j|eigenP[i+1,j+1]>;
n:=&+[P(0,i):i in [0..d]];
n eq q^2-1;
e:=func<i|-n+&+[P(i,j)^2:j in [0..d]]
 +&+[P(i,j[1])*P(i,j[2])*X(j[1],j[2]):j in d2s]>;
s3:=[Setseq(x):x in Subsets({0..d},3)];
eq7rq:=[g(i[1],i[2],i[3]):i in s3] cat
 [h(0^i,1^i,2^i,3^i):i in Sym({0..d})] cat
 [e(i):i in [1..d]];
\end{verbatim}

\begin{verbatim}
subs1:=[-n+2,-n+2,-n+2,2,2,2];
&and[Evaluate(f,subs1) eq 0:f in eq7rq];
\end{verbatim}

\begin{verbatim}
a:=-(q-3)/(q^2-2*q-1);
b:=(q-1)/(q^2-2*q-1);
a^2+a*b*(-(n-2))+b^2 eq 1;
a01:=(q^3-3*q^2-q+7)/(q^2-2*q-1);
a*(-(n-2))+2*b eq a01;
a13:=(-q^3+q^2+q+3)/(q^2-2*q-1);
2*a-b*(n-2) eq a13;
subs2:=[a01,a01,-(n-2),2,a13,a13];
&and[Evaluate(f,subs2) eq 0:f in eq7rq];
\end{verbatim}

\begin{verbatim}
a01_iii:=2*(n-5)/(n-3);
subs3:=[a01_iii,-2,a01_iii,-a01_iii,2,-a01_iii];
&and[Evaluate(f,subs3) eq 0:f in eq7rq];
\end{verbatim}

\begin{verbatim}
a02_iv:=-2*(n-1)/(n+1);
subs4:=[2,a02_iv,2,a02_iv,2,a02_iv];
&and[Evaluate(f,subs4) eq 0:f in eq7rq];
\end{verbatim}

\begin{verbatim}
subs5:=[-2/q,-2/q,2,-2*(n-1)/(n+1),-2/q,-2/q];
&and[Evaluate(f,subs5) eq 0:f in eq7rq];
\end{verbatim}

\begin{verbatim}
a01D:=2*q*(q+1);
a01N:=-(q-1)*(q-2)+(q+2)*r;
a01:=a01N/a01D;
a02D:=2*q*(q-3);
a02N:=(q+2)*(q-1)-(q-2)*r;
a02:=a02N/a02D;
a03D:=2*(q+1)*(q-3);
a03N:=5*q^2-2*q-19-(q-1)*r;
a03:=a03N/a03D;
a12D:=q^2*(q+1)*(q-3);
a12N:=2*(-q^4+2*q^3+4*q^2-10*q+1+(q-1)*r);
a12:=a12N/a12D;
a13D:=a02D;
a13N:=-a02N;
a13:=a13N/a13D;
a23D:=a01D;
a23N:=-a01N;
a23:=a23N/a23D;
subs6:=[a01,a02,a03,a12,a13,a23];
&and[Evaluate(f,subs6) eq 0:f in eq7rq];
\end{verbatim}

\subsection*{Proof of Corollary \ref{cor}}
\begin{verbatim}
a01-2 eq -8*q*(q+1)*(q-3)^2/(2*q*(q+1)*(5*q^2+q+2+(q+2)*r));
a01+2 eq -8*q*(q+1)*(q^2-5)/(2*q*(q+1)*(3*q^2+7*q-2-(q+2)*r));
\end{verbatim}

\subsection*{Isolation}
\begin{verbatim}
n:=15;
A0:=ScalarMatrix(n,1);
J:=Parent(A0)![1:i in [1..n^2]];
LO3:=LineGraph(OddGraph(3));
A1:=AdjacencyMatrix(LO3);
A2:=A1^2-A1-4*A0;
A3:=J-A0-A1-A2;
DM:=DistanceMatrix(LO3);
DM eq A1+2*A2+3*A3;

hermitianConjugate:=
 func<H|Parent(H)![ComplexConjugate(x):x in Eltseq(Transpose(H))]>;

complexHadamard:=function(xyz)
 AA:=[ChangeRing(A,Parent(xyz[1])):A in [A1,A2,A3]];
 return A0+xyz[1]*AA[1]+xyz[2]*AA[2]+xyz[3]*AA[3];
end function;

spanCondition:=function(H)
 F:=Parent(H[1,1]);
 MnF:=Parent(H);
 n:=Nrows(H);
 Es:=[MnF|0:i in [1..n]];
 for i in [1..n] do
  Es[i][i,i]:=1;
 end for;
 EsF:=[MnF|e:e in Es];
 Hs:=hermitianConjugate(H);
 Vn:=VectorSpace(F,n^2);
 bracket:=sub<Vn|[Vn|Eltseq(v*Hs*w*H-Hs*w*H*v):v,w in EsF]>;
 return Dimension(bracket) eq n^2-2*n+1;
end function;

F<s>:=QuadraticField(-15);
y:=(-7+s)/8;
H:=complexHadamard([1,y,1]);
H*hermitianConjugate(H) eq n*A0;
spanCondition(H);

F<s>:=QuadraticField(-11);
x:=(5+s)/6;
H:=complexHadamard([x,-1,x]);
H*hermitianConjugate(H) eq n*A0;
not spanCondition(H);

F<s>:=QuadraticField(-15);
x:=(-1+s)/4;
H:=complexHadamard([x,x^(-1),1]);
H*hermitianConjugate(H) eq n*A0;
not spanCondition(H);

F<s>:=QuadraticField(201);
Z:=(53-3*s)/10;
R<T>:=PolynomialRing(F);
K<z>:=ext<F|T^2-Z*T+1>;
z+1/z eq Z;
x:=1/144*((-5*Z+31)*z-25*Z+155);
xb:=1/144*((-5*Z+31)*z^(-1)-25*Z+155);
y:=1/144*((25*Z-155)*z+5*Z-31);
yb:=1/144*((25*Z-155)*z^(-1)+5*Z-31);
x*xb eq 1;
y*yb eq 1;
H:=complexHadamard([x,y,z]);
H*hermitianConjugate(H) eq n*A0;
spanCondition(H);
\end{verbatim}

\subsection*{Remark \ref{rem:1}}
\begin{verbatim}
mustbe0:=func<a01v|(2*q*(q+1)*a01v+(q-1)*(q-2))^2-((q+2)*r)^2>;
mustbe0(0) eq -8*q*(q+1)*(q-1)*(2*q+7);
mustbe0(1) eq -8*q*(q+1)*(q^2+6*q-8);
mustbe0(-1) eq -8*q*(q+1)*(2*q^2+3*q-6);
mustbe0(2) eq 8*q*(q+1)*(q-3)^2;
mustbe0(-2) eq -8*q*(q+1)*(q^2-5);
\end{verbatim}

\subsection*{Table \ref{tab:1}}
\begin{verbatim}
HWminus1:=function(w)
 I3:={1..3};
 H3q:={w[i1]*w[i2]/w[i3]:i1,i2,i3 in I3
   |#[i:i in [i1,i2,i3]|i eq 3] ne 2};
 H3q4:={w[i1]*w[i2]/w[i3]:i1,i2,i3 in I3
   |#[i:i in [i1,i2,i3]|i eq 3] ne 2 and {i1,i2,i3} ne {1,3}};
 plus:=[{w[i]^2:i in I3} join H:H in [H3q,H3q4]];
 return {(p join {x^(-1):x in p} join 
  {w[i1]*w[i2]/(w[j1]*w[j2]):i1,i2,j1,j2 in I3})
  diff {1}:p in plus};
end function;
Rw<w1,w2,w3>:=FunctionField(Rationals(),3);
HWminus1([w1,w1,w1]) eq 
 {&join{{w1^s,w1^(s*2)}:s in {1,-1}}};
HWminus1([w1,w1,w3]) eq 
 {&join{{w^s,w^(s*2)}:s in {1,-1},w in {w1,w3}} join
 &join{{(w1^2/w3)^s,(w3/w1)^s,(w3/w1)^(s*2)}:s in {1,-1}}};
HWminus1([w1,-1,w1]) eq {{-1} join 
 &join{{s1*w1^s,s1*w1^(s*2)}:s,s1 in {1,-1}}};
HWminus1([1,w2,1]) eq {&join{{w2^s,w2^(s*2)}:s in {1,-1}}};
HWminus1([w1,w1^(-1),1]) eq {{w1^(s*k):s in {1,-1},k in {1..4}}};
HWminus1([w1,w2,-w1*w2]) eq {{-1} join
 {s0*w^(s*k):w in {w1,w2},s,s0 in {1,-1},k in {1,2}} join
 &join{{s0*w1^s1*w2^s2,(w1^s1*w2^s2)^2}:s0,s1,s2 in {1,-1}}
 join &join{{s0*(w1^2*w2^(-1))^s,s0*(w1^(-1)*w2^2)^s}
  :s,s0 in {1,-1}}};
// (i)
Rq<q>:=FunctionField(Rationals());
(-q^2+3)^2-2 eq q^4-6*q^2+7;
// (ii)
Rw3<w3>:=FunctionField(Rq);
A:=-(q-3)/(q^2-2*q-1);
B:=(q-1)/(q^2-2*q-1);
(A^2+B^2-1)/(A*B) eq q^2-3;
w1:=A*w3+B;
(A/w3+B)-1/w1 eq 1/w1*A*B*(w3+1/w3+(q^2-3));
// (iii)
(2*(q^2-6)/(q^2-4))^2-2 eq 2*(q^4-16*q^2+56)/(q^2-4)^2;
// (iv)
(-2*(q^2-2)/q^2)^2-2 eq 2*(q^4-8*q^2+8)/q^4;
\end{verbatim}

\subsection*{Proof of Proposition \ref{prop:1}}
(iii)$\not\cong$(vi)
\begin{verbatim}
Rq<q>:=FunctionField(Rationals());
k3a:=(q^2-6)/(q^2-4);
k3b:=2*(q^4-16*q^2+56)/(q^2-4)^2;
ra1:=(k3a+(q-1)*(q-2)/(2*q*(q+1)))/(q+2);
fac:=Factorization(Numerator(ra1^2-(17*q-1)*(q-1)));
#fac eq 1 and Degree(fac[1][1]) gt 1;
ra2:=(-k3a+(q-1)*(q-2)/(2*q*(q+1)))/(q+2);
fac:=Factorization(Numerator(ra2^2-(17*q-1)*(q-1)));
#fac eq 1 and Degree(fac[1][1]) gt 1;
rb1:=(k3b+(q-1)*(q-2)/(2*q*(q+1)))/(q+2);
fac:=Factorization(Numerator(rb1^2-(17*q-1)*(q-1)));
#fac eq 1 and Degree(fac[1][1]) gt 1;
rb2:=(-k3b+(q-1)*(q-2)/(2*q*(q+1)))/(q+2);
fac:=Factorization(Numerator(rb2^2-(17*q-1)*(q-1)));
#fac eq 1 and Degree(fac[1][1]) gt 1;
\end{verbatim}

(i)$\not\cong$(ii)
\begin{verbatim}
n:=q^2-1;
Numerator((2-n)-(q^3-3*q^2-q+7)/(q^2-2*q-1))
 eq -(q-2)*(q+1)*(q^2-5);
Numerator((n^2-4*n+2)-(q^3-3*q^2-q+7)/(q^2-2*q-1))
 eq (q-2)*(q+1)^2*(q^3-2*q^2-4*q+7);
\end{verbatim}

(iv)$\not\cong$(v)
\begin{verbatim}
Numerator(-2*(n-1)/(n+1)-(-2)/q)
 eq -2*(q-2)*(q+1);
Numerator(2*(n^2-6*n+1)/(n+1)^2-(-2)/q)
 eq 2*(q-2)*(q+1)*(q^2+2*q-4);
\end{verbatim}

\subsection*{Proof of Proposition \ref{prop:2}}

\begin{verbatim}
ra1:=(2+(q-1)*(q-2)/(2*q*(q+1)))/(q+2);
fac:=Factorization(Numerator(ra1^2-(17*q-1)*(q-1)));
#fac eq 1 and Degree(fac[1][1]) gt 1;
ra2:=(-2+(q-1)*(q-2)/(2*q*(q+1)))/(q+2);
fac:=Factorization(Numerator(ra2^2-(17*q-1)*(q-1)));
#fac eq 1 and Degree(fac[1][1]) gt 1;
\end{verbatim}

\subsection*{Proof of Lemma \ref{lem:NS}}
\begin{verbatim}
R<r,q>:=PolynomialRing(Rationals(),2);
F:=FieldOfFractions(R);
n:=q^2-1;
fr:=r^2-(17*q-1)*(q-1);
B1:=Matrix(R,4,4,[
0,1,0,0,
q^2/2-q,(q-2)^2/4,(q-2)^2/4,q*(q-4)/4,
0,q*(q-2)/4,q*(q-2)/4,q^2/4,
0,(q-4)/2,(q-2)/2,0]);

B2:=Matrix(R,4,4,[
0,0,1,0,
0,q*(q-2)/4,q*(q-2)/4,q^2/4,
q^2/2,q^2/4,q^2/4,q^2/4,
0,q/2,(q-2)/2,0]);

B3:=Matrix(R,4,4,[
0,0,0,1,
0,(q-4)/2,(q-2)/2,0,
0,q/2,(q-2)/2,0,
q-2,0,0,q-3]);

B0:=ScalarMatrix(4,R!1);
BB:=[B0,B1,B2,B3];
pijk:=func<i,j,k|BB[i+1][j+1,k+1]>;

isSymNbas:=function(ajk)
 aijs:=[[ajk[1],ajk[2],ajk[3]],
  [1,ajk[4],ajk[5]],[1,1,ajk[6]]];
 aij:=func<i,j|aijs[i+1,j]>;
 ff:=[F|&+[pijk(j,k,i)*(aij(j,k)^2-2):j,k in [0..3]|j lt k]
  +&+[pijk(j,j,i):j in [0..3]]:i in [1..3]];
 II:=[ideal<R|fr,Numerator(ff[k])>:k in [1..3]];
 return [Basis(EliminationIdeal(II[k],{q})):k in [1..3]];
end function;

// case (i)
subs1:=[-n+2,-n+2,-n+2,2,2,2];
bass:=isSymNbas(subs1);
[Factorization(bass[i][1]):i in [1..3]] eq 
 [[<q-2,1>,<q-1,1>,<q+1,1>,<q+2,1>]:i in [1..3]];

// case (ii)
a01:=(q^3-3*q^2-q+7)/(q^2-2*q-1);
a13:=(-q^3+q^2+q+3)/(q^2-2*q-1);
subs2:=[a01,a01,-(n-2),2,a13,a13];
bass:=isSymNbas(subs2);
b1:=q^4-10*q^2+4*q+17;
fac:= [[<q-2,1>,<q-1,1>,<q+1,1>,<b1,1>],
 [<q-2,1>,<q-1,1>,<q+1,1>,<b1,1>],
 [<q-2,1>,<q-1,1>,<q+1,1>,<q+2,1>]];
[Factorization(bass[i][1]):i in [1..3]] eq fac;

// case (iii)
a01_iii:=2*(n-5)/(n-3);
subs3:=[a01_iii,-2,a01_iii,-a01_iii,2,-a01_iii];
bass:=isSymNbas(subs3);
b1:=q^6-13*q^4+28*q^2+64;
b2:=q^4-9*q^2+24;
[Factorization(bass[i][1]):i in [1..3]] eq
 [[<b1,1>],[<b2,1>],[<b1,1>]];

// case (iv)
a02_iv:=-2*(n-1)/(n+1);
subs4:=[2,a02_iv,2,a02_iv,2,a02_iv];
bass:=isSymNbas(subs4);
bass eq [[(q-2)*(q-1)*(q+1)*(q+2)]:i in [1..3]];

// case (v)
subs5:=[-2/q,-2/q,2,-2*(n-1)/(n+1),-2/q,-2/q];
bass:=isSymNbas(subs5);
b1:=(q-2)*(q-1)*(q+1)*(q^2-2*q-4);
b3:=(q-2)*(q-1)*(q+1)*(q+2);
bass eq [[b1],[b1],[b3]];

// case (vi)
a01D:=2*q*(q+1);
a01N:=-(q-1)*(q-2)+(q+2)*r;
a01:=a01N/a01D;
a02D:=2*q*(q-3);
a02N:=(q+2)*(q-1)-(q-2)*r;
a02:=a02N/a02D;
a03D:=2*(q+1)*(q-3);
a03N:=5*q^2-2*q-19-(q-1)*r;
a03:=a03N/a03D;
a12D:=q^2*(q+1)*(q-3);
a12N:=2*(-q^4+2*q^3+4*q^2-10*q+1+(q-1)*r);
a12:=a12N/a12D;
a13:=-a02;
a23:=-a01;
bass:=isSymNbas([a01,a02,a03,a12,a13,a23]);
bass eq [
 [q^3*(q-3)^2*(q-1)*(q+1)^2*
  (q^9-q^8-12*q^7+14*q^6+49*q^5
  +51*q^4-894*q^3-464*q^2+4664*q-272)],
 [q^3*(q-3)^2*(q-2)*(q-1)*(q+1)^2*
  (q^7+3*q^6-4*q^5+2*q^4+57*q^3-q^2-86*q+92)],
 [q^2*(q-3)^2*(q-1)*(q+1)^2*
  (q^8-2*q^7+66*q^5-273*q^4-288*q^3+1344*q^2-288*q+16)]];
\end{verbatim}

\subsection*{Proof of Proposition \ref{prop:N2}}
\begin{verbatim}
R<c111,c112,c121,c122,c211,c212,c221,c222,
 w1,w2case4,w3case2,r,q>
 :=PolynomialRing(Rationals(),13);
varname:=[[i,j,k]:i,j,k in [1,2]];
c:=func<i,j,k|R.Position(varname,[i,j,k])>;
fr:=r^2-(17*q-1)*(q-1);
w0:=1;

B1:=Matrix(R,4,4,[
0,1,0,0,
q^2/2-q,(q-2)^2/4,(q-2)^2/4,q*(q-4)/4,
0,q*(q-2)/4,q*(q-2)/4,q^2/4,
0,(q-4)/2,(q-2)/2,0]);

B2:=Matrix(R,4,4,[
0,0,1,0,
0,q*(q-2)/4,q*(q-2)/4,q^2/4,
q^2/2,q^2/4,q^2/4,q^2/4,
0,q/2,(q-2)/2,0]);

B3:=Matrix(R,4,4,[
0,0,0,1,
0,(q-4)/2,(q-2)/2,0,
0,q/2,(q-2)/2,0,
q-2,0,0,q-3]);

B0:=ScalarMatrix(4,R!1);
BB:=[B0,B1,B2,B3];
pijk:=func<i,j,k|BB[i+1][j+1,k+1]>;

cijk:=function(i,j,k)
 if 0 in {i,j,k} then
  if [i,j,k] in {[0,3,3],[3,0,3],[3,3,0]} then
   return 1;
  else 
   return 0;
  end if;
 elif 3 in {i,j,k} then
  if {3} eq {i,j,k} then
   return pijk(3,3,3)-1;
  else
   return 0;
  end if;
 else  
  return c(i,j,k);
 end if;
end function; 

fx:=[cijk(1,j,k)+cijk(2,j,k)-pijk(j,k,3):j,k in [1,2]];
fy:=[cijk(j,1,k)+cijk(j,2,k)-pijk(j,k,3):j,k in [1,2]];
fz:=[cijk(j,k,1)+cijk(j,k,2)-pijk(j,k,3):j,k in [1,2]];
fxyz:=fx cat fy cat fz;

test:=function(fa,ww)
 alpha:=func<i|ww[i+1]>;
 ff:=&+[cijk(i,j,k)*alpha(i)^2/(alpha(j)*alpha(k))
 :i,j,k in [0..3]];
 gg:=&+[cijk(i,j,k)*alpha(j)*alpha(k)/alpha(i)^2
 :i,j,k in [0..3]];
 I:=ideal<R|[fr,fa,Numerator(ff),Numerator(gg)] cat fxyz>;
 return Basis(EliminationIdeal(I,{q}));
end function;

test2:=function(fa,ww)
 alpha:=func<i|ww[i+1]>;
 t1:=&+[pijk(i,j,1)*pijk(3,k,i)*alpha(i)^2/(alpha(j)*alpha(k))
  :i,j,k in [0..3]]; 
 t2:=&+[pijk(i,j,2)*pijk(3,k,i)*alpha(i)^2/(alpha(j)*alpha(k))
  :i,j,k in [0..3]]; 
 I1:=ideal<R|[fr,fa,Numerator(t1)]>;
 I2:=ideal<R|[fr,fa,Numerator(t2)]>;
 return [Basis(EliminationIdeal(I1,{q})),
  Basis(EliminationIdeal(I1,{q}))];
end function;

// case (i)
fa1:=w1^2+(q^2-3)*w1+1;
ww:=[w0,w1,w1,w1];
test(fa1,ww) eq [(q+1)*(q-1)*(q^2-5)];
test2(fa1,ww) eq [[(q-2)*(q+1)*(q-1)*(q^2-5)]:i in [1,2]];

// case (ii)
fa3:=w3case2^2+(q^2-3)*w3case2+1;
w2:=(-(q-3)*w3case2+q-1)/(q^2-2*q-1);
ww:=[w0,w2,w2,w3case2];
test(fa3,ww) eq [(q+1)*(q-1)*(q^2-5)];
test2(fa3,ww) eq 
 [[(q-2)*(q+1)*(q-1)*(q^2-5)*(q^2-2*q-1)^2]:i in [1,2]];

// case (iii)
fa1:=R!((q^2-4)*(w1^2-2*(q^2-6)/(q^2-4)*w1+1));
ww:=[w0,w1,-1,w1];
test(fa1,ww) eq [(q^2+4)*(q^2-5)];
test2(fa1,ww) eq 
 [[(q-2)*(q+1)*(q^2-5)*(q^5-5*q^4+12*q^3-24*q^2+64)]:i in [1,2]];

// case (iv)
fa2:=R!(q^2*(w2case4^2+2*(q^2-2)/q^2*w2case4+1));
ww:=[w0,1,w2case4,1];
test(fa2,ww) eq [q^2*(q+1)*(q-1)];
test2(fa2,ww) eq [[q^2*(q-2)*(q+1)*(q-1)]:i in [1,2]];

// case (v)
fa1:=R!(q*(w1^2+2/q*w1+1));
ww:=[w0,w1,1/w1,1];
test(fa1,ww) eq [q^2*(q+1)*(q-1)];
test2(fa1,ww) eq [[q*(q-2)*(q+1)*(q-1)]:i in [1,2]];

// case (vi)
a01D:=2*q*(q+1);
a01N:=-(q-1)*(q-2)+(q+2)*r;
a01:=a01N/a01D;
a02D:=2*q*(q-3);
a02N:=(q+2)*(q-1)-(q-2)*r;
a02:=a02N/a02D;
a03D:=2*(q+1)*(q-3);
a03N:=5*q^2-2*q-19-(q-1)*r;
a03:=a03N/a03D;
a12D:=q^2*(q+1)*(q-3);
a12N:=2*(-q^4+2*q^3+4*q^2-10*q+1+(q-1)*r);
a12:=a12N/a12D;
a13:=-a02;
a23:=-a01;
fa1:=a01D*w1^2-a01N*w1+a01D;
w2:=(a01*w1-2)/(a12*w1-a02);
w3:=(a01*w1-2)/(a13*w1-a03);
ww:=[w0,w1,w2,w3];
test(fa1,ww) eq [q^12*(q-3)^12*(q-1)*(q+1)^6*(q-1/9)*(q^2-5)^3];
test2(fa1,ww) eq [[q^9*(q-3)^12*(q-1)^2*(q+1)^7*(q^2-5)^4*
 (q^4+6/19*q^3-48/19*q^2+8/19*q+16/19)]:i in [1,2]];
\end{verbatim}

\newpage
\section{Detailed proof of Remark~\ref{rem:2}}

\begin{lem}\label{lem:P1}
Let $d$ be a square-free positive integer, and let $u,v$ be positive integers
satisfying $u^2-dv^2=1$ and $u>1$. Let $a$ be a positive integer.
If $x$ and
$y$ are positive integers satisfying $x^2-dy^2=a$ and $x>\sqrt{a(u+1)/2}$, 
then there exist
integers $x_0,y_0,n$ with $0<x_0\leq\sqrt{a(u+1)/2}$ such that
\begin{equation}\label{Pe1}
x+\sqrt{d}y=(u+\sqrt{d}v)^n(x_0+\sqrt{d}y_0).
\end{equation}
\end{lem}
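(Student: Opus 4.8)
The plan is to carry out the classical reduction for norm-form (Pell-type) equations inside the ring $\Z[\sqrt d]$, folding a large solution down into a fundamental window by dividing out a suitable power of the unit $\epsilon:=u+\sqrt d\,v$. Write $\bar\epsilon:=u-\sqrt d\,v$ for the conjugate, so that $\epsilon\bar\epsilon=u^2-dv^2=1$; since $u,v\ge1$ and $u>1$ we have $\epsilon>1$, hence $\bar\epsilon=\epsilon^{-1}\in(0,1)$ and $\epsilon+\epsilon^{-1}=2u$. Put $\alpha:=x+\sqrt d\,y$ with conjugate $\bar\alpha:=x-\sqrt d\,y$, so $\alpha\bar\alpha=x^2-dy^2=a$. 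From $x,y>0$ we get $\alpha>0$, and from $x^2>dy^2$ together with $x,y>0$ we get $x>\sqrt d\,y$, hence also $\bar\alpha>0$.

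First I would choose the reduction exponent. As $n$ runs over $\Z$ the positive real $(\alpha/\bar\alpha)\,\epsilon^{-2n}$ runs over a geometric progression with common ratio $\epsilon^{-2}<1$, and $[\epsilon^{-1},\epsilon)$ has multiplicative length $\epsilon^2$ matching that gap, so there is a unique $n\in\Z$ with
\[
\frac{1}{\epsilon}\ \le\ \frac{\alpha}{\bar\alpha}\,\epsilon^{-2n}\ <\ \epsilon .
\]
Define $x_0,y_0\in\Z$ by $x_0+\sqrt d\,y_0:=\alpha\,\bar\epsilon^{\,n}$; this element lies in $\Z[\sqrt d]$ for every $n\in\Z$ because $\bar\epsilon^{\,n}$ does (using $\bar\epsilon^{-1}=\epsilon\in\Z[\sqrt d]$) and $\{1,\sqrt d\}$ is a $\Z$-basis. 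Writing $\beta:=x_0+\sqrt d\,y_0=\alpha\epsilon^{-n}$ and $\bar\beta:=x_0-\sqrt d\,y_0=\bar\alpha\epsilon^{\,n}$, we obtain $\beta\bar\beta=\alpha\bar\alpha=a$, i.e.\ $x_0^2-dy_0^2=a$, and $\beta,\bar\beta>0$ since $\alpha,\bar\alpha,\epsilon>0$. Multiplying $\beta=\alpha\epsilon^{-n}$ by $\epsilon^{\,n}$ gives exactly $x+\sqrt d\,y=(u+\sqrt d\,v)^n(x_0+\sqrt d\,y_0)$, which is (\ref{Pe1}).

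It then remains to bound $x_0=(\beta+\bar\beta)/2$. Since $\beta,\bar\beta>0$ with $\beta\bar\beta=a$, AM--GM gives $x_0\ge\sqrt a>0$. For the upper bound put $t:=\beta/\bar\beta=(\alpha/\bar\alpha)\,\epsilon^{-2n}\in[\epsilon^{-1},\epsilon)$, so that $\beta=\sqrt{at}$, $\bar\beta=\sqrt{a/t}$ and $x_0=\tfrac12\sqrt a\,(\sqrt t+1/\sqrt t)$. The function $g(t)=\sqrt t+1/\sqrt t$ is invariant under $t\mapsto1/t$ and increasing on $[1,\infty)$, so on $[\epsilon^{-1},\epsilon)$ it is maximized at the included endpoint $t=\epsilon^{-1}$, where $g(\epsilon^{-1})=g(\epsilon)$; and $g(\epsilon)^2=\epsilon+2+\epsilon^{-1}=2(u+1)$. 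Hence $x_0\le\tfrac12\sqrt a\,\sqrt{2(u+1)}=\sqrt{a(u+1)/2}$, as claimed. (The hypothesis $x>\sqrt{a(u+1)/2}$ is not actually needed for the bare existence statement; combined with the bound just proved it guarantees $x_0\ne x$, hence $n\ne0$, so that (\ref{Pe1}) is a genuine reduction --- here one uses that $\sqrt d$ is irrational, which holds since $d=1$ would force $v=0$, contrary to $v\ge1$.)

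The computations above are elementary; the one point that requires care is the calibration of the fundamental window. A cruder normalization --- for instance requiring $\beta/\bar\beta\in[1,\epsilon^2)$ --- would only yield the weaker bound $x_0\le\sqrt a\,u$, so one must use the symmetric interval $[\epsilon^{-1},\epsilon)$ and track which endpoint is included in order to reach the sharp constant $\sqrt{a(u+1)/2}$ with a non-strict inequality.
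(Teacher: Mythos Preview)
Your argument is correct. You work directly in $\Z[\sqrt d]$: divide $\alpha=x+\sqrt d\,y$ by the appropriate power of the unit $\epsilon=u+\sqrt d\,v$ so that the ratio $\beta/\bar\beta$ lands in the fundamental interval $[\epsilon^{-1},\epsilon)$, and then read off the bound $x_0\le\sqrt{a(u+1)/2}$ from $g(\epsilon)^2=\epsilon+\epsilon^{-1}+2=2(u+1)$. This is a clean global construction and, as you note, it works for every positive solution, not only those above the threshold.

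The paper argues instead by descent. Assuming a minimal counterexample $x>\sqrt{a(u+1)/2}$, it forms $x_1=ux-dvy$, $y_1=|uy-vx|$, checks the elementary inequalities $0<x_1<x$ directly from $x>\sqrt{a(u+1)/2}$, observes $x_1^2-dy_1^2=a$, and then either $x_1\le\sqrt{a(u+1)/2}$ or the minimality hypothesis applies to $(x_1,y_1)$; in either case the factorization~(\ref{Pe1}) follows, contradicting the choice of $x$. This is the same reduction one step at a time rather than all at once: multiplying by $\bar\epsilon$ is exactly $(x,y)\mapsto(ux-dvy,uy-vx)$. Your route is shorter and more structural; the paper's route makes the single descent step completely explicit and avoids any discussion of fundamental domains or the function $g(t)=\sqrt t+1/\sqrt t$. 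Both yield the same sharp constant.
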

\begin{proof}
Suppose that the statement is false, and choose the minimal
counterexample $x>\sqrt{a(u+1)/2}$ with $x^2-dy^2=a$, not 
expressible in the form (\ref{Pe1}). Then
$2x^2>a(u+1)$, so $\frac{a}{x^2}<\frac{2}{u+1}$.
Thus
\begin{align*}
u^2x^2&>(u^2-1)(x^2-a)=d^2v^2y^2
\nexteq
(u-1)^2x^2\frac{u+1}{u-1}(1-\frac{a}{x^2})
\\&>
(u-1)^2x^2\frac{u+1}{u-1}(1-\frac{2}{u+1})
\nexteq
(u-1)^2x^2.
\end{align*}
This implies $ux>dvy>(u-1)x$, or equivalently,
\[
x>ux-dvy>0.
\]
Set
\begin{align*}
x_1&=ux-dvy,\\
y_1&=|uy-vx|.
\end{align*}
Then $x>x_1>0$ and $y_1\geq0$.
\begin{align}
x_1\pm\sqrt{d}y_1&=
ux-dvy\pm\sqrt{d}(uy-vx)
\nnexteq
(u\mp\sqrt{d}v)(x\pm\sqrt{d}y),
\label{Pe3}
\end{align}
and so
\begin{align*}
x_1^2-dy_1^2&=
(u^2-dv^2)(x-dy^2)
\nexteq
a.
\end{align*}
Note that if $y_1=0$, then $x_1=\sqrt{a}$, so
\begin{equation}\label{Pe5}
x_1\leq\sqrt{a(u+1)/2}.
\end{equation}
Since $x$ is the minimal counterexample, we have either
(\ref{Pe5}), or there exist
integers $x_0,y_0,n$ with $0<x_0\leq\sqrt{a(u+1)/2}$ such that
\begin{equation}\label{Pe2}
x_1+\sqrt{d}y_1=(u+\sqrt{d}v)^n(x_0+\sqrt{d}y_0).
\end{equation}

In the former case, (\ref{Pe3}) implies
\[
x+\sqrt{d}y=(u+\sqrt{d}v)(x_1+\sqrt{d}y_1),
\]
so (\ref{Pe1}) holds by setting $(x_0,y_0,n)=(x_1,y_1,1)$.

In the latter case, (\ref{Pe3}) and (\ref{Pe2}) imply
\[
x+\sqrt{d}y=(u+\sqrt{d}v)^{n+1}(x_0+\sqrt{d}y_0),
\]
so (\ref{Pe1}) again holds. 

In either case, we obtain a contradiction to the
the assumption that $x$ is a counterexample.
\end{proof}

\begin{exa}\label{exa:P}
Since the Pell equation 
\[
u^2-17v^2=1
\]
has a solution $(u,v)=(33,8)$,
Lemma~\ref{lem:P1} implies that all solutions of the equation
\begin{equation}\label{Pe4}
x^2-17y^2=64
\end{equation}
can be expressed by those with $x<\sqrt{64(33+1)/2}=8\sqrt{17}$
and powers of $33+8\sqrt{17}$. Observe that the non-negative solutions
$(x,y)$ of (\ref{Pe4}) with $x<8\sqrt{17}$ are
\[
\{(8,0),(9,1),(26,6)\}.
\]
Now Lemma~\ref{lem:P1} implies
\begin{align*}
&\{x+\sqrt{17}y\mid x,y\in\Z,\;x,y>0,\;x^2-17y^2=64\}
\\&\subset\{(33+8\sqrt{17})^n\cdot8\mid n\in\Z\}
\cup
\{(33+8\sqrt{17})^n(9+\sqrt{17})\mid n\in\Z\}
\\&\quad\cup
\{(33+8\sqrt{17})^n(26+6\sqrt{17})\mid n\in\Z\}
\end{align*}
\begin{verbatim}
33^2-17*8^2 eq 1;
[v:v in [1..8]|IsSquare(17*v^2+1)] eq [8];
[x:x in [1..Floor(8*Sqrt(17))]|IsSquare((x^2-64)/17)] 
 eq [8,9,26];
\end{verbatim}
\end{exa}

\begin{lem}\label{lem:P2}
Let $q$ be an even positive integer with $q\geq4$. Then
$\sqrt{(q-1)(17q-1)}$ is an integer if and only if
\begin{align}
q&\in
\{\frac{1}{34}(\tr((2177+528\sqrt{17})^{n}(433+105\sqrt{17}))+18)\mid n\in\Z\}
\label{Pe6}\\&=\{
\ldots,41210,10,26,110890,482812730,\ldots\}.
\notag
\end{align}
\end{lem}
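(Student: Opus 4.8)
The plan is to convert the integrality of $\sqrt{(q-1)(17q-1)}$ into the Pell-type equation (\ref{Pe4}) analysed in Example~\ref{exa:P}, and then to pick out, among its solutions, exactly those corresponding to an even integer $q\geq4$. First I would note that $\sqrt{(q-1)(17q-1)}$ is an integer precisely when $17q^2-18q+1=y^2$ for some $y\in\Z_{\geq0}$; multiplying by $17$ and completing the square turns this into $(17q-9)^2-17y^2=64$, so with $x=17q-9$ we land on equation (\ref{Pe4}). Moreover $q$ is an even integer with $q\geq4$ if and only if $x\equiv25\pmod{34}$ and $x\geq59$: indeed $x\equiv25\pmod{34}$ forces $x\equiv8\pmod{17}$, hence $17\mid x+9$, and then $(x+9)/17$ is even exactly when $x+9\equiv0\pmod{34}$, while $q\geq4\iff x\geq59$. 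So it suffices to find all positive integers $x\equiv25\pmod{34}$ for which (\ref{Pe4}) is solvable, and to set $q=(x+9)/17$.

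For the enumeration I would invoke Example~\ref{exa:P}: every solution $(x,y)$ of (\ref{Pe4}) with $x,y>0$ lies, after possibly replacing it by its conjugate $(x,-y)$ (which changes neither $x$ nor $q$), in one of the three orbits of $8$, $9+\sqrt{17}$, $26+6\sqrt{17}$ under multiplication by $33+8\sqrt{17}$; hence the values of $x$ that occur are exactly the rational parts of $(33+8\sqrt{17})^n\cdot8$, $(33+8\sqrt{17})^n(9+\sqrt{17})$ and $(33+8\sqrt{17})^n(26+6\sqrt{17})$ for $n\in\Z$. Writing $(33+8\sqrt{17})^n=a_n+b_n\sqrt{17}$, the recursions $a_{n+1}=33a_n+136b_n$ and $b_{n+1}=8a_n+33b_n$ give, by induction, $a_n\equiv(-1)^n\pmod{34}$ and $b_n\equiv0\pmod2$. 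Since $136=4\cdot34$, $102=3\cdot34$ and $17b_n\equiv0\pmod{34}$, the rational parts of the three families are $\equiv8a_n\equiv\pm8$, $\equiv26a_n\equiv\pm26$ and $\equiv9a_n+17b_n\equiv\pm9\pmod{34}$ respectively. Among $\{\pm8,\pm26,\pm9\}$ only $-9\equiv25$, and it is realized exactly by the $(9+\sqrt{17})$-family with $n$ odd.

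To finish, using $(33+8\sqrt{17})^2=2177+528\sqrt{17}$ and $(33+8\sqrt{17})(9+\sqrt{17})=433+105\sqrt{17}$ I would rewrite $(33+8\sqrt{17})^{2m+1}(9+\sqrt{17})=(2177+528\sqrt{17})^m(433+105\sqrt{17})$; its rational part equals $\tfrac12\tr\big((2177+528\sqrt{17})^m(433+105\sqrt{17})\big)$, and this trace is positive because $33-8\sqrt{17}$ and $9-\sqrt{17}$ are positive, so $x>0$. A direct check shows the two smallest such $x$ are $161$ (at $m=-1$) and $433$ (at $m=0$), all others being far larger, so the condition $x\geq59$ holds automatically. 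Therefore
$q=(x+9)/17=\tfrac1{34}(2x+18)=\tfrac1{34}\big(\tr((2177+528\sqrt{17})^m(433+105\sqrt{17}))+18\big)$,
and conversely every $q$ of this shape is an even integer $\geq4$ with $(q-1)(17q-1)$ a perfect square; evaluating at $m=-2,-1,0,1,2$ gives $41210,10,26,110890,482812730$, matching (\ref{Pe6}).

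I expect the mod-$34$ bookkeeping of the previous paragraph to be the crux: one must track the three orbits simultaneously and see that exactly the odd-index members of the $9+\sqrt{17}$ family survive, and then recognize those members as the powers $(2177+528\sqrt{17})^m(433+105\sqrt{17})$ whose traces appear in the statement. The only other delicate point is to apply Example~\ref{exa:P} cleanly, in particular to account for base solutions with negative second coordinate via conjugation; this is harmless here since $q$ depends only on $x$ and conjugation fixes $x$.
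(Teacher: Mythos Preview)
Your argument is correct and follows essentially the same route as the paper: substitute $x=17q-9$ to reach $x^2-17y^2=64$, invoke Example~\ref{exa:P} to reduce to the three base orbits, use the residue of $x$ modulo $34$ to single out the odd-index members of the $9+\sqrt{17}$ orbit, and rewrite these via $(33+8\sqrt{17})^2=2177+528\sqrt{17}$. Your treatment is in fact slightly more careful than the paper's in two places---you explicitly handle the conjugate base points $9-\sqrt{17}$, $26-6\sqrt{17}$ (noting that conjugation fixes $x$), and you verify that every resulting $q$ satisfies $q\geq4$---though note a harmless slip in your ``respectively'': the residues $26a_n$ and $9a_n+17b_n$ belong to the $26+6\sqrt{17}$ and $9+\sqrt{17}$ families, the reverse of the order you listed them.
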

\begin{proof}
Setting $x=17q-9$ and $y=\sqrt{(q-1)(17q-1)}$, we have
\begin{align*}
x^2-17y^2&=
(17q-9)^2-17r^2
\nexteq
17^2q^2-2\cdot 17\cdot 9q+81-17(q-1)(17q-1)
\nexteq
64.
\end{align*}
Thus, by Example~\ref{exa:P}, we have
\begin{align*}
x+\sqrt{17}y&\in
\{(33+8\sqrt{17})^n\cdot8\mid n\in\Z\}
\\&\quad\cup
\{(33+8\sqrt{17})^n(9+\sqrt{17})\mid n\in\Z\}
\\&\quad\cup
\{(33+8\sqrt{17})^n(26+6\sqrt{17})\mid n\in\Z\}
\end{align*}
Since $q$ is even, we have $x\equiv-9\pmod{34}$. 
On the other hand, we have
\begin{align*}
(33+8\sqrt{17})^n\cdot8=a+b\sqrt{17}&\implies
a\equiv8(-1)^n\pmod{34},\\
(33+8\sqrt{17})^n(9+\sqrt{17})=a+b\sqrt{17}&\implies
a\equiv9(-1)^{n}\pmod{34},\\
(33+8\sqrt{17})^n(26+6\sqrt{17})=a+b\sqrt{17}&\implies
a\equiv8(-1)^{n+1}\pmod{34},
\end{align*}
Thus
\begin{align}
x+\sqrt{17}y&\in
\{(33+8\sqrt{17})^{2n+1}(9+\sqrt{17})\mid n\in\Z\}
\notag\nexteq
\{(2177+528\sqrt{17})^{n}(433+105\sqrt{17})\mid n\in\Z\}
\label{Pe8}
\end{align}
Thus
\begin{equation}\label{Pe7}
x\in
\{\frac12\tr((2177+528\sqrt{17})^{n}(433+105\sqrt{17}))\mid n\in\Z\}
\end{equation}
Since $q=(x+9)/17$, we obtain (\ref{Pe6}).

Conversely, if (\ref{Pe6}) holds, then (\ref{Pe7}) holds, and hence
(\ref{Pe8}) holds for some integer $y$. Moreover, (\ref{Pe8}) implies
$x^2-17y^2=64$, and hence $y^2=(17q-1)(q-1)$.
\end{proof}

\begin{verbatim}
K<s>:=QuadraticField(17);
(33+8*s)^2 eq 2177+528*s;
(33+8*s)*(9+s) eq 433+105*s;
[1/34*(Trace((2177+528*s)^n*(433+105*s))+18):n in [-2..2]]
 eq [41210,10,26,110890,482812730];
\end{verbatim}

When (\ref{Pe6}) holds, $a_{0,1}$ is rational. This means that
$w_1$ is at most quadratic. The following computation suggests
that $w_1$ is always quadratic.
\begin{verbatim}
K<s>:=QuadraticField(17);
qs:=[1/34*(Trace((2177+528*s)^n*(433+105*s))+18):n in [-1000..1000]];
w1rational:=function(q)
 t,r:=IsSquare((17*q-1)*(q-1));
 if t then
  a01:=(-(q-1)*(q-2)+(q+2)*r)/(2*q*(q+1));
  return IsSquare(a01^2-4);
 else 
  return false;
 end if;
end function;
&and[not D(q):q in qs];
\end{verbatim}

\end{document}